\newtheorem{theorem}{Theorem}
\newtheorem{proposition}[theorem]{Proposition}
\newtheorem{lemma}[theorem]{Lemma}
\newtheorem{corollary}[theorem]{Corollary}
\newtheorem*{corollary*}{Corollary}
\theoremstyle{remark}
\newtheorem*{remark}{Remark}
\newcommand{\latin}[1]{#1} 
\newcommand{\C}{\mathbb{C}}
\newcommand{\R}{\mathbb{R}}
\newcommand{\N}{\mathbb{N}}
\newcommand{\Z}{\mathbb{Z}}
\newcommand{\pb}{\overline{p}} 
\newcommand{\Nt}{\widetilde{N}} 
\newcommand{\sigmat}{\widetilde{\sigma}}
\newcommand{\qh}{\hat{q}} 
\newcommand{\rh}{\hat{r}}
\newcommand{\sh}{\hat{s}}
\newcommand{\Ph}{\hat{\mathbb P}}
\newcommand{\equivalent}[1]{\overset{}{\underset{#1}{\sim}}}
\newcommand{\eps}{\varepsilon}
\newcommand*{\pc}[2][]{#1{p}_\mathrm{c}^{#2}}
\newcommand*{\future}[1][0]{(#1,\infty)}
\renewcommand*{\P}{\mathbb{P}}
\newcommand*{\E}{\mathbb{E}}
\newcommand*{\prob}[1]{\P(#1)}
\newcommand*{\onab}[3][1]{\{#3\}_{[x_{#1},x_{#2}]}}
\newcommand*{\onfuture}[2][0]{\{#2\}_{\future[#1]}}
\newcommand{\from}{\mathbin{\leftarrow}}
\newcommand{\ffrom}{\mathbin{\accentset{\!1}\leftarrow}} 
\newcommand{\tto}{\mathbin{\rightarrow}}
\newcommand{\fto}{\mathbin{\accentset{\!1}\rightarrow}} 
\newcommand{\collide}{\mathbin{\rightarrow\leftarrow}}
\newcommand{\triple}{\tto\!\bullet\!\from\bullet}
\newcommand{\nfrom}{\mathbin{\centernot\leftarrow}}
\newcommand{\nto}{\mathbin{\centernot\rightarrow}}
\newcommand{\go}{\accentset{\rightarrow}{\bullet}}
\newcommand{\come}{\accentset{\leftarrow}{\bullet}}
\newcommand{\stay}{\dot{\bullet}}
\newcommand{\Ngo}{\accentset{\rightarrow}{N}}
\newcommand{\Ncome}{\accentset{\leftarrow}{N}}
\newcommand{\Nstay}{\dot{N}}
\newcommand{\VARs}{r}
\newcommand{\VARr}{s}
\newcommand*{\bbr}[1]{\Bigl(#1\Bigr)}
\DeclareMathOperator{\rev}{rev}
\newcommand{\Fr}{\mathcal F}
\newcommand{\indic}{{\bf 1}}
\newcommand{\defeq}{\mathrel{\mathop:}=}
\newcommand{\st}{\,:\,} 
\newcommand{\s}{\mid} 
\DeclareMathOperator*{\limup}{\lim\!\!\uparrow}
\title{Three-speed ballistic annihilation: Phase transition and universality}
\author{John Haslegrave\\Mathematics Institute, University of Warwick, Coventry, UK
\and
Vladas Sidoravicius\\Courant Institute of Mathematical Sciences, New York\\
NYU-ECNU Institute of Mathematical Sciences at NYU Shanghai
\and
Laurent Tournier\\LAGA, Universit\'e Sorbonne Paris Nord,\\ CNRS, UMR 7539, 93430 Villetaneuse, France.
}
\begin{document}

\maketitle

\begin{abstract}
We consider ballistic annihilation, a model for chemical reactions first introduced in the 1980's physics literature. In this particle system, initial locations are given by a renewal process on the line, motions are ballistic --- i.e.\ each particle is assigned a constant velocity, chosen independently and with identical distribution --- and collisions between pairs of particles result in mutual annihilation. 

We focus on the case when the velocities are symmetrically distributed among three values, i.e.\ particles either remain static (with given probability~$p$) or move at constant velocity uniformly chosen among $\pm1$. We establish that this model goes through a phase transition at $p_c=1/4$ between a subcritical regime where every particle eventually annihilates, and a supercritical regime where a positive density of static particles is never hit, confirming 1990s predictions of Droz et al.~\cite{droz1995ballistic} for the particular case of a Poisson process. Our result encompasses cases where triple collisions can happen; these are resolved by annihilation of one static and one randomly chosen moving particle. 

Our arguments, of combinatorial nature, show that, although the model is not completely solvable, certain large scale features can be explicitly computed, and are universal, i.e. insensitive to the distribution of the initial point process. In particular, in the critical and subcritical regimes, the asymptotics of the time decay of the densities of each type of particle is universal (among exponentially integrable interdistance distributions) and, in the supercritical regime, the distribution of the ``skyline'' process, i.e.\ the process restricted to the \emph{last} particles to ever visit a location, has a universal description.  

We also prove that the alternative model introduced in~\cite{junge2}, where triple collisions resolve by mutual annihilation of the three particles involved, does not share the same universality as our model, and find numerical bounds on its critical probability. 

\noindent\textbf{Keywords:} ballistic annihilation; phase transition; interacting particle system.

\noindent\textbf{AMS MSC 2010:} 60K35.
\end{abstract}

\maketitle

\begin{flushright}
\textit{To the dear memory of Vladas Sidoravicius, \\
who untimely passed away during the final preparation of this paper.}
\end{flushright}

\section{Introduction}

Originating in an effort to understand the kinetics of chemical reactions, several models of annihilating particle systems were introduced in the 1980's and 1990's in statistical physics. While most of the interest focused on diffusive motions, i.e.\ driven by random walks or Brownian motions (see for instance the celebrated results of Bramson and Lebowitz~\cite{bramson1991asymptotic} regarding two-type annihilation $A+B\to\emptyset$ on $\Z^d$, or by Arratia~\cite{arratia1983site} on one-type annihilation $A+A\to\emptyset$ on $\Z^d$), it was also observed, first by Elskens and Frisch~\cite{EF85} in a particular case, and later more systematically by Ben-Naim, Redner and Leyvraz~\cite{ben1993decay}, that the case of \textit{ballistic} motions (i.e.\ with constant velocity and direction) displayed very different behaviors and was particularly challenging to analyze. 
In this so-called ballistic annihilation process, particles start from the points of a homogeneous Poisson point process on the real line, and move at constant velocities that are initially chosen at random, independently and according to the same distribution; when two particles collide, they annihilate each other immediately. 

The distribution of velocities obviously plays a key role. The case when velocities take only two values, for instance $\pm1$, has attracted substantial interest, as it is not only physically relevant (cf.~\cite{EF85,krug1988universality}) but also combinatorially very tractable due to a reduction to random walks, and already displays interesting phenomena. For instance, although the question of survival of a given particle is extremely simple in this case, the global behavior of the cloud of surviving particles at large times is nontrivial: remaining particles of $+1$ and $-1$ velocities tend to form homogeneous aggregates whose asymptotic distribution can be computed. See, for instance~\cite{belitsky1995ballistic},~\cite{ermakov1998some}, until the recent~\cite{kovchegov2017dynamical}. 

At the other end of the spectrum, little is known on the case of continuous velocities. In physics literature, several mean-field analyses or computer simulations have been conducted to understand the decay of the concentration of particles~\cite{ben1993decay,trizac2002kinetics}. However, very few results are known rigorously. Some general observations were given in~\cite{sidoravicius-tournier}, especially on the symmetric case. A very intriguing combinatorial feature was also proved by Broutin and Marckert~\cite{BM19} on finite systems, namely that the law of the number of particles that survive forever, in the system restricted to $n$ particles starting at consecutive locations of a renewal process, does \textit{not} depend on the distribution of either velocities or interdistances. As explained in~\cite{BM19}, this property cannot be understood by a simple symmetry argument; accordingly, the proof of this inconspicuous property is surprisingly intricate, which suggests that this model has combinatorial interest beyond physics applications or sheer curiosity. In the mathematical community, interest in the problem was recently revived by Kleber and Wilson's popularisation of a puzzle~\cite{ibm} about a closely related ``bullet problem'', in which particles with independent uniformly distributed random speeds in $[0,1]$ leave the origin at integer times and are annihilated by collisions. In this setup, it is conjectured that there is a critical speed $s_{\mathrm{c}}\in(0,1)$ such that the first bullet survives with positive probability if and only if it flies faster than $s_c$. We refer to~\cite{junge} for more details and a partial answer in the case of discrete speed distributions. As explained in~\cite{junge,sidoravicius-tournier}, interchanging time and space in the bullet problem yields a one-sided instance of ballistic annihilation with a different distribution of speeds. 

The scope of the present paper lies within the intermediary case. More specifically, we show that arguably the simplest case beyond the case of two velocities, namely the case when velocities have a symmetric distribution on the set $\{-1,0,+1\}$, already goes through a phase transition that (contrary to the two-velocities case) is neither explained by a trivial symmetry nor a monotonicity. Let $p\in[0,1]$ denote the probability of a null velocity, in other words we assume that each particle independently is either static (with probability $p$), or moves at unit velocity either left or right (each with probability $\frac{1-p}2$). Krapivsky, Redner and Leyvraz~\cite{krapivsky1995ballistic}, who first considered this case in 1995, postulated the existence of a critical probability $\pc{}$, such that for $p<\pc{}$ every particle is eventually annihilated, whereas for $p>\pc{}$ a positive density of static particles survive forever. Based on simulations and a heuristic derived from considering the rate at which different types of collisions might be expected to occur, they conjectured that $\pc{}=1/4$. This conjecture was simultaneously strongly supported by intricate exact computations of Droz, Rey, Frachebourg and Piasecki \cite{droz1995ballistic} resolving related differential equations and also providing precise asymptotics for the decay of the densities of static and moving particles. However, these results are not entirely rigorous, and provide very little intuitive understanding of the process. Our first main result is a confirmation of the fact that $\pc{}=1/4$ and of an exact formula, first predicted by~\cite{droz1995ballistic}, for the asymptotic density of surviving static particles. As a consequence, this establishes that the survival probability of a given static particle is a monotonic and continuous function of $p$; neither of these properties was previously known. It is in particular important to underline that no monotonicity holds with respect to the introduction of more static particles into a configuration. The core of the proof is of combinatorial nature, relying on several symmetries in the model, and involves only simple computations, although a finer approximation argument is necessary to ensure some \latin{a priori} regularity and to properly conclude. 

While the previous works~\cite{krapivsky1995ballistic,droz1995ballistic} in the physics literature focused on the natural case of a Poisson process as initial distribution of locations, our proof remarkably holds irrespective of the distribution of the initial distances between particles, as long as they are i.i.d. Let us emphasize that these distances have a crucial role in the evolution of the system at ``microscopic scale''. In particular, general interdistances can produce occurrences of triple collisions (one static and two moving particles); in this case, which (to our knowledge) hadn't appeared in the physics models, we decide the outcome at random: with equal probability, both the static and one of the two moving particles annihilate, while the other moving particle survives (see Figure~\ref{fig:configurations}). With this definition, the system thus shows universal behavior at ``macroscopic scale'', i.e.\ belongs to the same phase for any distribution of interdistance. More deeply, our second main result (Theorem~\ref{thm:universality}) outlines a stronger, hitherto unsuspected, universality property that is reminiscent of the results of~\cite{BM19}. This property furthermore enables us to derive explicitly (Theorem~\ref{thm:asympt_density}) the asymptotic decay of densities of static and moving particles in the critical and subcritical regime, universally among i.i.d.\ and exponentially integrable interdistances. In the supercritical regimes, we show that densities converge exponentially fast to their limit, however the exact order depends on the distribution of interdistances and therefore does not follow from our methods, except for the particular case of constant interdistances. We also identify a fully universal object in the supercritical regime, here called the \emph{skyline process}: it consists intuitively in the distribution of the ``top shapes'' in the space-time representation (cf.~Figure~\ref{fig:skyline}) or, in terms of the process, in the set of indices and speeds of the last particles to ever visit a part of the environment, see Proposition~\ref{pro:skyline}. 

Some rigorous results were already known about this three-speed ballistic annihilation model. 
First, a simple argument shows that, for sufficiently large values of $p$, static particles have positive chance to survive: if a static particle is to be annihilated, then there must be an interval containing that particle which contains at least as many moving particles as static particles at time $0$, but for $p>1/2$ there is a positive probability that no such interval exists. Sidoravicius and Tournier~\cite{sidoravicius-tournier} proved that static particles survive with positive probability even when $p\ge1/3$; an alternative argument was also provided by Dygert et al.~\cite{junge}. Furthermore, a technique to numerically improve this bound was proposed by Burdinski, Gupta and Junge~\cite{junge2}. Despite these results, there have been no corresponding lower bounds, and proving that almost sure annihilation occurs at any small $p$ was therefore the central open question. Our results not only answer this question, but show that almost sure annihilation occurs if, and only if, $p\le1/4$.

We may also mention that our techniques adapt to the alternative version of the model recently introduced by Burdinski, Gupta and Junge \cite{junge2}, in which interdistances are constant and where, at triple collisions, all three particles are annihilated (cf.\ Figure~\ref{fig:configurations}, bottom). We in particular prove survival when $p\ge0.2406$ (improving over $p\ge 0.2870$ in~\cite{junge2}), implying that the phase transition of this variant happens strictly below $1/4$, confirming a conjecture of~\cite{junge2}. Furthermore, we also provide the first upper bound for the annihilation regime, although our techniques don't suffice in this case to establish the existence of a critical probability. One might intuitively suspect that the critical probability, if it exists, for this model would be strictly smaller than that for our model with constant interdistances. The rationale is that if the two processes are coupled then the latter is equivalent to injecting an additional moving particle into the former whenever a triple collision occurs, and these extra moving particles should make it easier to destroy stationary particles. However, the lack of monotonicity in the process means that it would be difficult to make this intuition rigorous. Furthermore, it would not give any insight into its relationship with the normal model with continuous interdistances, were it not for universality properties of the latter. The lack of universality in this alternative version when extended to generic discrete distributions also makes it difficult to predict its critical probability for constant distances.

Let us finally note that, since the prepublication of this paper, the robustness of our approach, more specifically of Section~\ref{sec:mainproof}, was illustrated in several directions by Junge, Lyu and co-authors in~\cite{junge2} and~\cite{junge3}. Both papers still consider the case of three velocities. The first~\cite{junge2} shows that the techniques, although insuffisant to exactly locate the phase transition in the three-speed \emph{asymmetric} case, can be adapted to get nontrivial bounds on the threshold and regularity statements on the survival probability. As to the second~\cite{junge3}, it investigates a generalization of the symmetric case where collisions may give birth to new particles in a random, symmetric way (akin to our definition of the outcome of triple collisions); although becoming too computationally involved to deal with general parameters, the techniques are again amenable to adaptation, enabling to get exact formulas for transition threshold in many cases (demonstrating in particular that these extensions do \emph{not} change phase at $p=1/4$). Regarding the model under focus in the present paper, we investigated further its remarkable universality property in a recent work~\cite{HT}: without being able to provide a satisfying explanation for it, we still identify other interesting instances of it in finite systems where it can be proved by alternative means and imply certain unexpected independence properties of possible interest to later studies.

\section{Definitions, notations and results}

Let us first define the model. In contrast with the above introduction, we will primarily restrict to particles starting from $(0,\infty)$ since our main result is best stated in that context.

Let $p\in[0,1]$, and let $m$ be a probability measure on $(0,\infty)$. 

On a probability space $(\Omega,\mathcal F,\P)$, let $(x_n)_{n\ge1}$ be a renewal process on $(0,\infty)$ whose interdistances are distributed according to~$m$, i.e.\ $x_1,x_2-x_1,x_3-x_2,\ldots$ are independent $m$-distributed random variables. Let also $(v_n)_{n\ge1}$ be a sequence of independent random variables on $\{-1,0,+1\}$, with same distribution given by
\[\P(v_n=0)=p,\quad \text{ and }\quad\P(v_n=-1)=\P(v_n=+1)=\frac{1-p}2,\]
and independent of $(x_n)_{n\ge1}$. Finally, let $(s_n)_{n\ge1}$ be a sequence of independent Rademacher random variables, 
which are also independent of $(x_n)_{n\ge1}$ and $(v_n)_{n\ge1}$

We interpret $x_1,x_2,\ldots$ as initial locations of particles on the real line $\R$, $v_1,v_2,\ldots$ as their initial velocities, and $s_1,s_2,\ldots$ as their ``spins''.
For any $i\ge1$, the spin $s_i$ will only play part in the process if $v_i=0$, in which case it will be used to resolve a potential \emph{triple collision} at $x_i$. In particular, spins can be ignored by the reader whose interest is in continuous interdistances. 

In notations, the particles will conveniently be referred to as $\bullet_1,\bullet_2,\ldots$, and particles with velocity $0$ 
will sometimes be called \emph{static} particles. 

Given the initial configuration $(x_n,v_n)_{n\ge1}$, the evolution of the process of particles may be informally described as follows (see also Figure~\ref{fig:configurations}): at time $0$, each particle $\bullet_n$ (for $n\in\N=\{1,2,\ldots\}$) starts at $x_n$, and then moves at constant velocity $v_n$ until, if ever, it collides with another particle. Collisions resolve as follows: where exactly two particles collide, both are annihilated; where three particles, necessarily of different speeds, collide, two are annihilated, and either the right-moving or left-moving particle survives (i.e.\ continues its motion unperturbed), according to the spin of the static particle involved. Note that each spin affects the resolution of at most one triple collision. Annihilated particles are considered removed from the system and do not take part in any later collision. 

Finally, we shall occasionally refer to the \emph{full-line process}, i.e., the corresponding process in which particles are released from $\{-x'_n\st n\ge1\}\cup\{0\}\cup\{x_n\st n\ge1\}$, where $(x'_n)_{n\ge1}$ is an independent copy of $(x_n)_{n\ge1}$. Accordingly, velocities $(v_n)_{n\in\Z}$ and spins $(s_n)_{n\in\Z}$ are independent, and distributed as above. 
In this case, the probability will be denoted $\P_\R$. 

\subsection{Formal definition of annihilations}
Let us give a proper definition of the trajectories of particles, which amounts to defining the annihilation times. This will in particular provide a justification for the almost sure existence of the model. Let us mention that the vocabulary and notations introduced here will not be used elsewhere in the paper. 

Let us define the \emph{virtual trajectory} of $\bullet_i$ as its trajectory in absence of any other particle, i.e.\ $t\in[0,\infty)\mapsto x_i+v_i t$. For $i<j$, let us say that $\bullet_i$ and $\bullet_j$ \emph{virtually collide} if their virtual trajectories intersect (i.e.\ if there is $t\ge0$ such that $x_i+v_it=x_j+v_jt$, which is equivalent to $v_i>v_j$); in this case, collision happens at time $t_{ij}=\frac{v_j-v_i}{x_i-x_j}$. Let us set $t_{ij}=+\infty$ when $v_i\leq v_j$, and $t_{ji}=t_{ij}$. 
Let $i<j$ be such that $\bullet_i$ and $\bullet_j$ virtually collide.
We define $I_{ij}$ to be the random interval of $\R$ of all points from where a particle, with some velocity in $\{-1,0,+1\}$, could start and virtually hit either $\bullet_i$ or $\bullet_j$ at or before time $t_{ij}$. The interval $I_{ij}$ is bounded because velocities are bounded, and more specifically, $I_{ij}$ is the interval $[x_i,x_j]$ if $(v_i,v_j)=(+1,-1)$, the interval $[x_i,x_j+(x_j-x_i)]$ if $(v_i,v_j)=(+1,0)$, and the interval $[x_i-(x_j-x_i),x_j]$ if $(v_i,v_j)=(0,-1)$. 
Denote by $N_{ij}$ the number of pairs $(k,k')\ne(i,j)$ such that $k<k'$, $x_k,x_{k'}\in I_{ij}$ and $t_{kk'}\leq t_{ij}$.
Almost surely $N_{ij}<\infty$, because $\{x_n\st n\ge1\}$ has no accumulation point. 

For all positive integers $i<j$, the property that $\bullet_i$ and $\bullet_j$ mutually annihilate, denoted by $\bullet_i\sim \bullet_j$, is defined in the following recursive manner: $\bullet_i\sim \bullet_j$ if $\bullet_i$ and $\bullet_j$ virtually collide (i.e.\ $v_i>v_j$), and 
\begin{itemize} 
	\item either $N_{ij}=0$,
	\item or for every particle $\bullet_k$ that virtually collides with $\bullet_i$ or $\bullet_j$ at or before time~$t_{ij}$, there is a particle $\bullet_{k'}$ such that $t_{kk'}<\min(t_{ik},t_{jk})$, and $\bullet_k\sim \bullet_{k'}$,
	\item or there is a particle $\bullet_h$ that virtually collides with $\bullet_i$ and $\bullet_j$ at time~$t_{ij}$, such that either $v_h=s_i=+1$ (implying $(v_i,v_j)=(0,-1)$) or $v_h=s_j=-1$ (implying $(v_i,v_j)=(+1,0)$), and for every particle $\bullet_k$ that virtually collides with $\bullet_i$ or $\bullet_j$ strictly before time~$t_{ij}$, there is a particle $\bullet_{k'}$ such that $t_{kk'}<\min(t_{ik},t_{jk})$, and $\bullet_k\sim \bullet_k'$.
\end{itemize}
If $\bullet_k$ and $\bullet_{k'}$ are such that $t_{kk'}<\min(t_{ik},t_{jk})\le t_{ij}<\infty$, then $x_k, x_{k'}\in I_{ij}$, $I_{kk'}\subset I_{ij}$, and most importantly $N_{kk'}<N_{ij}$, which shows that the above defining procedure eventually terminates. 

\begin{figure}\label{fig:configurations}
\begin{center}
\includegraphics[scale=0.5, page=3, trim=0in 0in -0.92in 0in]{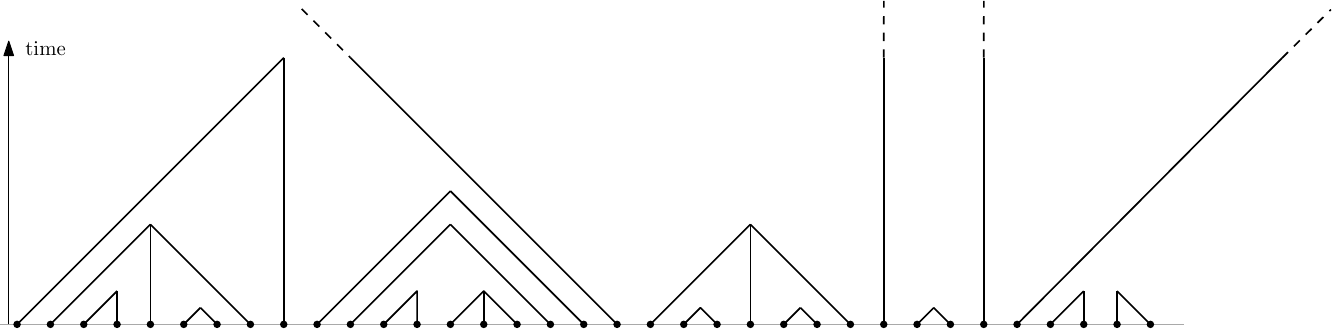}

\includegraphics[scale=0.5, page=2]{configurations.pdf}

\includegraphics[scale=0.5, page=1, trim=0in 0in -0.05in 0in]{configurations.pdf}
\end{center}
\caption{Samples of the evolution of the system from a finite configuration, on a time-space diagram. The three figures share the same initial velocities. The top and middle figures correspond to our main model, with an atomless distribution $m$ of interdistances (top) or constant interdistances, i.e. $m=\delta_1$ (middle); the spins used to resolve triple collisions are indicated below the axis. The bottom figure represents, for constant interdistances, an alternative process where triple collisions resolve by mutual annihilation; this model is considered in Section~\ref{sec:discrete}.}
\end{figure}

\subsection{Notation}

Let us introduce convenient abbreviations to describe events related to the model. We use $\bullet_i$ (where $i\in\N$) for the $i$th particle, $\bullet$ for an arbitrary particle, and superscripts $\go$, $\stay$ and $\come$ to indicate that those particles have velocity $+1$, $0$ and $-1$ respectively. We write $\bullet_i\sim\bullet_j$ (for $i<j$ in $\N$) to indicate mutual annihilation between $\bullet_i$ and $\bullet_j$, however for readability reasons this notation will usually be replaced by a more precise series of notations: 
if $\bullet_i\sim\bullet_j$, we write
$\bullet_i\collide\bullet_j$, or redundantly $\go_i\collide\come_j$, when $v_i=+1$ and $v_j=-1$,  $\bullet_i\tto\bullet_j$ when $v_i=+1$ and $v_j=0$, and $\bullet_i\from\bullet_j$ symmetrically. Note that in all cases this notation excludes the case where $\bullet_i$ and $\bullet_j$ take part in a triple collision but one of them survives. Additionally, we write $x\from\bullet_i$ (for $i\in\N$ and $x\in\R$) to indicate that $\bullet_i$ crosses location~$x$ from the right (i.e.\ $v_i=-1$, $x<x_i$, and $\bullet_i$ is not annihilated when or before it reaches $x$), and $x\ffrom\bullet_i$ if $\bullet_i$ is \emph{first} to cross location~$x$ from the right. Symmetrically, we write $\bullet_i\tto x$ and $\bullet_i\fto x$. 

For any interval $I\subset(0,\infty)$, and any condition $C$ on particles, we denote by $(C)_I$ the same condition for the process restricted to the set $I$, i.e.\ where all particles outside $I$ are removed at time $0$ (however, the indices of remaining particles are unaffected by the restriction). For short, we write $\{C\}_I$ instead of $\{(C)_I\}$, denoting the event that the condition $(C)_I$ is realized. 


\subsection{Results}

Our main results apply to the model as defined above, i.e.\ where triple collisions are resolved by the annihilation of the static particle and of one randomly chosen moving particle, while remarks on the alternative discretized model of \cite{junge2} will be deferred to a later section.

\begin{theorem}\label{thm:main}
The model undergoes a phase transition at $\pc{}=1/4$. More precisely, the probability that $0$ is reached by a particle on $(0,\infty)$ is given, for all $p\in[0,1]$, by
\[q\defeq\P(0\from\bullet)=\begin{cases} 1 & \text{if $p\le1/4$}\\\frac1{\sqrt p}-1&\text{if $p>1/4$.}\end{cases}\]
\end{theorem}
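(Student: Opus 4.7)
The plan is to derive, via a renewal-style decomposition combined with the $v\mapsto-v$ symmetry of the model, a fixed-point equation satisfied by $q=q(p)\defeq\P(0\from\bullet)$, and then to distinguish the two regimes on the basis of which root applies. First I would introduce a small collection of auxiliary probabilities---for instance the probability that a typical particle of a given type at a reference location is hit from a given side, and conditional variants of $q$ given the velocity (and, where relevant, the spin) of the nearest particle. The renewal property makes the process restricted to $(x_1,\infty)$ a translated independent copy of the whole system, so the probability that some left-mover from $(x_1,\infty)$ reaches $x_1$ is again $q$. The symmetry $v\mapsto-v$ identifies many of these auxiliaries, leaving only a small system of equations to solve.

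Next I would condition on the velocity of $\bullet_1$. For $v_1=-1$ the particle $\bullet_1$ trivially reaches $0$ (nothing in $(x_1,\infty)$ can catch a left-mover moving at unit speed), contributing $\frac{1-p}{2}$ to $q$. For $v_1\in\{0,+1\}$ the particle $\bullet_1$ is an obstacle, and $\{0\from\bullet\}$ requires a cascade that annihilates $\bullet_1$ and then allows a subsequent left-mover to pass through $x_1$. Using the renewal structure on $(x_1,\infty)$ and, in the case $v_1=0$, carefully handling a possible triple collision at $x_1$ (resolved by the spin $s_1$), each conditional probability can be written in terms of $q$ and the reduced auxiliaries. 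Collecting contributions should lead to a quadratic relation in $q$ whose relevant root is $q=\frac{1}{\sqrt p}-1$; this value lies in $[0,1)$ precisely when $p>1/4$, which settles the supercritical regime.

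For the subcritical regime $p\le 1/4$ the same quadratic would formally force $q\ge 1$, hence $q=1$ given the trivial bound $q\le 1$. However, the derivation implicitly needs $q<1$ somewhere (for instance to justify normalising the geometric series that encodes iterated cascades), so a separate regularity argument is needed to conclude $q\equiv 1$ throughout $[0,1/4]$. The plan is to establish continuity of $q$ in $p$ by approximating the infinite system by finite truncated versions, whose analogous hitting probabilities are monotone under natural couplings and can be controlled in the limit; this is the ``finer approximation argument'' alluded to in the introduction. The main obstacle is precisely this regularity step: as the authors warn, introducing additional static particles does not obviously increase the survival probability of any given static, so no direct monotonicity of $q(p)$ is available, and the approximation must extract regularity from the combinatorial structure of the problem rather than from a straightforward coupling.
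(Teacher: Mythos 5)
Your broad structure (a fixed-point equation from a renewal decomposition and symmetry, followed by a regularity step) matches the paper's, but there are two genuine gaps, one in each half.

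\textbf{Where the regularity step is actually needed.} You place the burden on the subcritical side, arguing that the algebraic derivation ``implicitly needs $q<1$ somewhere'' and therefore a separate argument is required to pin down $q\equiv 1$ for $p\le 1/4$. This is backwards. The paper's algebraic identities (Lemmas~\ref{flip} and \ref{lem:identity}) hold unconditionally for $p\in(0,1)$ and do not presuppose $q<1$; no geometric series needs normalising. Combining them gives $(1-q)\bigl(1-p(1+q)^2\bigr)=0$, so $q\in\{1,\ \frac1{\sqrt p}-1\}$. When $p\le1/4$ the second root exceeds $1$, so $q=1$ follows immediately from $q\le1$---no extra argument. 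The part that does \emph{not} follow from algebra alone is the supercritical regime: for $p>1/4$ the dichotomy is compatible with either $q=1$ or $q=\frac1{\sqrt p}-1$, and the claim ``this value lies in $[0,1)$ precisely when $p>1/4$, which settles the supercritical regime'' is exactly the non-sequitur the paper must work to avoid. The regularity argument (Propositions~\ref{prop:connectivity} and \ref{prop:characterization}) is there to rule out $q=1$ for $p\in(1/4,1)$. It proceeds not by proving continuity of $q$ in $p$ directly, but by showing both $\{p:\theta(p)=0\}$ and $\{p:\theta(p)>0\}$ are open in $(1/4,1)$, using two different finitary characterizations whose defining quantities are polynomial in $p$; connectivity and the known bound $\theta(p)>0$ for $p\ge1/3$ then finish. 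Your appeal to ``monotone under natural couplings'' in this step is in tension with your own (correct) warning that monotonicity fails in this model; the openness of the supercritical set requires the non-trivial drift characterization via $\E[N_k]>0$, not a coupling.

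\textbf{The missing symmetry device.} You invoke the $v\mapsto-v$ symmetry, but the auxiliary probabilities $\P\bigl((0\from\bullet)\wedge(\go_1\to\stay)\bigr)$ and $\P\bigl((0\nfrom\bullet)\wedge(\go_1\to\stay)\bigr)$ do not close into a small system by conditioning and renewal alone. The paper's key tool is the space-reversal $\rev_k$ of the finite interval $[x_1,x_k]$ (which flips both velocities and spins), shown to be measure-preserving for any interdistance law $m$. It is $\rev_k$ that converts the awkward event ``the right-mover $\bullet_1$ first hits a static particle'' into the tractable ``the static $\bullet_1$ is first hit from the right,'' and it is also what makes the spin convention at triple collisions cancel cleanly (giving the $\frac12$ factor in $r=\frac12 pq^2$). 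Without identifying this reversal, the ``small system of equations'' you describe does not close, so the quadratic relation you aim for is not actually reachable by the route you sketch.
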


This result has the following immediate interpretation in the full-line process: 
\begin{itemize}
	\item if $p\le1/4$, then a.s.\ all static  particles (i.e., with velocity $0$) are annihilated; 
	\item if $p>1/4$, then a.s.\ infinitely many static particles survive. More precisely, due to shift invariance, each static particle has same positive probability  to survive forever, which is given by \[\theta(p)=(1-q)^2=\Big(2-\frac1{\sqrt p}\Big)^2\] (the first equality follows by left-right symmetry and independence: if the particle at $0$ is static, its survival on the full line means that no particle crosses $0$ from either left or right), hence by ergodicity there is a density $p\theta(p)>0$, among particles, of surviving static particles. 
\end{itemize}

One can also see (cf.\ for instance~\cite{sidoravicius-tournier}) that, for $p\le1/4$, a.s.\ infinitely many particles cross every $x\in\R$ from both left and right, but only finitely many do so for $p>1/4$. 

\begin{figure}
\begin{tabular}{m{6cm}m{6cm}}
\includegraphics[width=5cm]{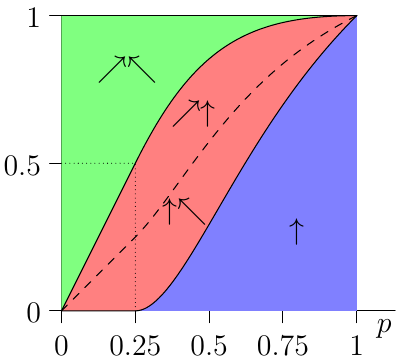}
&
\includegraphics[width=5cm]{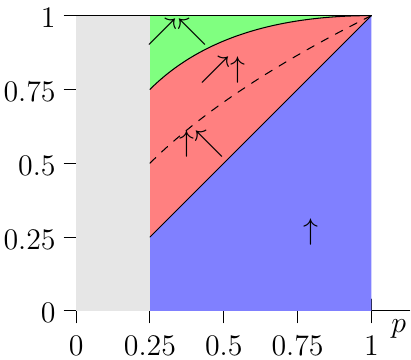}
\end{tabular}
\caption{(Left) Overall densities of each type of shape. (Right) Densities of each type of shape in the skyline process (see before Proposition~\ref{pro:skyline}).}\label{fig:shapes}
\end{figure}

While a Poisson point process is the most natural initial distribution for the particles, and was indeed the one considered in physics literature, the only property of the process $(x_n)_{n\ge1}$ of starting locations that we use is the fact that the intervals between particles are i.i.d. 
For this class of models, Theorem~\ref{thm:main} shows that $\theta(p)$ is universal. As a consequence, the relative frequencies of the possible ``shapes'', that is of surviving static particles, annihilations between static and moving particles, and annihilations between two moving particles, are also universal, and proportional to $p\theta(p)$, $p-p\theta(p)$ and $\frac12(1-2p+p\theta(p))$ respectively, as shown in Figure~\ref{fig:shapes} (Left).

Surprisingly, it turns out that a stronger form of universality holds, which is very reminiscent of the main result of~\cite{BM19}. Let $A$ be the random variable given by 
\[A\defeq\min\{k\ge1:0\from\bullet_k\}\] 
on the event $\{0\from\bullet\}$, and $A=\infty$ otherwise.

\begin{theorem}\label{thm:universality}
The distribution of $A$ does not depend on the distribution $m$ of interdistances. 
Furthermore, its generating series $f:x\mapsto f(x)=\E[x^A\indic_{\{A<\infty\}}]$ 
satisfies, for all $x\in[-1,1]$, 
\begin{equation}\label{eq:generating_series}
px f(x)^4-(1+2p)xf(x)^2+2f(x)-(1-p)x=0.
\end{equation}
\end{theorem}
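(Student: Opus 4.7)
The plan is to derive the quartic identity together with the universality in $m$ by a first-step decomposition on the velocity $v_1$ of the leftmost particle. The key observation is that each case reduces to fresh independent copies of the process, so that the contribution to $f(x)$ depends only on $p$, $x$, and $f(x)$ itself; universality is then an automatic consequence.

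The case $v_1=-1$ is immediate: $A=1$, contributing $\tfrac{1-p}{2}x$. The case $v_1=0$ is the cleanest nontrivial case and sets the pattern. Since $\bullet_1$ is the leftmost particle on $(0,\infty)$, no triple collision at $x_1$ is possible, so $\bullet_1$ can only be annihilated via $\bullet_1\from\bullet_k$ for some left-mover $\bullet_k$. This $\bullet_k$ is exactly the first particle to cross $x_1$ from the right in the sub-process $(\bullet_j)_{j\ge2}$ (left-movers on $(x_1,\infty)$ cannot be caught from behind, and the static $\bullet_1$ does not affect the sub-process dynamics until it is hit). By translation-invariance and the renewal property, the index $k-1$ of this first crosser has the same distribution as $A$; moreover the event is measurable with respect to $(\bullet_j)_{2\le j\le k}$, so the continuation $(\bullet_j)_{j>k}$ is a fresh independent copy of the process whose first crosser of $x_k$ reaches $0$ unobstructed. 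Writing $A=k+A'$ with $A'\sim A$ independent yields
\[
\E\bigl[x^A\indic_{\{v_1=0,\,A<\infty\}}\bigr] = p\cdot xf(x)\cdot f(x) = pxf(x)^2.
\]

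The case $v_1=+1$ is the delicate one: $\bullet_1$ now enters the sub-process region and can disrupt its dynamics, either through a triple collision (when $m$ has atoms) or by preempting an interior two-particle collision at a static particle (even when $m$ is atomless). Classifying the ultimate fate of $\bullet_1$ as $\bullet_1\collide\bullet_K$, $\bullet_1\to\bullet_K$, or a triple collision $(\bullet_1,\bullet_I,\bullet_J)$ resolved by the spin $s_I$, and iterating the fresh-sub-process argument on the segments between successive anchor particles, one obtains a sum of contributions. A careful inclusion--exclusion to avoid double-counting consolidates these into
\[
\E\bigl[x^A\indic_{\{v_1=+1,\,A<\infty\}}\bigr] = \tfrac12\,xf(x)^2 - \tfrac{p}{2}\,xf(x)^4,
\]
where the $xf(x)^2$ captures the binary split of the configuration around $\bullet_1$'s partner and the signed quartic term encodes the ternary structure associated to a triple-collision anchor. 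Summing the three cases and multiplying by~$2$ yields the claimed identity
\[
pxf(x)^4-(1+2p)xf(x)^2+2f(x)-(1-p)x=0.
\]

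The principal obstacle is verifying, in the $v_1=+1$ case, that the quartic correction has the universal coefficient $-\tfrac{p}{2}$ regardless of whether $m$ has atoms: for atomic $m$ the term arises directly from the independent sub-blocks and the two spin outcomes around a triple, while for atomless $m$ the same term must emerge from the purely combinatorial cancellation between ``intended'' two-particle collisions and the ``preempted'' ones $\bullet_1\to\bullet_K$ at interior statics. Reconciling these two regimes into a single universal identity is the combinatorial heart of the proof, and is precisely what underpins the $m$-independence of the distribution of $A$.
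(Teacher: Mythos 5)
Your decomposition by the velocity $v_1$ of the leftmost particle is exactly the starting point of the paper's proof, and your treatments of the cases $v_1=-1$ (contributing $\tfrac{1-p}{2}x$) and $v_1=0$ (contributing $pxf(x)^2$ via two independent fresh copies of the process) are correct and match the paper. However, for $v_1=+1$ you simply assert the contribution $\tfrac12 xf(x)^2-\tfrac{p}{2}xf(x)^4$ and state that it follows from ``a careful inclusion--exclusion'' without carrying it out; you then explicitly label this derivation ``the principal obstacle'' and ``the combinatorial heart of the proof.'' That is precisely the part that needs a proof, so as it stands there is a genuine gap.

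The missing ingredient is a tool to control the two subevents $\{\go_1\to\stay\}$ and $\{\go_1\collide\come\}$, and the paper's key device is the \emph{reversal map} $\rev_n$ (reflect the block $[x_1,x_n]$, flipping velocities and spins), which is measure-preserving for any i.i.d.\ interdistance distribution $m$. Applying $\rev_n$ yields the exact identity $\P((A=n)\wedge(\go_1\to\stay))=\tfrac{p}{2}\sum_{1<k<n}\P(A=k-1)\P(A=n-k)$, i.e.\ $\beta_n=\tfrac12\alpha_n$; the factor $\tfrac12$ — the source of your coefficient $\tfrac12 xf(x)^2$ — comes from averaging a configuration with its reversal, which swaps the constraint ``$\bullet_1$ reaches the static first'' with ``$\bullet_n$ reaches it first'' and also handles the atomic (triple collision) case by symmetry of the spin. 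One then needs a further recursion for $\delta_n=\P(\go_1\collide\come_n)$, namely $\delta_n=\tfrac{1-p}{2}p_{n-1}-\sum_{1<k<n}\beta_k p_{n-k}$ (the first crosser of $x_1$ from the right fails to annihilate $\bullet_1$ exactly when $\bullet_1$ was preempted by a static particle), and finally $\gamma_n=\sum_{1<k<n}\delta_k p_{n-k}$. Passing to generating series gives $B=\tfrac12 A=\tfrac12 pxf^2$, $D=\tfrac{1-p}{2}xf-Bf$, $C=Df$, and $B(x)+C(x)=\tfrac12 xf(x)^2-\tfrac{p}{2}xf(x)^4$, your stated answer. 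Your claim that ``universality is then an automatic consequence'' also short-circuits the needed induction: the paper proves by induction on $n$ that each of $p_n,\alpha_n,\beta_n,\gamma_n,\delta_n$ is $m$-independent, and it is this inductive bookkeeping, together with the $\rev_n$-symmetry, that makes the reconciliation between atomic and atomless $m$ rigorous rather than asserted.
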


Let us emphasize that the law of the pairing of particles by annihilation \textit{does} depend on $m$, which makes the above result remarkable. 
As was the case for the monotonicity of $q$, this universality follows a posteriori from explicit computation, while a more direct understanding is still missing. 

The above implicit equation for $f(x)$ in particular enables us to compute the asymptotic decay of densities of particles. Denoting by $c_0(t)$ (resp.\ $c_+(t)$) the density of static (resp.\ speed $+1$) particles at time $t$ on the full-line (see Section~\ref{sec:asymptotics} for details), we have in particular the following result. 

\begin{theorem}[Asymptotics of the density of particles]\label{thm:asympt_density}Assume the law $m$ of distance between particles to be exponentially integrable (i.e.\ $\int e^{\eta x}dm(x)<\infty$ for some $\eta>0$) and have mean $1$. Then, for some $c=c(p)>0$, as $t\to\infty$, 
\[c_0(t)=\begin{cases}
\bigl(\frac{2p}{\pi(1-4p)}+o(1)\bigr)\,t^{-1}	& \text{if $p<1/4$,}\\
\bigl(\frac{2^{2/3}}{4\Gamma(2/3)^2}+o(1)\bigr)\,t^{-2/3}	& \text{if $p=1/4$,}\\
(2\sqrt p-1)^2-o(e^{-ct})	&\text{if $p>1/4$,}
\end{cases}\]
and
\[c_+(t)=\begin{cases}
\bigl(\frac1{\sqrt\pi}\sqrt{1-4p}+o(1)\bigr)\,t^{-1/2}	&\text{if $p<1/4$,}\\
\bigl(\frac{2^{2/3}}{8\Gamma(2/3)^2}+\frac{3 }{8\Gamma(1/3)}+o(1)\bigr)\,t^{-2/3}	&\text{if $p=1/4$,}\\
o(e^{-ct})	&\text{if $p>1/4$.}
\end{cases}\]
Furthermore, when $m=\delta_1$, if $p>1/4$, 
\begin{equation}\label{eq:equiv_c0_discrete}
(2\sqrt p-1)^2-c_0(n)\equivalent{\substack{n\to\infty\\n\text{ even}}}2c_+(n) \equivalent{\substack{n\to\infty\\n\text{ even}}}\frac{9p}{2\sqrt p+1}\sqrt{\frac{8(1-p)(8p+1)}{\pi(4p-1)^5}}R^{-(n+1)}n^{-3/2}
\end{equation}
where $R=R(p)=\frac3{8p+1}\sqrt{\frac {3p}{1-p}}$.
\end{theorem}

Let us note that the assumption of exponential integrability of $m$ is not purely technical, since it ensures the above \emph{universal} asymptotic behavior. As explained in the remark p.~\pageref{rmk:exponential_integrability}, a mere integrability assumption could lead to different asymptotics. Let us also mention that, on explicit examples, a (computationaly demanding) way to obtain asymptotics for densities of particles would rely on studying Laplace transform using Tauber theory, starting from the implicit Equation~(1) from~\cite{HT}; this was, informally, the approach used in~\cite{droz1995ballistic} to correctly predict the above asymptotics, in the exponential case. 


\begin{figure}
\includegraphics[width=125mm, page=2]{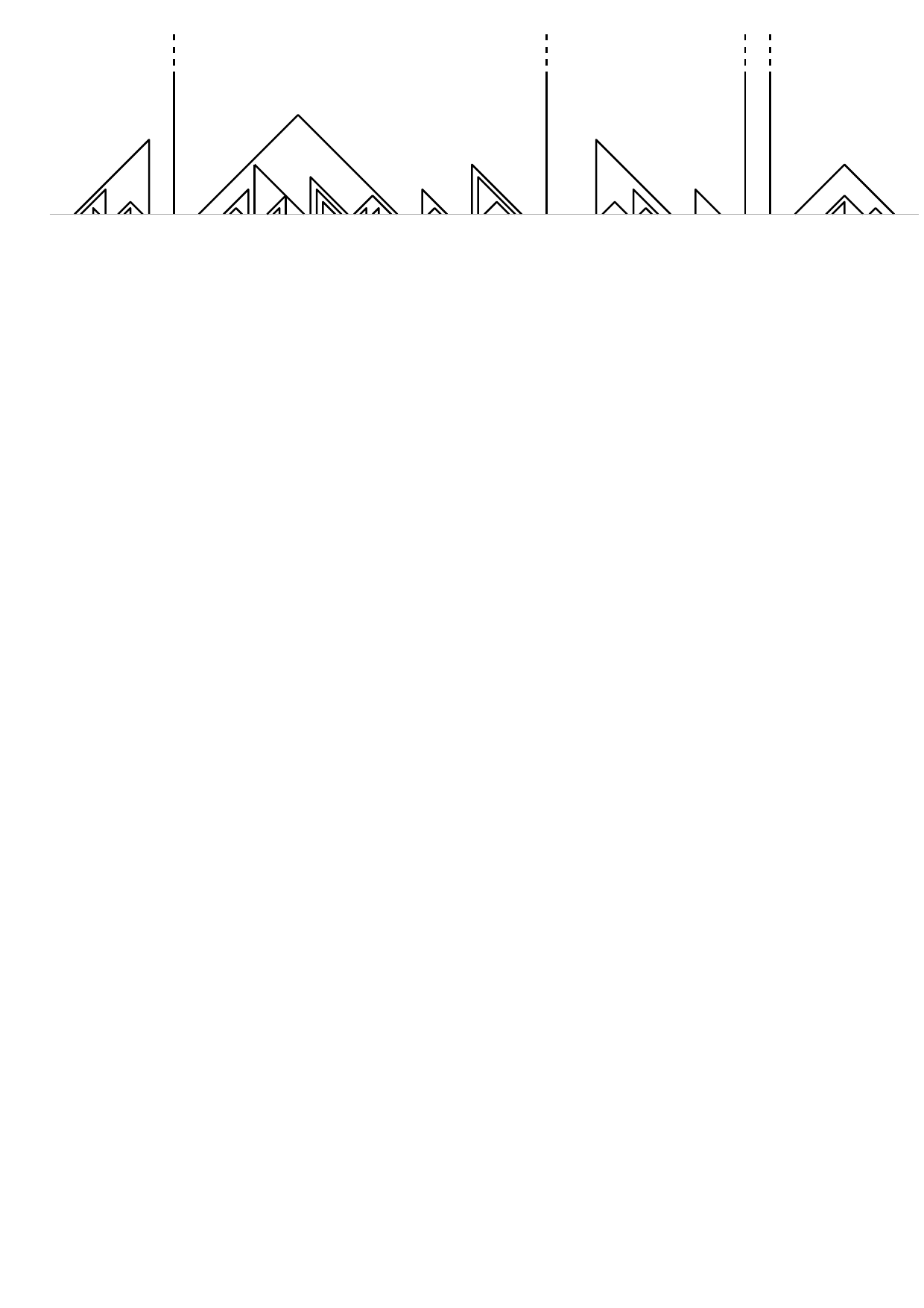}
\caption{Graphical space-time interpretation of the skyline process (see before Proposition~\ref{pro:skyline}), here highlighted in red. 
Note that the system is assumed to be defined on the whole line, although only a piece is represented, and the four static particles not yet annihilated here are assumed not to be hit by any other particle at any further time. }\label{fig:skyline}
\end{figure}

Our last result regards the process on the full-line.
Assume $q<1$, i.e.\ $p>1/4$.  In this case, every location on the line is visited a finite number of times. We are interested in the description of the particles that are the \emph{last} visitors of a point. 

Conditional on $\bullet_0$ being a never-annihilated static particle, i.e.\ under the probability that we will denote by  $\P_\R(\cdot\mid\bullet\nto\,\stay_0\nfrom\bullet)$, define the sequences $(L_n)_{n\ge0}$ and $(R_n)_{n\ge0}$ in $\Z$, and $(\Sigma_n)_{n\ge0}$ in the 4-element set $\{\uparrow,\nearrow\uparrow,\uparrow\nwarrow,\nearrow\nwarrow\}$, by $L_0=R_0=0$, $\Sigma_0=\uparrow$ and, for all $k\ge0$, $L_{k+1}=R_k+1$, and
\begin{itemize}
	\item if $v_{L_{k+1}}=0$ and $\stay_{L_{k+1}}\nfrom\come$, then $R_{k+1}=L_{k+1}$ and $\Sigma_{k+1}=\uparrow$;
	\item else, let $R_{k+1}$ be the index of the particle that annihilates with $\bullet_{L_{k+1}}$, i.e.\ $\bullet_{L_{k+1}}\sim\bullet_{R_{k+1}}$, and $\Sigma_{k+1}=\nearrow\uparrow$ (resp.\ $=\nearrow\nwarrow$, resp.\ $=\uparrow\nwarrow$) if $(v_{L_{k+1}},v_{R_{k+1}})=(+1,0)$ (resp.\ $=(+1,-1)$, resp.\ $=(0,-1)$). 
\end{itemize}
Note that the condition $\{\stay_0\nfrom\bullet\}$ implies that $v_{L_{k+1}}$ is not equal to $-1$, since otherwise the above construction would lead to $0\from\come_{L_{k+1}}$.  
We define $(L_k, R_k, \Sigma_k)_{k<0}$ symmetrically. 

In reference to space-time representation, see Figure~\ref{fig:skyline}, we call the sequence $(R_k-L_k,\Sigma_k)_{k\in\Z}$ the \emph{skyline} process. Note that the sequences $(L_k)_k$ and $(R_k)_k$ can be recovered from $(R_k-L_k)_k$, hence the skyline process indeed contains the information on indices and velocities of the last particles to ever visit some location. 

\begin{proposition}\label{pro:skyline}\begin{enumerate}[a)]
\item\label{p:s:a} Under $\P_\R(\cdot\mid\bullet\nto\,\stay_0\nfrom\bullet)$, $((R_k-L_k, \Sigma_k))_{k\in\Z\setminus\{0\}}$ are  i.i.d.\ random variables, whose distribution does not depend on $m$. More precisely, each is distributed as $(\Delta,\Sigma)$ where the law of $\Sigma$ is given by
\begin{align*}
& P(\Sigma=\uparrow)=p,\qquad P(\Sigma=\nearrow\uparrow)=P(\Sigma=\uparrow\nwarrow)=\sqrt p(1-\sqrt p),\\ 
& P(\Sigma=\nearrow\nwarrow)=(1-\sqrt p)^2
\end{align*}
and the law of $\Delta$, conditional on $\Sigma$ is given by, for each $n\ge0$, 
\begin{align*}
&P(\Delta=n\mid \Sigma=\uparrow) = \delta_{0}(n),\\
&P(\Delta=n\mid \Sigma=\uparrow\nwarrow)= P(\Delta=n\mid \Sigma=\nearrow\uparrow)
=\P(A=n\mid A<\infty),\\
&P(\Delta=n \mid \Sigma=\nearrow\nwarrow) =\P(\go_1\collide\come_{n+1}\mid \go_1\collide\come).
\end{align*}
\item\label{p:s:b} Under $\P_\R(\cdot\mid\bullet\nto\,\stay_0\nfrom\bullet)$, the random variables $\lvert v_{L_k}\rvert$  and $\lvert v_{R_k}\rvert$, ${k\in\Z\setminus\{0\}}$, are i.i.d.\ Bernoulli random variables with parameter $1-\sqrt{p}$.
\end{enumerate}
\end{proposition}

Remarks. 
\begin{itemize}
	\item Note that \ref{p:s:b}) offers a remarkably simple equivalent description of the distribution of $(\Sigma_k)_{k\in\Z\setminus\{0\}}$. Indeed, for all $k\ne0$ the only possible couples of velocities $(v_{L_k}, v_{R_k})$ are $(0,0)$, $(+1,0)$, $(0,-1)$ and $(+1,-1)$ (corresponding to $\Sigma_k=\uparrow$, $\nearrow\uparrow$, $\uparrow\nwarrow$ and $\nearrow\nwarrow$ respectively), which are characterized by their absolute values. 
	\item It will follow from the proof of Theorem~\ref{thm:universality} that the probabilities $\P(0\ffrom\come_n)$ and $\P(\go_1\collide\come_{n+1})$ can be computed by induction, hence the same holds for the distribution of $\Delta$ since both $\P(A<\infty)=q=1/{\sqrt p}-1$ and $\P(\go_1\collide\come)=(1-\sqrt p)^2$ are explicit (see the end of the proof of Proposition~\ref{pro:skyline}). 
	\item The distribution of $\Sigma$ is illustrated in Figure~\ref{fig:shapes} (Right). It is in particular interesting to notice that $P(\Sigma=\uparrow)$ converges to $\frac14>0$ as $p\searrow1/4$, while the density of surviving static particles converges to $0$ in the same time by continuity of $q$. In other words, even though surviving static particles are scarce in barely supercritical systems, they still represent a positive ($>1/4$) proportion of the shapes in the skyline. This contrast is explained by an increase in the expected size of shapes: $E[\Delta\s \Sigma=\nearrow\uparrow]=\E[A\s A<\infty]$ grows to $+\infty$ as $p\searrow1/4$ (cf.\ Theorem~\ref{thm:asymptotics}). 
\end{itemize}

\subsection{Structure of the proofs and organization of the paper}

The results presented in this paper provide in principle two approaches for proving the main theorem: one that appeals to both algebraic and topological arguments and another one that is more purely algebraic. Our main focus is on the first one, that is practically tractable, and also more robust. It indeed enables us to get some information about the alternative discrete model of \cite{junge2} (cf.~Section~\ref{sec:discrete}) and, after the prepublication of the present paper, it was also adapted by Junge and Lyu~\cite{junge-lyu-asymmetric} to study the asymmetric case, and later by Benitez, Junge et al.~\cite{junge3} to study variants of the symmetric case which extend the randomized resolution of triple collisions of our model to all types of collisions. 

The proof of Theorem~\ref{thm:main} decomposes into two parts, gathered in Section~\ref{sec:mainproof}. First, and most importantly, in Subsection~\ref{sec:algebraic}, using symmetries and independence to decompose the event $\{0\from\bullet\}$ in a ``recursive'' way, we are able to prove that, for any $p\in(0,1)$, $q$ solves an algebraic equation. This equation has two roots, namely $1$ and $1/{\sqrt p}-1$. Although this entails $q=1$ when $p\le1/4$, this doesn't prove the converse. 
For the latter, \latin{a priori} regularity properties of $q$ (or $\theta$) as a function of $p$ are needed, which are the subject of Subsection~\ref{sec:topological}. Unfortunately, it is not possible to rely on monotonicity since the apparent lack thereof is precisely a major difficulty in this model. This is instead achieved using finitary conditions characterizing the survival phase, together with the previous dichotomy. 
While proving regularity (specifically, lower or upper semicontinuity) using finitary conditions is classical in statistical physics (cf.\ for instance~\cite[Section 8.3]{zbMATH00195103}), we don't know of another situation where this enables to locate the transition threshold. 

Although the algebraic equation solved by $q$ is actually a particular case of Equation~\eqref{eq:generating_series} (indeed, $q=f(1)$), we keep its proof separate from the proof of Theorem~\ref{thm:universality} for the reason of greater robustness explained above. 

Interestingly, the above Equation~\eqref{eq:generating_series} (that is established by similar symmetry and independence arguments as in Subsection~\ref{sec:algebraic}) can be seen a posteriori to contain sufficient information to imply Theorem~\ref{thm:main} by itself, thereby circumventing the needs to ensure \latin{a priori} regularity of $q$ (in $p$) by exploiting instead the immediate (and considerable) regularity of $f(x)$ in $x$, so as to identify the probabilistically  meaningful root. See the remark at the end of Section~\ref{sec:proof_universality} for a more precise explanation of this algebraic viewpoint, which rather remains a theoretical approach than an actual alternative proof. 

Section~\ref{sec:proof_universality} proves Theorem~\ref{thm:universality} about the universality of the law of $A$. Section~\ref{sec:skyline} proves Proposition~\ref{pro:skyline} about the skyline. Finally, Section~\ref{sec:asymptotics} deduces Theorem~\ref{thm:asympt_density} from Theorem~\ref{thm:universality}. 


Finally, Section~\ref{sec:discrete} states and discusses the analogous, yet weaker, results about the alternative discretized version of the model (i.e.\ under $\P_\N$) that follow from adapting the previous arguments. 

\section{Proof of Theorem~\ref{thm:main}: Phase transition}\label{sec:mainproof}

\subsection{Algebraic identities}\label{sec:algebraic}

In this subsection, we prove
\begin{proposition}\label{prop:dichotomy}
For all $p\in(0,1)$, either $q=1$ or $q=\frac1{\sqrt p}-1$. In particular, $q=1$ if $p\le1/4$. 
\end{proposition}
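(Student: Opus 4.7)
The plan is to derive an algebraic equation satisfied by $q=\P(0\from\bullet)$, read off its non-negative roots, and conclude using the bound $q\le 1$.

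First, I would decompose $q$ according to $v_1$. If $v_1=-1$, then $\bullet_1$, being leftmost, reaches $0$ unobstructed and contributes $\frac{1-p}{2}$ to $q$. If $v_1=0$, then $\bullet_1$ sits statically at $x_1$; since $\bullet_1$ is leftmost, no right-moving particle can come from its left, so no triple collision can ever resolve at $x_1$, and any incoming $\come$ must annihilate pairwise with $\bullet_1$. Hence this case contributes $0$, and the interesting case is $v_1=+1$.

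Given $v_1=+1$, I would decompose further according to the fate of $\bullet_1$. Either $\bullet_1$ survives to $+\infty$, in which case any $\come_k$ that would cross $x_1$ would have to meet $\bullet_1$ at the midpoint of $[x_1,x_k]$; since $\bullet_1$ survives, such a $\come_k$ must itself be annihilated before reaching that midpoint, so no $\come$ crosses $x_1$ at all. Or $\bullet_1$ is eliminated through one of the admissible configurations $\bullet_1\to\stay_k$, $\bullet_1\collide\come_k$, or a triple collision at some $\stay_k$ (resolved by the spin $s_k$ in favour of either $\bullet_1$ or of the incoming $\come$, yielding in the latter sub-case a further recursion). In every such case, the particles $\bullet_2,\ldots,\bullet_{k-1}$ strictly between $\bullet_1$ and its annihilating partner must form a closed annihilation cluster on their own, while the renewal structure of $m$ and the i.i.d.\ distribution of velocities and spins identify the subprocess $\{\bullet_{k+1},\bullet_{k+2},\ldots\}$ with an independent translated copy of the original process. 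Combining these ingredients with the left--right symmetry of the velocity law and the fair spin-resolution of triple collisions, and introducing one or two auxiliary probabilities relating the various annihilation modes, I expect to close the system into a single polynomial identity for~$q$.

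The target identity should be equivalent to
\[
(q-1)\bigl(pq^2+2pq-(1-p)\bigr)=0,
\]
whose non-negative roots are exactly $1$ and $\tfrac{1}{\sqrt p}-1$ (the other root $-\tfrac{1+\sqrt p}{\sqrt p}$ being negative), giving the announced dichotomy. When $p\le 1/4$ we have $\sqrt p\le 1/2$ and hence $\tfrac1{\sqrt p}-1\ge 1$; since $q\le 1$, the only possibility is $q=1$.

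The main obstacle I anticipate is that the decomposition in the second step is not literally an independence decomposition: conditioning on $\bullet_1\sim\bullet_k$ imposes a spatio-temporal constraint on the subprocess $\{\bullet_{k+1},\ldots\}$ (roughly, no $\come$ issued from it may reach $x_k$ before the time at which $\bullet_1$ would have done so), so outright independence is broken. The real work is therefore to symmetrize the various timing scenarios so that a clean algebraic relation emerges that is moreover insensitive to the interdistance law $m$ --- a universality that will be reaffirmed by Theorem~\ref{thm:universality}. I also note that Proposition~\ref{prop:dichotomy} does not by itself determine \emph{which} of the two algebraic values is attained by $q$ in the supercritical regime $p>1/4$; this is the subject of the topological argument of Subsection~\ref{sec:topological}.
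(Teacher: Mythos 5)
Your overall strategy --- condition on $v_1$, then on the fate of $\go_1$, and close the system via symmetry, independence along the renewal structure, and an auxiliary probability or two --- is the paper's strategy, and your target identity $(q-1)\bigl(pq^2+2pq-(1-p)\bigr)=0$ is exactly the paper's $(1-q)\bigl(1-p(1+q)^2\bigr)=0$. However, there is a concrete error at the very first step: the case $v_1=0$ does \emph{not} contribute $0$; it contributes $pq^2$. You are right that the first $\come$ to reach $x_1$ must annihilate pairwise with $\stay_1$ (no triple collision is possible at $x_1$), so that particular $\come$ never reaches $0$. But this clears $x_1$, and a \emph{subsequent} $\come$ can then cross $x_1$ and reach $0$. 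Quantitatively, conditional on $\stay_1$, some left-mover $\come_j$ reaches $x_1$ with probability $q$; and since $\come_j$ travels left unimpeded, no particle from $(x_j,\infty)$ can have crossed $x_j$ earlier, so the configuration on $(x_j,\infty)$ decouples and yields an additional independent factor $q$ for the probability that some further particle later crosses $x_j$ (hence $0$). This gives $\P(0\from\bullet\mid\stay_1)=q^2$, and without this term your bookkeeping cannot actually close to the identity you wrote down.

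Beyond this, two smaller comments. The assertion that $\bullet_2,\ldots,\bullet_{k-1}$ must ``form a closed annihilation cluster on their own'' is only correct after restricting to the block $[x_1,x_k]$; the paper phrases the relevant events precisely as $\{\go_1\to x_k\}_{(0,x_k)}$ etc., and the passage from the unrestricted event to this product form is where the real content lies. And the obstacle you correctly anticipate --- that conditioning on $\bullet_1\sim\bullet_k$ imposes a timing constraint coupling the block $[x_1,x_k]$ to $(x_k,\infty)$ --- is dealt with in the paper by the measure-preserving block-reversal map $\rev_k$ (Lemma~\ref{flip}): reversing $[x_1,x_k]$ turns $\{\go_1\to\stay_k\}$ into $\{\stay_1\from\come_k\}$, for which the desired independence between $[x_1,x_k]$ and $(x_k,\infty)$ is manifest. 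You would need this reversal (or an equivalent device), plus the $pq^2$ correction above, to make the plan rigorous.
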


Let us introduce temporary notation for the probabilities involved in the next two lemmas:
\begin{gather*}\VARs\defeq\prob{(0\nfrom\bullet)\wedge(\go_{1}\tto\stay)},\\
\VARr\defeq\prob{(0\from\bullet)\wedge(\go_{1}\tto\stay)}.\end{gather*}
These probabilities can be expressed in terms of $p$ and $q$ as follows:
\begin{lemma}\label{flip}
\begin{enumerate}
[a)]
	\item\label{s-relation} $\VARs=pq(1-q)$.
	\item\label{r-relation} $\VARr =\frac12pq^2$. 
\end{enumerate}
\end{lemma}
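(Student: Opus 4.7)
My plan is to decompose both probabilities by conditioning on the index $j$ of the static particle $\bullet_j$ with which $\bullet_1$ annihilates. For each $j\ge 2$, I would write $\{\bullet_1\to\bullet_j\}=U_j\cap V_j$, where $U_j$ depends only on $\bullet_1,\ldots,\bullet_j$ (it asks that in the finite sub-system consisting of the first $j$ particles alone one has $\bullet_1\to\bullet_j$) and $V_j$ depends only on $\bullet_{j+1},\bullet_{j+2},\ldots$ together with the meeting time $x_j-x_1$ (it asks that no left-mover from the right half reaches $x_j$ within that time). The key observation is that under $\{\bullet_1\to\bullet_j\}$, all of $\bullet_1,\ldots,\bullet_j$ have annihilated within $[x_1,x_j]$ by time $x_j-x_1$ and the interval $[0,x_j]$ is empty thereafter; a particle therefore crosses $0$ from the right if and only if it is a left-mover in $\{\bullet_{j+1},\ldots\}$ that subsequently crosses $x_j$.

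For part~(a), under $\{\bullet_1\to\bullet_j\}$ the event $\{0\nfrom\bullet\}$ is equivalent to the stronger event $W_j^c$ that no left-mover from $\{\bullet_{j+1},\ldots\}$ \emph{ever} crosses $x_j$. Since $W_j^c\subset V_j$, the joint event collapses to $U_j\cap W_j^c$; independence of $U_j$ (depending on the first $j$ particles) and $W_j^c$ (depending on the rest), combined with the shift-invariance identity $\P(W_j^c)=1-q$, gives $\P(\bullet_1\to\bullet_j,\,0\nfrom\bullet)=\P(U_j)(1-q)$. Summing over $j\ge2$ yields $\VARs=(1-q)\sum_{j\ge2}\P(U_j)$. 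It then suffices to prove the combinatorial identity $\sum_j\P(U_j)=pq$, which I would obtain by a left-right reflection within each finite sub-system $\{\bullet_1,\ldots,\bullet_j\}$: reflection sends $U_j$ to the event $\{v_1=0,\,v_j=-1,\,\bullet_1\from\bullet_j\}$, and the resulting sum can be matched with the probability that $v_1=0$ and some left-mover from the independent sub-system $\{\bullet_2,\bullet_3,\ldots\}$ reaches $x_1$, which by definition is $p\cdot q$.

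For part~(b), the same decomposition gives $\VARr=\sum_j\P(U_j\cap V_j\cap W_j)$. Since $V_j^c\subset W_j$, independence of $U_j$ from the right-half events yields
\[\P(U_j\cap V_j\cap W_j)=\P(U_j\cap W_j)-\P(U_j\cap V_j^c)=\P(U_j)\,q-\P(U_j\cap V_j^c).\]
The crucial step is to show $\sum_j\P(U_j\cap V_j^c)=\tfrac12\,pq^2$, for which I would appeal to an exchangeability argument: conditional on $U_j\cap W_j$ --- i.e.\ both the ``left'' collision of $\bullet_1$ with $\bullet_j$ at time $x_j-x_1$ and the earliest ``right'' arrival at $x_j$ of a left-mover from $\{\bullet_{j+1},\ldots\}$ are present --- these two times are independent and equidistributed (the reflection of part~(a) exhibits the left time as a first-arrival time in an independent copy of the process), so each is equally likely to be first. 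Substituting back then gives $\VARr=pq\cdot q-\tfrac12\,pq\cdot q=\tfrac12 pq^2$.

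The main obstacle is the identification $\sum_j\P(U_j)=pq$ in part~(a): the events $U_j$ are \emph{not} disjoint across $j$ (a single configuration can satisfy $U_j$ in several nested finite sub-systems), so the left-hand side is an expected count rather than a probability, and its reorganisation into $pq$ requires using the reflection together with a careful bijective matching between finite-sub-system configurations and first-hitting configurations in the infinite sub-system. The analogous delicate point in part~(b) is pinning down the joint equidistribution of the two competing meeting times, from which the factor $\tfrac12$ emerges; this is where I expect the real work of the proof to lie.
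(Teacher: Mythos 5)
Your part~(a) is essentially the paper's argument: the reversal map $\rev_j$ on $[x_1,x_j]$ converts $U_j=\{\go_1\to\stay_j\}_{[x_1,x_j]}$ into $\{\stay_1\from\come_j\}$, and since the latter events are disjoint across $j$ (a static leftmost particle can only annihilate once), $\sum_j\P(U_j)=\P(\stay_1\from\bullet)=pq$. Your worry about the $U_j$ being non-disjoint is resolved exactly by this observation, so the ``careful bijective matching'' you anticipate is just the reversal plus disjointness on the image side. That part is sound.

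Part~(b), however, has a genuine gap, and it is precisely the place where the spin variables $s_i$ --- which never appear in your proposal --- do their work. Your decomposition $\{\go_1\to\stay_j\}=U_j\cap V_j$ is incorrect when $m$ has atoms: if the first left-mover from the right reaches $x_j$ at exactly time $x_j-x_1$ (a triple collision), whether $\bullet_1$ and $\bullet_j$ annihilate is decided by $s_j$, an event neither contained in nor disjoint from $U_j\cap V_j$ (for either convention of ``within''). Consequently the claim that, conditional on $U_j\cap W_j$, ``each is equally likely to be first'' is false whenever ties have positive probability: two i.i.d.\ times $T_1,T_2$ with atoms satisfy $\P(T_1<T_2)=\tfrac12\bigl(1-\P(T_1=T_2)\bigr)<\tfrac12$, so your exchangeability step produces $p\,\P(D<D'<\infty)$, which equals $\tfrac12 pq^2-\tfrac12 p\,\P(D=D'<\infty)$, strictly less than the stated $\tfrac12pq^2$. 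The paper recovers the exact answer by keeping the triple-collision term explicit: the resolution by a fair spin contributes exactly $\tfrac12 p\,\P(D=D'<\infty)$, filling the deficit. (In fact, what your argument proves is the identity for the \emph{discretized} model of Section~\ref{sec:discrete}, where Lemma~\ref{lem:discrete-identity}~b) gives $\sh=\tfrac12 p\qh^2-\tfrac12\Ph_\R(D=D'<\infty)$ --- a model which, as the paper shows, genuinely has a different critical point.) To repair the proof you must, as in the paper, split $\{0\from\bullet_n\}\cap\{\go_1\to\stay_k\}$ into the strict-inequality event and the tie event with $s_k=-1$, apply $\rev_k$ to both, and observe that the reversal sends $\{s_k=-1\}$ to $\{s_1=+1\}$, which then factors out as a $\tfrac12$ in front of the tie probability.
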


\begin{proof} 
For any integer $k\ge2$, and any configuration $\omega=((x_n,v_n,s_n))_{n\ge1}$, we define $\rev_k(\omega)$ to be the configuration obtained by reversing the interval $[x_1,x_k]$, that is, a particle at position $x\in[x_1,x_k]$ in $\omega$ corresponds to a particle at position $x_1+(x_k-x)$ moving in the opposite direction and having the opposite spin in $\rev_k(\omega)$, while particles outside $[x_1,x_k]$ are the same in $\omega$ and $\rev_k(\omega)$. Note that the speeds $+1$ and $-1$ have same probability $\frac{1-p}2$, the spins $+1$ and $-1$ independently have the same probability $1/2$, and given $\omega$ on $\R\setminus(x_1,x_k)$, the law of the configuration in $(x_1,x_k)$ of both $\omega$ and $\rev_k(\omega)$ is given by $k-2$ particles separated by $k-1$ independent distances with law $m$ and conditioned on having a total sum equal to $x_k-x_1$. Consequently $\rev_k$ is measure-preserving for any distribution $m$.

\ref{s-relation}) Let $k\ge2$. Note that $\{0\nfrom\bullet\}\cap\{\go_{1}\tto\stay_k\}=\{\go_{1}\tto\stay_k\}_{[x_1,x_k]}\cap\{x_k\nfrom\bullet\}_{(x_k,\infty)}$, since on the event
$\{\go_{1}\tto\stay_k\}_{[x_1,x_k]}\cap\{x_k\from\bullet_j\}_{(x_k,\infty)}$, $\bullet_j$ will either annihilate $\bullet_k$ or reach $0$.

The map $\rev_k$ induces a bijection between $\{\go_{1}\tto\stay_k\}_{[x_1,x_k]}\cap\{x_k\nfrom\bullet\}_{(x_k,\infty)}$ and 
$\{\stay_1\from\come_k\}_{[x_1,x_k]}\cap\{x_k\nfrom\bullet\}_{(x_k,\infty)}$. Since $\rev_k$ preserves the measure, it follows that 
\begin{align}
\P(\{0\nfrom\bullet\}\cap\{\go_{1}\tto\stay_k\}) 
	& =\P\bigl(\{\stay_1\from\come_k\}\cap\{x_k\nfrom\bullet\}_{(x_k,\infty)}\bigr)\nonumber\\
	& =\P(\stay_1\from\come_k)\P\bigl((x_k\nfrom\bullet)_{(x_k,\infty)}\bigr) = \P(\stay_1\from\come_k)(1-q),\label{eq:lemma_a}
\end{align}
where the second equality comes from the fact that the two events $\{\stay_1\from\come_k\}$ and $\{x_k\nfrom\bullet\}_{(x_k,\infty)}$ depend on the configuration on the disjoint intervals $[x_1,x_k]$ and $(x_k,\infty)$ respectively, and the last equality comes from translation invariance. Summing over $k\ge2$ finally gives, due to~\eqref{eq:lemma_a},
\[\P(\{0\nfrom\bullet\}\cap\{\go_{1}\tto\stay\})= \P(\stay_1\from\bullet)(1-q) = pq(1-q).\]

\ref{r-relation}) Let $n\ge k\ge2$. 
The event $\{0\ffrom\bullet_n\}\cap\{\go_{1}\tto\stay_k\}$ happens if, and only if, the $n$th particle is the first to reach $x_k$ from the right and the leftmost particle reaches the static particle at $x_k$ either strictly before this, i.e.\ $x_k-x_1<x_n-x_k$, or at the same time and this triple collision resolves by annihilation of the leftmost particle, i.e.\ $x_k-x_1=x_n-x_k$ and $s_k=-1$. Thus, 
\begin{align*}
\lefteqn{\{0\ffrom\bullet_n\}\cap\{\go_{1}\tto\stay_k\}}\\
	& = \{(\stay_k)\wedge(\go_1\fto x_k)_{[x_1,x_k)}\wedge(x_k\ffrom\bullet_n)_{(x_k,x_n]}\wedge(x_k-x_1<x_n-x_k)\}\\
	& \cup \{(\stay_k)\wedge(\go_1\fto x_k)_{[x_1,x_k)}\wedge(x_k\ffrom\bullet_n)_{(x_k,x_n]}\wedge(x_k-x_1=x_n-x_k)\wedge(s_k=-1)\}
\end{align*}
and, applying $\rev_k$, we readily have
\begin{align*}
\lefteqn{\rev_k(\{0\ffrom\bullet_n\}\cap\{\go_{1}\tto\stay_k\})}\\
	& = \{(\stay_1)\wedge(x_1\ffrom\come_k)_{(x_1,x_k]}\wedge(x_k\ffrom\bullet_n)_{(x_k,x_n]}\wedge(x_k-x_1<x_n-x_k)\}\\
	& \cup \{(\stay_1)\wedge(x_1\ffrom\come_k)_{(x_1,x_k]}\wedge(x_k\ffrom\bullet_n)_{(x_k,x_n]}\wedge(x_k-x_1=x_n-x_k)\wedge(s_1=+1)\}.
\end{align*}
Because $\rev_k$ preserves the measure, it follows that
\begin{align*}
\lefteqn{\P\bigl((0\ffrom\bullet_n)\wedge(\go_{1}\tto\stay_k)\bigr)}\\
	& = \P\bigl((\stay_1)\wedge(x_1\ffrom\come_k)_{(x_1,x_k]}\wedge(x_k\ffrom\come_n)_{(x_k,x_n]}\wedge(x_k-x_1<x_n-x_k)\bigr)\\
	&\!\! + \P\bigl((\stay_1)\wedge(x_1\ffrom\come_k)_{(x_1,x_k]}\wedge(x_k\ffrom\come_n)_{(x_k,x_n]}\wedge(x_k-x_1=x_n-x_k)\wedge(s_1=+1)\bigr)\\
	& = p\P\bigl((x_1\ffrom\come_k)_{(x_1,x_k]}\wedge(x_k\ffrom\come_n)_{(x_k,x_n]}\wedge(x_k-x_1<x_n-x_k)\bigr)\\
	&\!\! + \frac12p\P\bigl((x_1\ffrom\come_k)_{(x_1,x_k]}\wedge(x_k\ffrom\come_n)_{(x_k,x_n]}\wedge(x_k-x_1=x_n-x_k)\bigr),
\end{align*}
where the last equality follows from the independence of $(v_1,s_1)$ from $x_1$ and the configuration on $(x_1,x_n]$. The environment on $(x_k,x_n]$ is independent of the environment on $(x_1,x_k]$ and distributed as the environment on $(0,x_{n-k+1}]$. Therefore, summing over all values of $n(\ge k)$ and then of $k\ge1$, 
\begin{align}
\P\bigl((0\from\bullet)\wedge(\go_{1}\tto\stay)\bigr)
	& = p\P\bigl((D< \infty)\wedge(D'<\infty)\wedge(D<D')\bigr)\nonumber\\
	& + \frac12p\P\bigl((D<\infty)\wedge(D'<\infty)\wedge(D=D'))\bigr)\label{half-half}
\end{align}
where $D,D'$ are independent random variables distributed as
\[D\defeq\min\{x_k\st 0\from\bullet_k\}\in(0,\infty).\]
Since by symmetry $\P(D<D'<\infty)=\frac12\P(D\ne D', D<\infty,D'<\infty)$, combining the previous probabilities yields
\begin{align*}
\P\bigl((0\from\bullet)\wedge(\go_{1}\tto\stay)\bigr)
	& = \frac12 p\P(D<\infty)^2=\frac12pq^2.\qedhere
\end{align*}
\end{proof}

\begin{lemma}\label{lem:identity}
$q=\frac{1-p}2+pq^2+\VARr+\Bigl(\frac{1-p}2-\VARs-\VARr\Bigr)q$.
\end{lemma}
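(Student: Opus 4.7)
The plan is to decompose $q=\prob{0\from\bullet}$ according to the velocity of $\bullet_1$ and, in the case $v_1=+1$, to whether $\bullet_1$ mutually annihilates with a static particle. The four resulting pieces will match the four summands on the right-hand side of the identity. The approach parallels the proofs of \lem{flip}, relying on an interval-factorization: events determined by the configurations in disjoint subintervals of $(0,\infty)$ are independent.

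The case $v_1=-1$ is immediate and contributes $(1-p)/2$, since with nothing in $(0,x_1)$ the particle $\bullet_1=\come_1$ reaches $0$ unobstructed. For $v_1=0$, the particle $\bullet_1=\stay_1$ cannot participate in a triple collision (no particle lies to its left to form the right-moving leg of such a triple), so it can only be annihilated via a simple pair collision $\stay_1\from\come_k$ for some $k\geq2$. I would then establish the decomposition
\[\{\stay_1\from\come_k\}\cap\{0\from\bullet\} \;=\; \{\stay_1\from\come_k\}_{[x_1,x_k]}\cap\{x_k\from\bullet\}_{(x_k,\infty)},\]
using that no $\come_j$ with $j>k$ can disrupt the internal $[x_1,x_k]$ dynamics (for any candidate interaction $\come_k$ reaches it first since $x_j>x_k$), and that once $\stay_1$ and $\come_k$ mutually annihilate the interval $[x_1,x_k]$ is empty, so any later $\come_{j'}$ that reaches $x_k$ from the right continues unobstructed to $0$. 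Independence of configurations on disjoint intervals and translation invariance then yield $\prob{(\stay_1\from\come_k)\wedge(0\from\bullet)}=q\cdot\prob{\stay_1\from\come_k}$. Summing on $k$ and using $\prob{\stay_1\from\come}=pq$, this case contributes $pq^2$.

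For $v_1=+1$ I split $\{v_1=+1\}\cap\{0\from\bullet\}$ into three disjoint sub-events: $\{\go_1\to\stay\}\cap\{0\from\bullet\}$, $\{\go_1\collide\come\}\cap\{0\from\bullet\}$, and $\{v_1=+1\}\cap\{\bullet_1\text{ never annihilated}\}\cap\{0\from\bullet\}$. The first contributes $\VARr$ by definition. The third has probability zero: if $\bullet_1=\go_1$ is never annihilated then its trajectory $t\mapsto x_1+t$ forms a barrier that any $\come$ reaching $0$ would first have to cross, and any such crossing would force either the annihilation of $\bullet_1$ (contradicting the hypothesis) or the annihilation of the $\come$ itself in a triple collision (so it does not reach $0$). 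For the middle sub-event, the same interval-factorization applied at meeting position $(x_1+x_k)/2$ and time $(x_k-x_1)/2$ gives
\[\{\go_1\collide\come_k\}\cap\{0\from\bullet\} \;=\; \{\go_1\collide\come_k\}_{[x_1,x_k]}\cap\{x_k\from\bullet\}_{(x_k,\infty)},\]
hence $\prob{(\go_1\collide\come)\wedge(0\from\bullet)}=q\cdot\prob{\go_1\collide\come}$. Using that the leftmost right-moving particle is almost surely annihilated, we have $\prob{\go_1\collide\come}=\prob{v_1=+1}-\prob{\go_1\to\stay}=(1-p)/2-(\VARs+\VARr)$, and summing the four contributions yields the identity.

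The main obstacle is the careful verification of the interval-factorizations, i.e.\ checking that the condition $\go_1\collide\come_k$ or $\stay_1\from\come_k$ in the restricted $[x_1,x_k]$ process forces all internal particles of $(x_1,x_k)$ to be annihilated by the meeting time (so that the interval is empty immediately afterwards) and that no particle from $(x_k,\infty)$ can disturb the internal dynamics. Both facts rest on the elementary observation that a left-mover starting further to the right reaches any given point strictly later. A secondary technicality is establishing that the leftmost right-moving particle is almost surely eventually annihilated, which can be handled by a standard renewal/ergodic argument.
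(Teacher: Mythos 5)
Your proof is correct and follows essentially the same route as the paper's: condition on the velocity of $\bullet_1$, use the fact that the leftmost right-moving particle is a.s.\ annihilated to split the $v_1=+1$ case, and apply interval-factorization together with translation invariance to evaluate the $\stay_1$ and $\go_1\collide\come$ contributions as $pq^2$ and $q\,\prob{\go_1\collide\come}$. The only cosmetic difference is that you supply an additional (and ultimately redundant) deterministic barrier argument to kill the ``$\go_1$ never annihilated'' sub-event within $\{0\from\bullet\}$, whereas the paper simply cites that this has probability zero unconditionally --- a fact you also end up needing for $\prob{\go_1\collide\come}=\frac{1-p}2-\VARs-\VARr$.
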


\begin{proof}
Conditioning on the velocity of the leftmost particle, we have
\begin{equation}q=\frac{1-p}{2}\prob{0\from\bullet\mid\come_1}
+p\prob{0\from\bullet\mid\stay_1}
+\prob{(0\from\bullet)\wedge(\go_{1})}.\label{cond1}\end{equation}
Clearly if the leftmost particle moves left it will reach $0$, hence $\prob{0\from\bullet\mid\come_1}=1$. If the leftmost particle is static, it is annihilated with probability $q$, since this
equals $\prob{\onfuture[x_1]{x_1\from\bullet}}$. Note, however, that this event occurs if and only if $\onab[1]{j}{\stay_1\from\come_j}$ occurs 
for some $j$, since the progress of a left-moving particle cannot be affected by particles further to the right. 
Given that $\onab[1]{j}{\stay_1\from\come_j}$ occurs,
a particle reaches $0$ if and only if $\onfuture[x_j]{x_j\from\bullet}$ also occurs, since the fact that $\bullet_j$ is left-moving and annihilates
$\bullet_1$ means that no particle from the right of $\bullet_j$ can encounter any particles after reaching $x_j$. Clearly $\onfuture[x_j]{x_j\from\bullet}$
is independent of $\onab[1]{j}{\stay_1\from\come_j}$ and has probability $q$, so $\prob{0\from\bullet\mid\stay_1}=q^2$.

If the leftmost particle moves right, it must eventually be annihilated (see e.g.\ \cite[Lemma 3.3]{sidoravicius-tournier}). Thus we have
\begin{equation}\frac{1-p}{2}=\prob{\go_{1}\tto\stay}+\prob{\go_{1}\collide\come}.\label{hits}\end{equation}
Conditioning on how the leftmost particle is annihilated, we have
\begin{equation}
\prob{(0\from\bullet)\wedge(\go_{1})}=\prob{(0\from\bullet)\wedge(\go_{1}\tto\stay)}
+\prob{0\from\bullet\mid\go_{1}\collide\come}\prob{\go_{1}\collide\come}.\label{cond}
\end{equation}
Now $\prob{0\from\bullet\mid\go_{1}\collide\come}=q$, since, 
given that $\come_j$ annihilates $\go_{1}$,
$\{0\from\bullet\}$ if and only if $\onfuture[x_j]{x_j\from\bullet}$. Thus \eqref{cond} becomes
\begin{equation}
\prob{(0\from\bullet)\wedge(\go_{1})}=\prob{(0\from\bullet)\wedge(\go_{1}\tto\stay)}
+q\prob{\go_{1}\collide\come},\label{q-if-right}
\end{equation}
and, combining \eqref{hits} and \eqref{q-if-right},
\begin{align}
\prob{(0\from\bullet)\wedge(\go_{1})}&=s+q\bbr{\frac{1-p}{2}-\prob{\go_{1}\tto\stay}}\nonumber\\
&=s+q\bbr{\frac{1-p}{2}-r-s}.\label{use-r}\end{align}
Combining \eqref{use-r} and \eqref{cond1} gives
\begin{align*}q&=\frac{1-p}{2}+pq^2+s+q\bbr{\frac{1-p}{2}-r-s}.\qedhere\end{align*}
\end{proof}

\begin{proof}[{Proof of Proposition~\ref{prop:dichotomy}}]
Combining the previous two lemmas yields immediately the equation
\[0=1-q-p-pq+pq^2+pq^3\]
hence
\[0=1-q-p(1+q-q^2-q^3)
=(1-q)(1-p(1+q)^2),\]
implying, since $q\ge0$, that either $q=1$ or $q=\frac1{\sqrt p}-1$. Since $q\le1$, we conclude that $q=1$ when $p\le1/4$. 
\end{proof}

\subsection{A priori regularity properties}\label{sec:topological}

Let us prove the following result, which in combination with Proposition~\ref{prop:dichotomy} immediately gives Theorem~\ref{thm:main}. Remember that $\theta(p)=(1-q)^2$ is the survival probability of a static particle in the full line process. 

\begin{proposition}\label{prop:connectivity}
For all $p\in(\frac14,1)$, $\theta(p)>0$. 
\end{proposition}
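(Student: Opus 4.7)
My plan is to combine Proposition~\ref{prop:dichotomy} with an \emph{a priori} regularity property of $q$ as a function of $p$, together with the known fact that $\theta(p)>0$ for $p\ge 1/3$ from~\cite{sidoravicius-tournier}. Set $S\defeq\{p\in(1/4,1):q(p)<1\}$. By the dichotomy, $S=\{p\in(1/4,1):q(p)=1/\sqrt{p}-1\}$, and $\theta(p)=(2-1/\sqrt{p})^2>0$ precisely on $S$. I will show that $S$ is both open and closed in $(1/4,1)$; since $S\supseteq[1/3,1)\ne\emptyset$ and $(1/4,1)$ is connected, this forces $S=(1/4,1)$, which is exactly Proposition~\ref{prop:connectivity}.

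For \emph{closedness} of $S$, I would first show that $p\mapsto q(p)$ is lower semicontinuous. Writing $\{0\from\bullet\}$ as the increasing union over $T\in\N$ of the events $A_T=\{\text{some particle reaches }0\text{ before time }T\}$, and using the finite speed of propagation (velocities bounded by $1$), each $A_T$ is almost surely determined by the finitely many particles in a bounded light-cone region; by dominated convergence, $p\mapsto\P(A_T)$ is continuous, so $q=\sup_T\P(A_T)$ is a pointwise supremum of continuous functions, hence lower semicontinuous. Then if $p_n\to p_0\in(1/4,1)$ with $q(p_n)<1$, the dichotomy gives $q(p_n)=1/\sqrt{p_n}-1\to 1/\sqrt{p_0}-1<1$, and lower semicontinuity yields $q(p_0)\le 1/\sqrt{p_0}-1<1$, so $p_0\in S$.

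For \emph{openness} of $S$, I would exhibit a countable family of finitary events $(F_L)_{L\ge 1}$, each depending only on particles in $[0,L]$, with $F_L\subseteq\{0\nfrom\bullet\}$ and $\bigcup_L F_L=\{0\nfrom\bullet\}$ up to a null set. Each $\P(F_L)$ is continuous (indeed polynomial) in $p$, so $\{p:\P(F_L)>0\}$ is open, and therefore $S=\{p:\P(\{0\nfrom\bullet\})>0\}=\bigcup_L\{p:\P(F_L)>0\}$ is open. The intended form of $F_L$ is a \emph{shielding} event: the configuration on $[0,L]$ contains a robust barrier (for instance, a long enough block of static particles located inside $(0,L)$) which absorbs every incoming particle from $(L,\infty)$, ensuring that $\{0\nfrom\bullet\}$ holds regardless of the configuration on $(L,\infty)$.

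The hard part will be the construction of such a family $(F_L)$ and the verification that it exhausts $\{0\nfrom\bullet\}$. Because chains of annihilations let arbitrarily distant particles influence those near the origin, the barrier must be selected so as to be immune to every admissible future evolution of the particles in $(L,\infty)$; conversely, whenever $\{0\nfrom\bullet\}$ occurs, one has to show that such a shielding arises inside $[0,L]$ with positive probability for some finite $L$, so that $\P(F_L)\uparrow 1-q(p)$. Once this is achieved, $S$ is clopen and nonempty in the connected interval $(1/4,1)$, so $S=(1/4,1)$, and $\theta(p)=(2-1/\sqrt p)^2>0$ for every $p\in(1/4,1)$.
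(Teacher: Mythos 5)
Your overall clopen-plus-connectedness structure is the same as the paper's, and your closedness argument is correct and essentially the paper's Lemma~\ref{lem:subcrit_open}: both exhibit $q$ as an increasing limit of quantities depending only on a finite window (you use the time-$T$ events $A_T$, the paper uses restriction to the first $k$ particles), each of which is continuous in $p$, giving lower semicontinuity of $q$; combined with the dichotomy of Proposition~\ref{prop:dichotomy} this shows $S$ is closed in $(\tfrac14,1)$.

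The openness half, however, has a genuine gap: the shielding events you propose cannot exist. If $F_L$ is measurable with respect to the initial configuration on $[0,L]$ and $\P(F_L)>0$, then necessarily $F_L\not\subseteq\{0\nfrom\bullet\}$. To see this, fix any $\omega_L\in F_L$ with $n$ particles in $[0,L]$ and extend it by letting the next $n+1$ particles (those just to the right of $L$) all be left-moving, which has positive probability given $F_L$. These $n+1$ left-movers cannot collide with one another, nor with anything to their right (a particle starting to their right, whether static, right-moving or left-moving, can never catch a left-mover ahead of it), so any one of them that is annihilated is annihilated in a collision that destroys at least one of the $n$ particles from $[0,L]$; this supplies at most $n$ distinct such deaths. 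Hence at least one of the $n+1$ left-movers survives and reaches $0$. So no finite configuration on $[0,L]$ can guarantee $\{0\nfrom\bullet\}$ against an unbounded influx of left-movers, and the family $(F_L)$ you seek does not exist.

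The paper gets around this precisely because it does not look for a finitary event implying survival. Instead, Proposition~\ref{prop:characterization} characterizes the supercritical phase by positivity of a finitary expectation: $\theta(p)>0$ if and only if $\E[N_k]>0$ for some $k$, where $N_k$ is the difference between the numbers of surviving static and surviving left-moving particles in the process restricted to the first $k$ particles. The map $p\mapsto\E[N_k]$ is a polynomial, so $\bigcup_k\{p:\E[N_k]>0\}$ is open, and this union is shown to equal the supercritical set. The nontrivial ``if'' direction uses the superadditivity Lemma~\ref{lem:superadditivity} together with an exploration argument and the law of large numbers, and this is exactly the mechanism that handles the possibility of an arbitrarily large influx from the right. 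That step is where the real work lies, and it is not replaceable by the barrier construction you sketch.
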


The proof follows from the two lemmas below. These lemmas respectively rely on two different characterizations of the supercritical phase $\{p\st\theta(p)>0\}$ by means of sequences of conditions about finite subconfigurations; the definition and properties of the more involved characterization are developed in the next subsection. 

\begin{lemma}\label{lem:subcrit_open}
The set of subcritical parameters $\{p\in(\frac14,1)\st\theta(p)=0\}$ is open. 
\end{lemma}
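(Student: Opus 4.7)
The plan is to deduce openness of the subcritical set from the dichotomy of Proposition~\ref{prop:dichotomy} together with lower semicontinuity of $p \mapsto q$. On $(\tfrac14, 1)$ the two possible values $1$ and $1/\sqrt p - 1$ of $q$ are distinct, so subcriticality is equivalent to $q = 1$. If lower semicontinuity is known and $(p_n)$ is any sequence of supercritical parameters in $(\tfrac14, 1)$ converging to some $p_0 \in (\tfrac14, 1)$, then $q(p_n) = 1/\sqrt{p_n} - 1 \to 1/\sqrt{p_0} - 1 < 1$, and therefore $q(p_0) \le \liminf_n q(p_n) < 1$; by the dichotomy $p_0$ must itself be supercritical. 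Consequently the supercritical set is closed in $(\tfrac14, 1)$, or equivalently the subcritical set is open.

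For lower semicontinuity, I would approximate $q$ from below by polynomial functions of $p$. For each $N \ge 1$, introduce the event $E_N = \{\exists\, n \le N : 0 \from \bullet_n\}$. Since every velocity lies in $\{-1, 0, +1\}$, no particle $\bullet_m$ with $m > n$ can catch the leftward-moving $\bullet_n$ from the right, so $\{0 \from \bullet_n\}$ is measurable with respect to the configuration of $\bullet_1, \dots, \bullet_n$ alone. The events $E_N$ therefore form an increasing sequence of finitary events with $\bigcup_N E_N = \{0 \from \bullet\}$, so that $\P(E_N) \uparrow q$ by monotone convergence.

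The next step is to check that each $\P(E_N)$ is a polynomial in $p$. Conditioning on the positions $(x_1, \dots, x_N)$, whose joint law does not involve $p$, the event $E_N$ is a deterministic function of the velocities --- i.i.d.\ on $\{-1, 0, +1\}$ with weights $\bigl(\tfrac{1-p}{2},\, p,\, \tfrac{1-p}{2}\bigr)$ --- and the spins, i.i.d.\ Rademacher independently of $p$. Summing indicators over the finitely many velocity/spin profiles produces a polynomial in $p$ of degree at most $N$, and this polynomial structure survives integration against the law of $(x_1, \dots, x_N)$. Thus $q$ is the pointwise supremum of a sequence of continuous functions, hence lower semicontinuous, completing the argument. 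The main technical point I expect to require care is the finitary reduction above: one must verify that $\{0 \from \bullet_n\}$ truly depends only on $\bullet_1, \dots, \bullet_n$, despite those earlier particles' possible interactions with $\bullet_{n+1}, \bullet_{n+2}, \dots$ in the unrestricted process. The speed bound $|v| \le 1$ is exactly what rules out any influence of later-indexed particles, since none of them can overtake $\bullet_n$ once it begins moving left.
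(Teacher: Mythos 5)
Your proposal is correct and is essentially the paper's own argument: your events $E_N$ coincide with the paper's $\{0\from\bullet\}_{[x_1,x_N]}$, so $\P(E_N)$ is exactly the paper's $q_N$, and your lower-semicontinuity-plus-dichotomy step is a reformulation of the paper's identity $\{p : q=1\}=\bigcup_k\{p : q_k>1/\sqrt p-1\}$, a union of open sets since each $q_k$ is polynomial in $p$.
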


\begin{lemma}\label{lem:supercrit_open}
The set of supercritical parameters $\{p\in(\frac14,1)\st \theta(p)>0\}$ is open.
\end{lemma}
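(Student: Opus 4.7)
The plan is to show that $q(p)=\P_p(0\from\bullet)$ is upper semi-continuous on $(1/4,1)$; combined with the dichotomy of Proposition~\ref{prop:dichotomy}, this will suffice. Indeed, if $\theta(p_0)>0$ then $q(p_0)=\frac1{\sqrt{p_0}}-1<1$, so USC at $p_0$ gives $\limsup_{p\to p_0}q(p)\leq q(p_0)<1$; combined with the dichotomy and the fact that $\frac1{\sqrt p}-1$ is continuous in $p$, this forces $q(p)=\frac1{\sqrt p}-1<1$ throughout a neighborhood of $p_0$, whence $\theta(p)=(1-q(p))^2>0$ there.

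The starting observation is that, for every $k\geq 1$, the event $\{0\from\bullet_k\}$ is determined by $\bullet_1,\ldots,\bullet_k$ alone: a left-moving $\bullet_k$ cannot be overtaken by particles initially in $(x_k,\infty)$, which travel at speed at most $1$ and therefore cannot catch up, and its potential annihilators in $(0,x_k)$ are shielded from those right-side particles by $\bullet_k$'s own leftward trajectory, in the sense that any interaction between some $\bullet_j$ with $j>k$ and some $\bullet_i$ with $i<k$ would necessarily happen later than the time at which $\bullet_k$ itself reaches $x_i$. Consequently $\{A\leq n\}$ is a finitary event and $\P_p(A\leq n)$ is a polynomial in $p$. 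This already yields $q(p)=\sup_n\P_p(A\leq n)$ as an increasing limit of continuous functions, hence $q$ is LSC --- the direction relevant to Lemma~\ref{lem:subcrit_open}, but not what is needed here.

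For USC, I would construct, for each $n$, a finitary event $G_n\supseteq\{0\from\bullet\}$ with $\P_p(G_n)\searrow q(p)$, so that $q(p)=\inf_n\P_p(G_n)$ is USC as an infimum of continuous functions. The natural candidate is $G_n=\{A\leq n\}\cup H_n$, where $H_n$ is a finitary condition on $\bullet_1,\ldots,\bullet_n$ that is necessary for $\{0\from\bullet,A>n\}$ --- concretely, $H_n$ should hold when the restricted process on $(0,x_n]$ leaves a configuration of surviving right-moving and static particles that is ``permeable'' to an incoming left-mover from the right. The main obstacle is to design $H_n$ so that simultaneously (a) $H_n\supseteq\{0\from\bullet,A>n\}$, ensuring $G_n\supseteq\{0\from\bullet\}$, and (b) $\P_p(H_n\setminus\{A\leq n\})$ tends to $\P_p(\{0\from\bullet,A>n\})$ as $n\to\infty$, ensuring the infimum is exactly $q(p)$ and not a strict overestimate. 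This is presumably the ``more involved characterization'' referenced in the introduction; its construction must exploit the i.i.d.\ interdistance structure and a renewal-type estimate to relate the finite past configuration to the probability that future incoming particles cooperate to reach $0$, and this quantitative matching is the delicate part of the argument. Once such $G_n$ are in hand, continuity of $\P_p(G_n)$ in $p$ is immediate (polynomial in $p$), and USC of $q$ follows, concluding the proof as outlined above.
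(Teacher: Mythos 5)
Your top-level plan---prove $q$ is upper semi-continuous, then invoke the dichotomy---is logically sound, but the crux of the proof (constructing the finitary supersets $G_n$ with $\P_p(G_n)\searrow q(p)$) is left unresolved, and you explicitly flag it as ``the delicate part.'' That delicate part \emph{is} the lemma. Worse, the frame you set up is arguably the wrong one: the event $\{0\from\bullet\}$ is an increasing union of cylinder events (hence $q$ is LSC), but its complement $\{0\nfrom\bullet\}$ is a genuinely tail-like event --- no finite window of the configuration can certify that no future left-mover ever reaches $0$, so there is no obvious decreasing sequence of cylinder events $G_n\supseteq\{0\from\bullet\}$ whose probabilities converge to $q$. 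Your proposed $H_n$ (``permeability'' of the first-$n$-particle remainder) is necessary for $\{0\from\bullet,A>n\}$ but would also be satisfied by many configurations that never admit an incoming hitter, and there is no evident reason its excess probability vanishes as $n\to\infty$.

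The paper takes a structurally different route that sidesteps semicontinuity of $q$ altogether. It proves (Proposition~\ref{prop:characterization}) that $\theta(p)>0$ if and only if $\E[N_k]>0$ for some $k$, where $N_k$ is the surplus of surviving static over surviving left-moving particles in the system restricted to $k$ particles. Each $\E[N_k]$ is a polynomial in $p$, so $\{p\st\theta(p)>0\}=\bigcup_k\{p\st\E[N_k]>0\}$ is an open set by inspection. The substance is in Proposition~\ref{prop:characterization} itself: the forward direction uses ergodicity and integrability of the number of surviving left-movers; the reverse direction builds on an approximate superadditivity of $N$ (Lemma~\ref{lem:superadditivity}) together with a renewal/exploration argument and the law of large numbers. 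So the paper characterizes the supercritical \emph{set} directly by a countable family of polynomial inequalities, rather than trying to approximate the survival \emph{probability} $q$ from above. If you want to pursue your USC route, you would essentially have to re-derive Proposition~\ref{prop:characterization} and then further convert it into a statement about $q$; it is more economical to use it as the paper does.
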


\begin{proof}[{Proof of Proposition~\ref{prop:connectivity}}]
As a conclusion of the above lemmas, the set $S=\{p\in(\frac14,1)\st\theta(p)=0\}$ is both open and closed in $(\frac14,1)$. By connectivity of this interval, it follows that either $S=(\frac14,1)$ or $S=\emptyset$. Since we already know (cf.~\cite{sidoravicius-tournier}) that $S\subset(\frac14,\frac13)$, we deduce that $S=\emptyset$. 
\end{proof}

\begin{proof}[{Proof of Lemma~\ref{lem:subcrit_open}}]
We have $q=\limup_k q_k$ where, for all $k\in\N$, 
\[q_k=\P((0\leftarrow\bullet)_{[x_1,x_k]}),\]
which gives, using Proposition~\ref{prop:dichotomy},
\begin{align*}
\{p\in(\tfrac14,1)\st\theta(p)=0\}
	& =\{p\in(\tfrac14,1)\st q=1\}\\
	& =\Bigl\{p\in(\tfrac14,1)\st q>\frac1{\sqrt p}-1\Bigr\}\\
	& =\bigcup_{k\in\N}\Bigl\{p\in(\tfrac14,1)\st q_k>\frac1{\sqrt p}-1\Bigr\}.
\end{align*}
On the other hand, each $q_k$ depends only on a configuration of $k$ particles, hence by conditioning on the velocities of these particles we see that $q_k$ is a polynomial in $p$ and therefore is continuous. The lemma follows.
\end{proof}

\begin{proof}[{Proof of Lemma~\ref{lem:supercrit_open}}]
Using the notation $N_k$ from the next subsection, the upcoming Proposition~\ref{prop:characterization} gives
\[\bigl\{p\in(\tfrac14,1)\st\theta(p)>0\bigr\}=\bigcup_{k\in\N}\bigl\{p\in(\tfrac14,1)\st\E[N_k]>0\bigr\}, \]
so that the lemma follows by noticing that, as can be seen by conditioning on the velocities of the $k$ particles, the function $p\mapsto\E[N_k]$ is polynomial hence continuous.
\end{proof}

\subsection{Characterization of the supercritical phase}\label{sec:characterization}

While Lemma~\ref{lem:subcrit_open} relies on the simple monotone approximation $q=\limup_k q_k$, where for all $k\in\N$ the probabilities $q_k=\P((0\leftarrow\bullet)_{[x_1,x_k]})$ depend only on a configuration of $k$ particles, Lemma~\ref{lem:supercrit_open} relies on a formally similar but more involved characterization. This characterization is already alluded to in the first of the final remarks of~\cite{sidoravicius-tournier} as a way to numerically upper bound $\pc{}$. Given its importance in the present proof, we give it here a more thorough presentation, and show it is necessary and sufficient. 

For all $k\in\N$, consider a random configuration containing only the $k$ particles $\bullet_1,\ldots,\bullet_k$ (initially located at $x_1,\ldots,x_k$), and denote by $N_k$ the difference between the number of surviving static particles and the number of surviving left-going particles: letting $I_k=[x_1,x_k]$, this amounts to letting
\[N_k\defeq\sum_{i=1}^k(\indic_{\{\stay_i\}}-
\rlap{\phantom{11}\raisebox{1.5\height}
{\scalebox{.4}{\ensuremath{\leftarrow}}}}
\indic_{\{\bullet_i\}})\indic_{\{\bullet_i\text{ survives}\}_{I_k}}.\]

\begin{proposition}\label{prop:characterization}
For all $p\in(0,1)$, $\theta(p)>0$ $\Leftrightarrow$ $\exists k\ge1,\ \E[N_k]>0$. 
\end{proposition}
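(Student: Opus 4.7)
My plan is to prove both directions of Proposition~\ref{prop:characterization} through the asymptotics $\E[N_k]/k\to\theta(p)$, combined in the nontrivial direction with a super-additivity inequality. Write $\sigma_k:=\E[\tilde S^{(k)}]$ and $\lambda_k:=\E[\tilde L^{(k)}]$. The starting point is a causality observation: the space-time trajectory of a left-moving particle originating at $x_j$ can never be reached by any influence emanating from a particle initially at position $>x_j$, since any such influence also propagates leftward at unit speed but starts strictly to the right. Consequently both $q_k=\P((x_1\leftarrow\bullet)_{[x_1,x_k]})$ and the event $\{\bullet_j\text{ is a left-escapee in the restricted process}\}$ are monotone non-decreasing in the number of particles; hence $q_k\uparrow q$ and $\lambda_k=\sum_{j=1}^k p_j$, where $p_j:=\P(\bullet_j\text{ is a left-escapee in the restricted }j\text{-process})$ does not depend on the terminal index. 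Because a static $\bullet_j$ acts as a two-sided barrier decoupling the restricted dynamics on $[x_1,x_{j-1}]$ and $[x_{j+1},x_k]$,
\[
\sigma_k=p\sum_{j=1}^k (1-q_{j-1})(1-q_{k-j}),
\]
and Ces\`aro's theorem yields $\sigma_k/k\to p(1-q)^2=\theta(p)$.

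For direction $(\Rightarrow)$, if $\theta(p)>0$ then the full half-line process has a positive asymptotic density of surviving static particles, which almost surely blocks any left-moving ``test particle'' originating far to the right, so $p_j\to0$ and $\lambda_k=o(k)$; combining, $\E[N_k]/k\to\theta(p)>0$, hence $\E[N_k]>0$ for all sufficiently large~$k$.

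For direction $(\Leftarrow)$, my plan is to establish the super-additive inequality $\E[N_{m+n}]\ge\E[N_m]+\E[N_n]$ for all $m,n\ge1$. Granting this, Fekete's lemma together with the convergence $\E[N_k]/k\to\theta(p)$---which remains valid in the subcritical regime, where the polynomial decay of surviving-particle densities still forces $p_j\to0$ and hence $\lambda_k=o(k)$---gives $\E[N_k]/k\le\theta(p)$ for every $k$, so that $\E[N_k]>0$ for some $k$ forces $\theta(p)>0$.

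The super-additive step is the main technical obstacle, since the dynamics are not monotone: merging two consecutive restricted $k$-blocks can both destroy some static survivors (via a cross-boundary escapee from the opposite block) and create new ones (via an external particle killing an internal would-be killer). The plan for this step is to exploit left-right symmetry together with the causality bound to pair, in expectation, each static survivor lost on merging with a distinct escapee of the opposite block, so that the net deficit in $N$ is non-positive and the super-additive inequality holds.
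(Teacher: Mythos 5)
Your $(\Rightarrow)$ direction is sound and, though organized somewhat differently, lands in the same place as the paper's. The causality observation is correct, the barrier-decoupling identity $\sigma_k=p\sum_{j=1}^k(1-q_{j-1})(1-q_{k-j})$ holds (with $q_0=0$), and $\lambda_k=\sum_{j\le k}p_j=o(k)$ follows from $p_j\to0$ because a right-going particle is a.s.\ eventually annihilated on the half-line; together these give $\E[N_k]/k\to\theta(p)$ and hence $\E[N_k]>0$ for large $k$ when $\theta(p)>0$. The paper instead uses the cruder monotonicity bound $\E[\Nstay_k]\ge kp\,\theta(p)$ and the geometric bound $\E[\Ncome_k]\le q/(1-q)$, but the conclusion is the same.

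Your $(\Leftarrow)$ direction, however, has a fatal gap: the unconditional super-additivity $\E[N_{m+n}]\ge\E[N_m]+\E[N_n]$ is \emph{false}. Already at $m=n=1$, enumerating the nine velocity pairs of $\bullet_1,\bullet_2$ gives
\[
\E[N_2]=2p^2+p\,\pb-3\pb^2,\qquad 2\E[N_1]=3p-1,\qquad\text{so}\quad \E[N_2]-2\E[N_1]=\tfrac14(3p-1)(p-1),
\]
where $\pb=\frac{1-p}2$. This is strictly negative for every $p\in(\frac13,1)$; e.g.\ at $p=\frac12$, $\E[N_2]=\frac7{16}<\frac12=2\E[N_1]$. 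The obstruction is exactly the one you flag as the ``main technical obstacle'': merging two blocks lets a right-going escapee from the left block destroy a static survivor in the right block, and there is no pairing — not even in expectation — that offsets this deficit, because the dynamics are genuinely non-monotone; the symmetric ``gain'' events do not compensate. Consequently Fekete's lemma is unavailable, and indeed $\sup_k\E[N_k]/k$ need not equal $\theta(p)$: at $p=\frac12$, $\E[N_1]/1=\frac14>\theta(\tfrac12)=3-2\sqrt2\approx0.17$. The paper circumvents this by establishing only a \emph{conditional, pathwise} super-additivity, $N(1,l)\ge N(1,k)+N(k+1,l)$ valid on the event that no right-going particle survives in restriction to $[x_1,x_k]$ (Lemma~\ref{lem:superadditivity}), and then combining it with an exploration/renewal scheme and the strong law of large numbers to conclude $\theta(p)>0$ directly, rather than arguing through Fekete's lemma and the Ces\`aro limit.
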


\paragraph{\bf Remark.} The fact that $\E[N_1]=\frac12(3p-1)$ recovers (cf.~\cite{sidoravicius-tournier}) that $\theta(p)>0$ when $p>1/3$. The proof of this fact in~\cite{sidoravicius-tournier} is in fact the scheme for the general one given below. Considering $\E[N_2]$ gives the same condition, however $\E[N_3]=3p^3+7p^2\pb-\frac32p\pb^2-8\pb^3$ (where $\pb=\frac{1-p}2$) yields the value $0.32803$ from the remark in~\cite{sidoravicius-tournier}. As the proposition shows, pushing this method further would give arbitrarily good numerical approximations of $\pc{}$. Let us remind that, although such approximations are rendered pointless by Theorem~\ref{thm:main}, the \textit{existence} of this method still is a theoretical tool in the proof of the said theorem. 

\begin{proof}
\noindent{\it Direct implication.} 
Assume that $\theta(p)>0$. Let us decompose $N_k=\Nstay_k-\Ncome_k$, where $\Nstay_k$ and $\Ncome_k$ respectively denote the number of static and left-going particles among $\bullet_1,\ldots,\bullet_k$ that survive in restriction to $[x_1,x_k]$. 

For any integer $i$, the event $\{\stay_i\text{ survives}\}_I$ decreases with the interval $I$ (containing $x_i$). If indeed $\bullet_i$ is static and is annihilated by a particle inside an interval $I$, then introducing new particles outside $I$ can possibly change the side from which $\bullet_i$ is hit, but not the fact that this particle is hit. In particular, the number of static particles among $\bullet_1,\ldots,\bullet_k$ that survive in restriction to $[x_1,x_k]$ is larger than or equal to the number of such particles that survive without restriction, and \latin{a fortiori} to the number of such particles that survive when the initial locations are extended to the full line. 
Taking expectations gives, by shift invariance of the full line process,
\[\E[\Nstay_k]\ge\E_\R[\Nstay_k]= k \P_\R\bigl(\stay_0\text{ survives}\bigr)=kp\theta(p),\]
hence in particular $\E[\Nstay_k]\to+\infty$ as $k\to\infty$. 

On the other hand, $\E[\Ncome_k]$ is uniformly bounded in $k$. Indeed, $\Ncome_k$ clearly grows with $k$, and its limit $\Ncome_\infty=\limup_k \Ncome_k$ is the number of surviving left-going particles in $(0,\infty)$, and this number has geometric distribution with parameter $1-q>0$ (notice indeed that the configuration on the right of a surviving left-going particle is identically distributed to the configuration on $(0,\infty)$, up to translation) and therefore is integrable. 

We conclude that $\E[N_k]=\E[\Nstay_k]-\E[\Ncome_k]\ge kp\theta(p)-\frac q{1-q}\to +\infty$ as $k\to\infty$, hence $\E[N_k]>0$ for large~$k$. 

\vspace{\baselineskip}\noindent{\it Reverse implication.}
Assume now that $\E[N_k]>0$ for some $k\ge1$. 

For positive integers $i<j$, define $N(i,j)$ in the same way as $N_k$ except that only the particles $\bullet_i,\ldots,\bullet_j$ are considered instead of $\bullet_1,\ldots,\bullet_k$. With this notation, $N_k=N(1,k)$. This function $N$ satisfies ``almost'' a superadditivity property. 

\begin{lemma}\label{lem:superadditivity}
Let $k<l$ be positive integers. For any configuration $\omega$ which, in restriction to $[x_1,x_k]$, has no surviving right-going particle, we have
\[N(1,l)\ge N(1,k)+N(k+1,l).\]
\end{lemma}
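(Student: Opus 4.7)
The plan is to compare the merged dynamics on $[x_1,x_l]$ to a two-phase \emph{concatenation} in which each subsystem $[x_1,x_k]$ and $[x_{k+1},x_l]$ first evolves in isolation, and their survivors are then released (keeping their initial positions and velocities) to interact. I would show that the concatenation dynamics yields exactly $N_{\text{concat}}=N(1,k)+N(k+1,l)$, and then that the merged dynamics can only improve on the concatenation, i.e.\ $N(1,l)\ge N_{\text{concat}}$.

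The first claim uses a standard structural fact: in any isolated ballistic annihilation on an interval, the surviving particles listed in left-to-right order form a pattern $\come^\alpha\stay^\beta\go^\gamma$, because any pair in the ``wrong'' relative order would have collided before either could survive. Under the lemma's hypothesis, the left subsystem yields $\come^{a_L}\stay^{b_L}$ (no $\go$), giving $N(1,k)=b_L-a_L$, and the right subsystem yields $\come^{a_R}\stay^{b_R}\go^{c_R}$, giving $N(k+1,l)=b_R-a_R$. In the second phase of the concatenation, the only new collisions are of type $\stay\from\come$, produced by the $\come^{a_R}$ crossing into the region of the $\stay^{b_L}$: exactly $\min(a_R,b_L)$ such annihilations occur, and a direct computation of (statics) minus (left-goers) among the resulting survivors gives $N_{\text{concat}}=(b_L-a_L)+(b_R-a_R)$.

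The main step is to prove $N(1,l)\ge N_{\text{concat}}$. For this I would introduce $D:=(\#\text{annihilated statics})-(\#\text{annihilated left-goers})$, so that $N=(\#\text{initial statics})-(\#\text{initial left-goers})-D$ and, since the initial counts split additively over the two subsystems, the inequality becomes $D_{\text{merged}}\le D_{\text{iso L}}+D_{\text{iso R}}$. Each collision type contributes $\Delta D\in\{-1,0,+1\}$: $\go\to\stay$ and triples with a surviving $\come$ contribute $+1$; $\go\collide\come$ contributes $-1$; $\stay\from\come$ and triples with a surviving $\go$ contribute $0$. The hypothesis forces every left-subsystem $\go$ to be annihilated in iso L, each contributing $\pm1$ to $D_{\text{iso L}}$. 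Going from the concatenation to the merged dynamics, the key effect of cross-interactions is to ``reroute'' such annihilations: a left-subsystem $\go$ may be intercepted by a right-subsystem $\come$ before reaching its iso L partner, replacing a $\Delta D=+1$ event by a $\Delta D=-1$ event while simultaneously eliminating the $\come$'s iso R contribution (which was $0$ or $-1$); each such rerouting is favorable to the target inequality.

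The main obstacle is making this bookkeeping rigorous, because a single rerouting can cascade: a $\stay$ saved in one rerouting may be struck by a different $\come$ later, the $\come$ used up in the rerouting might itself have had its own event in iso R, and so on. I would address this by processing the merged collisions in chronological order and inductively maintaining an explicit coupling with the events of iso L and iso R, tracking that the partial sums of $\Delta D$ remain dominated by $D_{\text{iso L}}+D_{\text{iso R}}$. A cleaner alternative, should it go through, is an induction on $l-k$ in which one adds right-subsystem particles one at a time: this reduces the cascade to be analyzed to a single incoming particle, whose effect on each side of the inequality can be compared term by term.
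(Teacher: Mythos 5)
Your overall plan is in the same spirit as the paper's proof: compare the merged dynamics on $[x_1,x_l]$ to the two isolated runs, and bound the net effect of cross-boundary interactions. The concatenation intermediary and the $D$-bookkeeping reformulation are clean framing devices (and your verification that $N_{\text{concat}}=N(1,k)+N(k+1,l)$ is correct), but neither is where the real work lies, and that is where your proposal stops: the cascade-tracking step, which you correctly identify as ``the main obstacle,'' is left as two alternative plans rather than being carried out. That step is precisely the content of the paper's proof.

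Two more specific comments. First, your description of the rerouting has a small but telling inaccuracy: you describe the primitive move as ``a left-subsystem $\go$ intercepted by a right-subsystem $\come$ before reaching its iso L partner.'' But an incoming $\come$ cannot reach such a $\go$ before passing the location of the static that $\go$ was headed for (the iso-L partner cannot be a $\come$, since that $\come$ would be closer), so the first thing an incoming $\come$ hits is always a static. This is why the paper's cascade starts from the incoming $\come$ and classifies what it hits: a \emph{surviving} static of iso L (net $0$), or an \emph{annihilated} static of iso L, which frees a $\go$, which in turn either escapes ($+1$), hits a surviving $\come$ ($+2$), hits a surviving static ($0$), or hits an annihilated static of iso R, freeing a $\come$ and recursing. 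Triple collisions are checked to fall into the same cases. Each chain strictly increases or preserves the left-hand side relative to the right, and the chains for distinct incoming $\come$ do not interfere because each consumes the particles it touches. Second, your alternative plan of induction on $l-k$ is not obviously compatible with the hypothesis, which constrains the \emph{left} block $[x_1,x_k]$; adding particles on the right one at a time does not preserve a usable inductive invariant on the left block without further argument. Reorganizing your cascade bookkeeping around the paper's ``follow each incoming $\come$ and its unleashed peers'' scheme is the way to make your Step~2 rigorous.
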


\begin{proof}[{Proof of Lemma~\ref{lem:superadditivity}}]
For all $k$, let us denote by $\Ngo_k$ the number of right-going particles among $\bullet_1,\ldots,\bullet_k$ that survive in restriction to $[x_1,x_k]$. Observe that the assumption in the statement is that $\Ngo_k=0$. 
Waiving this assumption, we shall prove, by induction on $l-k$, that a slightly more general statement holds: for $0\le k\le l$, \[N(1,l)\geq N(1,k)+N(k+1,l)-\Ngo_k.\] 
Note that we allow the cases $k=0$ (i.e., the interval $[x_1,x_k]$ is empty) and $l=k$ (i.e., the interval $[x_{k+1},x_l]$ is empty), for both of which the result is trivially true.

Suppose there is at least one rightmoving particle among $\bullet_1,\ldots,\bullet_k$ that survives in restriction to $[x_1,x_k]$, and consider the rightmost such particle. If it also survives in restriction to $[x_1,x_l]$ then necessarily all particles among $\bullet_{k+1},\ldots\bullet_l$ that survive in restriction to $[x_{k+1},x_l]$ were right-moving and so we have $N(k+1,l)=0$ and $N(1,l)=N(1,k)$. If not, suppose it annihilates with $\bullet_j$. There are two possibilities: either $\bullet_j$ was the leftmost surviving particle among $\bullet_{k+1},\ldots,\bullet_l$ in restriction to $[x_{k+1},x_l]$ or it was stationary and annihilated by a leftmoving particle on $[x_{k+1},x_l]$. In the former case we have $N(1,j)=N(1,k)$, $\Ngo_j=\Ngo_k-1$ and $N(j+1,l)=N(k+1,l)\pm1$ depending on whether $\bullet_j$ was leftmoving or stationary. In the latter we have $N(1,j)=N(1,k)$, $\Ngo_j=\Ngo_k-1$ and $N(j+1,l)=N(k+1,l)-1$, since the particle which annihilated $\bullet_j$ on $[x_{k+1},x_l]$ survives on $[x_{j+1},x_l]$. In either case the result follows since, by induction, $N(1,l)\ge N(1,j)+N(j+1,l)-\Ngo_j$.

Next, suppose otherwise, i.e.~$\Ngo_k=0$: there is no rightmoving particle among $\bullet_1,\ldots,\bullet_k$ that survives in restriction to $[x_1,x_k]$. If there are no surviving leftmoving particles on $[x_{k+1},x_l]$ then the two sets of particles $\bullet_1,\ldots,\bullet_k$ and $\bullet_{k+1},\ldots,\bullet_l$ do not interact and we have equality. So assume there is at least one surviving leftmoving particle on $[x_{k+1},x_l]$ and consider the leftmost, denoted by $\bullet_j$. If it also survives on $[x_1,x_l]$ then again there are no interactions and we have equality. Otherwise it annihilates with a particle, which must be stationary and was either the rightmost surviving particle on $[x_1,x_k]$ or further right than any such surviving particle. 
In the former case we have $N(1,j)=N(1,k)-1$, $N(j+1,l)=N(k+1,l)+1$ and $\Ngo_j=0$, and in the latter we have $N(1,j)=N(1,k)$, $N(j+1,l)=N(k+1,l)+1$ and $\Ngo_j=1$, so in either case the result follows since, by induction, $N(1,l)\ge N(1,j)+N(j+1,l)-\Ngo_j$. 
\end{proof}

We shall progressively explore the configuration, starting from 0 and going to the right, by repeating the following two steps: first, discover the next $k$ particles, and then discover the least necessary number of particles until there is no surviving right-going particle in the whole discovered region. We will denote by $K_0=0, K_1, K_2,\ldots$, the number of particles discovered in total after each iteration, and by $\Nt^{(1)}(=N_k),\Nt^{(2)},\ldots$ the quantity computed analogously to $N_k$ but on the newly discovered block of $k$ particles at each iteration, i.e., for all $n$, $\Nt^{(n+1)}=N(K_n+1,K_n+k)$. Let us explain the first iteration in some more detail.

We start by considering the first $k$ particles. Let $\Nt^{(1)}=N(1,k)$. If, in the configuration restricted to $[x_1,x_k]$, no right-going particle survives, then we let $K_1=k$. Otherwise, let $\tau_0$ denote the index of the leftmost surviving right-going particle, and appeal for instance to~\cite[Lemma 3.3]{sidoravicius-tournier} to justify the existence of a minimal $\gamma_1$ such that the event $\{\go_{\tau_0}\tto\bullet_{\gamma_1}\}_{[x_{\tau_0},x_{\gamma_1}]}$ happens, and let $K_1=\gamma_1$. By definition we have that, in both cases, in restriction to $[x_1,x_{K_1}]$, there is no surviving right-going particle and $\Nt^{(1)}=N(1,K_1)$. We then keep iterating this construction: define $\Nt^{(2)}=N(K_1+1,K_1+k)$, and keep exploring on the right of $\bullet_{K_1+k}$ until no surviving right-going particle remains, define $K_2$ to be the index that was reached, and so on. By this construction, the random variables $\Nt^{(n)}$ are i.i.d.\ with same distribution as $N_k$, and for all $n$ we have $N(1,K_n+k)=N(1,K_{n+1})$ and there is no surviving right-going particle in restriction to $[x_1,x_{K_{n+1}}]$. Thus, by repeatedly using the lemma, we have for all $n$, 
\[N(1,K_n)\ge \Nt^{(1)}+\cdots+\Nt^{(n)}.\]
However, by the assumption and the law of large numbers, with positive probability $\Nt^{(2)}+\cdots+\Nt^{(n)}>0$ for all $n\ge2$. Therefore, still with positive probability, it may be that the first $k$ particles are static (hence $\Nt^{(1)}=k$) and that $\Nt^{(1)}+\cdots+\Nt^{(n)}> k$ for all $n\ge2$, so that $N(1,K_n)> k$ for all $n\ge2$. This event ensures that 0 is never hit: indeed after the $n$-th iteration of the exploration (for $n\ge2$) there are at least $k+1$ surviving static particles due to the definition of the event, but at most $k$ of them can be annihilated by the particles discovered between $K_n$ and $K_{n+1}$, hence by induction the first static particle survives forever and prevents 0 from being hit. Thus $\theta(p)>0$.
\end{proof}

\section{Proof of Theorem~\ref{thm:universality}: Universality of the law of~$A$}\label{sec:proof_universality}

Remember that $A$ is the index of the first particle, on $\N$, that visits $0$. We wish to prove that the law of $A$ does not depend on $m$ and furthermore, that for all $x\in[-1,1]$, the generating series $w=f(x)=\E[x^A\indic_{\{A<\infty\}}]$ solves the equation
\begin{equation*}
px w^4-(1+2p)xw^2+2w-(1-p)x=0. \tag{\ref*{eq:generating_series}}
\end{equation*}

Since $\P(A=\infty)=1-\sum_{n\in\mathbb N}\P(A=n)$, it is sufficient to show $\P(A=n)$ is independent of the distribution $m$ of interdistances for every $n\in\mathbb N$.
Let, for all $n\in\N$, 
\begin{gather*}
p_n  = \P(A=n),\quad\alpha_n = \P((A=n)\wedge(\stay_1)),\quad\beta_n  =\P((A=n)\wedge(\go_{1}\tto\stay)),\\
\gamma_n  =\P((A=n)\wedge(\go_{1}\collide\come)),\quad\text{and}\quad\delta_n =\P(\go_{1}\collide\come_n).
\end{gather*}
We will prove by induction on $n$ that not only $p_n$ but also $\alpha_n,\beta_n,\gamma_n$ and $\delta_n$ are each independent of the distribution of inter-bullet distances; the proof will also provide recurrence relations from which~\eqref{eq:generating_series} will follow. For $n=1$, we have $p_1=\frac{1-p}2$, and $\alpha_1=\beta_1=\gamma_1=\delta_1=0$. Let $n\in\N$ and assume that the previous property holds up to the value $n-1$. 

First, conditional on the event that $\bullet_1$ is static, $A=n$ if and only if there is some $1<k<n$ such that $\bullet_k$ is the first particle to reach $x_1$ from the right and $\bullet_n$ 
is the first to reach $x_k$ from the right. This happens with a probability equal to $\P(A=k-1)\P(A=n-k)$, which by induction does not dependent on the distribution $m$ for each $k$. Thus 
\begin{equation}\label{eq:alpha}
\alpha_n=p\sum_{1<k<n}p_{k-1}p_{n-k}
\end{equation}
does not depend on $m$. 

Secondly, observe that $\{A=n\}\cap\{\go_{1}\tto\stay_k\}$ occurs if and only if $\bullet_k$ is static, the first particle to reach $x_k$ from the left is $\bullet_1$, 
the first particle to reach $x_k$ from the right is $\bullet_n$, and either $x_k-x_1<x_n-x_k$, or jointly $x_k-x_1=x_n-x_k$ and $s_k=-1$. 
Now, for any configuration~$\omega$,
\[\omega\in\{\stay_k\}\cap\{\go_{1}\fto x_k\}_{(0,x_k)}\cap\{x_k\ffrom\come_n\}_{(x_k,\infty)}\cap\{ x_k-x_1< x_n-x_k\}\] 
if and only if 
\[\rev_n(\omega)\in\{\stay_{k'}\}\cap\{\go_{1}\fto x_{k'}\}_{(0,x_{k'})}\cap\{x_{k'}\ffrom\come_n\}_{(x_{k'},\infty)}\cap\{ x_{k'}-x_1> x_n-x_{k'}\},\]
where $k'=n+1-k$, and similarly
\[\omega\in\{\stay_k\}\cap\{\go_{1}\fto x_k\}_{(0,x_k)}\cap\{x_k\ffrom\come_n\}_{(x_k,\infty)}\cap\{ x_k-x_1= x_n-x_k\}\cap\{s_k=-1\}\] 
if and only if 
\begin{align*}
\rev_n(\omega)\in\{\stay_{k'}\}\cap\{\go_{1}\fto x_{k'}\}_{(0,x_{k'})}\cap\{x_{k'}\ffrom\come_n\}_{(x_{k'},\infty)}&\cap\{ x_{k'}-x_1> x_n-x_{k'}\}\\&\cap\{s_{k'}=+1\}.
\end{align*}

Thus, since $\P$ is invariant under $\rev_n$ (cf.\ Proof of Lemma~\ref{flip}), and $k\mapsto k'$ is a permutation of $\{2,\ldots,n-1\}$, we have
\begin{align*}
\lefteqn{\P((A=n)\wedge(\go_{1}\tto\stay))}\\
	& =\sum_{1<k<n}\P((\stay_k)\wedge(\go_{1}\fto x_k)_{(0,x_k)}\wedge(x_k\ffrom\come_n)_{(x_k,\infty)}\wedge(x_k-x_1<x_n-x_k))\\
	 &+\P((\stay_k)\wedge(\go_{1}\fto x_k)_{(0,x_k)}\wedge(x_k\ffrom\come_n)_{(x_k,\infty)}\wedge(x_k-x_1=x_n-x_k)\wedge(s_k=-1))\\
	& =\sum_{1<k<n}\P((\stay_{k})\wedge(\go_{1}\fto x_{k})_{(0,x_{k})}\wedge(x_{k}\ffrom\come_n)_{(x_{k},\infty)}\wedge(x_{k}-x_1>x_n-x_{k}))\\
	 &+\P((\stay_{k})\wedge(\go_{1}\fto x_{k})_{(0,x_{k})}\wedge(x_{k}\ffrom\come_n)_{(x_{k},\infty)}\wedge(x_{k}-x_1=x_n-x_{k})\wedge(s_{k}=+1))
\end{align*}
hence summing the above two equalities yields exactly
\[2\,\P((A=n)\wedge(\go_{1}\tto\stay))=\sum_{1<k<n}\P\bigl((\stay_k)\wedge(\go_{1}\fto x_k)_{(0,x_k)}\wedge(x_k\ffrom\come_n)_{(x_k,\infty)}\bigr).\]
The events $\{\stay_k\}$, $\{\go_{1}\fto x_k\}_{(0,x_k)}$ and $\{x_k\ffrom\come_n\}_{(x_k,\infty)}$ are independent, and have probabilities
$p$, $\P(A=k-1)$ and $\P(A=n-k)$ respectively, where the expression for the second probability comes from the invariance of $\P$ under $\rev_k$, thus
\[\P((A=n)\wedge(\go_{1}\tto\stay))=\frac p2\sum_{1<k<n}\P(A=k-1)\P(A=n-k),\]
i.e.
\begin{equation}\label{eq:beta}
\beta_n=\frac p2\sum_{1<k<n}p_{k-1}p_{n-k} = \frac12\alpha_n.
\end{equation}
By induction, the terms of this sum do not depend on $m$, so $\beta_n$ doesn't either.

Then, $\P((A=n)\wedge(\go_{1}\collide\come))=\sum_{k=2}^{n-1}\P(\go_{1}\collide\come_k)\P(A=n-k)$, i.e.
\begin{equation}\label{eq:gamma}
\gamma_n=\sum_{1<k<n}\delta_kp_{n-k},
\end{equation}
and by induction the terms of this sum do not depend on $m$. 

Finally, let us consider $\delta_n=\P(\go_{1}\collide\come_n)$. 
If $\{\go_{1}\collide\come_n\}$ occurs then on the interval $(x_1,\infty)$ the first particle to cross $x_1$ is $\bullet_n$, which happens with probability $\P(A=n-1)$. 
However, there are some arrangements where the latter event and $\{\go_{1}\}$ both occur, but $\{\go_{1}\collide\come_n\}$ doesn't. 
In fact, these arrangements are precisely those for which there is some $k<n$ such that $\{A=k\}\cap\{\go_{1}\tto\stay\}$ occurs and $\bullet_n$ is the first to cross $x_k$ from the right. This means that
\[\P(\go_{1}\collide\come_n)=\frac{1-p}2\P(A=n-1)-\sum_{1<k<n}\P((A=k)\wedge(\go_{1}\tto\stay))\P(A=n-k),\]
i.e.
\begin{equation}\label{eq:delta}
\delta_n=\frac{1-p}2p_{n-1}-\sum_{1<k<n}\beta_kp_{n-k},
\end{equation}
and by the induction hypothesis all terms of this sum are independent of the distribution $m$ hence the same holds for $\delta_n$. Since $p_n=\alpha_n+\beta_n+\gamma_n$ when $n\ge2$, this concludes the induction. 

In order to study $f:x\mapsto\sum_{n\ge0}p_n x^n$, let us also define the generating series
\begin{align*}
A:x\mapsto \sum_{n=0}^\infty \alpha_n x^n, \quad
B:x\mapsto  \sum_{n=0}^\infty \beta_n x^n, \quad
C:x\mapsto  \sum_{n=0}^\infty \gamma_n x^n, \quad
D:x\mapsto  \sum_{n=0}^\infty \delta_n x^n.
\end{align*}
Then the previous recurrence relations~\eqref{eq:alpha},\eqref{eq:beta},\eqref{eq:gamma},\eqref{eq:delta} imply respectively
\begin{gather*}
A(x)=px f(x)^2,\qquad B(x)=\frac12A(x),\\
C(x)=D(x)f(x),\quad\text{and } D(x)=\frac{1-p}2xf(x)-B(x)f(x),
\end{gather*}
and since $p_n=\alpha_n+\beta_n+\gamma_n$ when $n\ge2$ and $p_1=\frac{1-p}2$, we conclude that
\begin{align*}
f(x)& =\frac{1-p}2x+A(x)+B(x)+C(x) \\
	& = \frac{1-p}2x+\frac32pxf(x)^2+\frac{1-p}2xf(x)^2-\frac12 pxf(x)^4,
\end{align*}

hence the advertised formula. 

\begin{remark}
We may observe that Formula~\eqref{eq:generating_series} contains sufficient information to imply Theorem~\ref{thm:main} in a more purely algebraic way (see also the end of the introduction), circumventing the topological considerations of Section~\ref{sec:topological}, even though the computations might be hardly tractable in practice. 

The key remark is that $f$ is the only analytic function on the closed unit disk that satisfies~\eqref{eq:generating_series} and $f(0)=0$. Indeed these properties hold for $f$, and the implicit function theorem implies local uniqueness, hence \latin{a fortiori} uniqueness on the disk. Thus, these properties characterize $f$. In particular, they entail the value of $q=f(1)$. One could thus \emph{in principle} deduce the value of $q$ from~\eqref{eq:generating_series}, for any $p\in(0,1)$, without appealing to any \latin{a priori} regularity of $q$. The practical computations, on the other hand, using the expressions of the solutions of quartic equations by radicals, seem to be particularly tedious. 
\end{remark}

\section{Proof of Proposition~\ref{pro:skyline}: Law of the skyline}\label{sec:skyline}

We first prove \ref{p:s:a}). For all $k\ge1$, introduce the $\sigma$-algebra
\[\Fr_k=\sigma((L_i,R_i,\Sigma_i)\st 0\le i\le k).\]
Denote by $S$ the event $\{\bullet\nto\,\stay_0\nfrom\bullet\}$. Let $k\ge1$. 
Let $(l_1,r_1,\sigma_1),\ldots,(l_k,r_k,\sigma_k)$ be any element in the support of the  random variable $(L_1,R_1,\Sigma_1),\ldots,(L_k,R_k,\Sigma_k)$ under $\P_\R(\cdot\s S)$. 

The event $S\cap\{\forall i=1,\ldots,k,\, (L_i,R_i,\Sigma_i)=(l_i,r_i,\sigma_i)\}$ can be decomposed into $\{\bullet\nto\stay_0\}\cap S_k\cap\{x_{r_k}\nfrom\bullet\}$, where $S_k$ is an event that depends \emph{only} on the configuration on $[x_1,x_{r_k}]$. Note in particular that the event $S_k$ implies that no particle among $\bullet_1,\ldots,\bullet_{r_k}$ leaves the interval $[x_1,x_{r_k}]$. 

Then the event $\{ \Sigma_{k+1}=\uparrow \}\cap S\cap\{\forall i=1,\ldots,k,\, (L_i,R_i,\Sigma_i)=(l_i,r_i,\sigma_i)\}$ happens if, and only if the particle at $0$ is static and not hit from the left, the particle at $L_{k+1}(=R_k+1)$ is static and not hit from the right, and the event $S_k$ happens. Thus, by independence and translation invariance properties of the process,
\begin{align*}
\lefteqn{\P_\R((\Sigma_{k+1}=\uparrow)\wedge S\wedge(\forall i=1,\ldots,k,\, (L_i,R_i,\Sigma_i)=(l_i,r_i,\sigma_i)))}\\
& = \P_\R((\bullet\nto\stay_0)_{(-\infty,0]}\wedge S_k\wedge(\stay_{r_k+1})\wedge(x_{r_k+1}\nfrom\bullet)_{[x_{r_k+1},+\infty)})\\
& = \P_\R((\bullet\nto\stay_0)_{(-\infty,0]})\wedge S_k)\,p\,\P((x_{r_k+1}\nfrom\bullet)_{[x_{r_k+1},+\infty)})\\
& = p \P_\R((\bullet\nto\stay_0)_{(-\infty,0]})\wedge S_k\wedge(x_{r_k}\nfrom\bullet)_{[x_{r_k},+\infty)})\\
& = p\P_\R(S\wedge(\forall i=1,\ldots,k,\,(L_i,R_i,\Sigma_i)=(l_i,r_i,\sigma_i))).
\end{align*}
Let $n\ge1$. In the same way, for $\{\Sigma_{k+1}=\nearrow\nwarrow,\ R_{k+1}-L_{k+1}=n\}$, letting $l_{k+1}=r_k+1$ and $r_{k+1}=l_{k+1}+n$, one has
\begin{align*}
\lefteqn{\P_\R((\Sigma_{k+1}=\nearrow\nwarrow)\wedge S\wedge(\forall i=1,\ldots,k+1,\, (L_i,R_i,\Sigma_i)=(l_i,r_i,\sigma_i))}\\
& = \P_\R((\bullet\nto\stay_0)_{(-\infty,0]}\wedge S_k\wedge(\go_{l_{k+1}}\collide\come_{r_{k+1}})\wedge(x_{r_{k+1}+1}\nfrom\bullet)_{[x_{r_{k+1}+1},+\infty)})\\
& = \P_\R((\bullet\nto\stay_0)_{(-\infty,0]})\wedge S_k)\P_\R(\go_{l_{k+1}}\collide\come_{r_{k+1}})\P_\R((x_{r_{k+1}+1}\nfrom\bullet)_{[x_{r_{k+1}+1},+\infty)})\\
& = \P(\go_1\collide\come_{n+1}) \P_\R((\bullet\nto\stay_0)_{(-\infty,0]})\wedge S_k\wedge(x_{r_k}\nfrom\bullet)_{[x_{r_k},+\infty)})\\
& = \P(\go_1\collide\come_{n+1}) \P_\R(S\wedge(\forall i=1,\ldots,k,\,(L_i,R_i,\Sigma_i)=(l_i,r_i,\sigma_i))),
\end{align*}
and similarly for the two other cases.  
Since this holds for all $(l_i,r_i,\sigma_i)$, $i=1,\ldots,k$, it follows that $\{\Sigma_{k+1}=\uparrow\}$ and $\{\Sigma_{k+1}=\nearrow\nwarrow, L_{k+1}-R_{k+1}=n\}$ are independent of $\Fr_k$ under $\P_R(\cdot\s S)$ and have respective probabilities $p$ and $\P(\go_1\collide\come_{n+1})$. 
Note that these events belong to $\Fr_{k+1}$. Since this holds for all $k\ge1$, the law of $(\Sigma_k,\Delta_k)_{k\ge1}$ follows, hence the statement of the proposition for positive $k$. The identity $\P(\Sigma=\nearrow\nwarrow)=\P(\go_1\collide\come)$ is obtained by summation over $n$, and the explicit value $(1-p)/2-r-s=\sqrt p-p$ was obtained for instance in the course of the proof of Lemma~\ref{lem:identity}.

Finally, given $S$, the particles on $(0,+\infty)$ and $(-\infty,0)$ are independent and have distributions symmetric to each other, hence the conclusion.

The statement of \ref{p:s:b}) then follows at once from the distribution of $\Sigma$. 

\section{Asymptotics of densities}\label{sec:asymptotics}

The implicit equation~\eqref{eq:generating_series} enables us, by analytic combinatorial methods, to compute asymptotics of the distribution of $A$, the index of the leftmost particle to ever visit 0 (Theorem~\ref{thm:asymptotics}). We shall then deduce asymptotics of the density of surviving particles as time passes (Theorem~\ref{thm:asympt_density}). The transition from the law of $A$ to the law of the lifetime of a static particle, i.e.\ its time of first collision, either from right or left, assumes however some control of the distance between particles (Lemmas~\ref{lem:discrete_continuous},\ref{lem:discrete_continuous_super}). 

Let us first state the result on $A$, then deduce its consequence, and finally return to the proof of this result in the last part of this section.

\begin{theorem}\label{thm:asymptotics}
We have
\[\E[A]=\infty\qquad\text{if $p\le1/4$,}\]
and
\[\E[A\mid A<\infty]=\frac{\sqrt p}{(2\sqrt p-1)(1-2\sqrt p+2p)}\qquad\text{if $p>1/4$.}\]
Also, 
\[\P(A=n)\equivalent{\substack{n\to\infty\\n\text{ odd}}}\begin{cases}
\frac{\sqrt 2}{\sqrt{\pi(1-4p)}}n^{-3/2}	& \text{if $p<1/4$,}\\
\frac{2^{4/3}}{3\Gamma(2/3)}n^{-4/3}	& \text{if $p=1/4$,}\\
\frac13\sqrt{\frac{2(8p+1)(1-p)}{\pi p(4p-1)}} R^{-n}n^{-3/2}	&\text{if $p>1/4$,}\\
\end{cases}\]
where $R=R(p)=\frac3{8p+1}\sqrt{\frac {3p}{1-p}}$,
and the analogous quantity for a right-going particle at zero satisfies
\[\P_\R(\go_{0}\collide\come_n)
\equivalent{\substack{n\to\infty\\n\text{ odd}}}
\begin{cases}
\frac{\sqrt{1-4p}}{\sqrt{2\pi}}n^{-3/2} & \text{if $p<1/4$,}\\
\frac1{2^{1/3}\Gamma(1/3)}n^{-5/3}	&\text{if $p=1/4$}\\
c R^{-n}n^{-5/2}	& \text{if $p>1/4$,}
\end{cases}
\]
for some $c=c(p)>0$.
\end{theorem}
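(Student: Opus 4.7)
The plan is to apply singularity analysis (in the style of Flajolet--Odlyzko) to the algebraic generating function $f$ defined by the quartic equation~\eqref{eq:generating_series} of Theorem~\ref{thm:universality}. Writing $P(w,x) = pxw^4-(1+2p)xw^2+2w-(1-p)x$, the dominant singularities of $f$ are determined by the smallest positive $x_s$ at which $P(f(x_s),x_s)=0$ and $\partial_w P(f(x_s),x_s)=0$ hold simultaneously. The symmetry $P(-w,-x)=-P(w,x)$, combined with the uniqueness of the analytic branch with $f(0)=0$, forces $f$ to be an odd function, so $p_n=0$ for even $n$; this explains the ``$n$ odd'' in the statement and means that for odd $n$ we must add the contributions of the two conjugate dominant singularities at $\pm x_s$.

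First I would locate $x_s$ in each regime by computing successive $w$-derivatives of $P$ at the natural candidate. For $p\le 1/4$ the point $(w,x)=(1,1)$ lies on both $P=0$ and $\partial_w P=0$; a direct computation gives $\partial_w^2 P(1,1)=8p-2$, non-zero for $p<1/4$ but vanishing at $p=1/4$, with $\partial_w^3 P(1,1)=6p\ne 0$. Newton--Puiseux then provides a local expansion $f(x)=1-C(p)(1-x)^{1/2}+O(1-x)$ in the subcritical regime and $f(x)=1-C'(p)(1-x)^{1/3}+\cdots$ at criticality, with $C(p)$ and $C'(p)$ recovered from ratios of partial derivatives of $P$ at $(1,1)$ (the critical coefficient in particular comes out to $2^{1/3}$, accounting for the $2^{4/3}$ in the theorem after summing both singular contributions). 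The transfer theorems $[x^n](1-x)^{1/2}\sim -n^{-3/2}/(2\sqrt\pi)$ and $[x^n](1-x)^{1/3}\sim -n^{-4/3}/\Gamma(-1/3)$ then deliver the announced $n^{-3/2}$ and $n^{-4/3}$ rates with their explicit constants.

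For $p>1/4$ one computes $\partial_w P(q,1)=2(q-1)^2/(q+1)\ne 0$, so $f$ extends analytically past $x=1$ and the singularity $x_s=:R$ must satisfy $R>1$. Eliminating $w$ from $P(w,R)=\partial_w P(w,R)=0$ (through the discriminant of $P$ in $w$) yields $w_s=1/\sqrt p-1$ and $R=\frac{3}{8p+1}\sqrt{3p/(1-p)}$; since $\partial_w^2 P(w_s,R)\ne 0$ one has again a square-root branch, producing the $R^{-n}n^{-3/2}$ asymptotic with the claimed prefactor. The moment formula $\E[A\mid A<\infty]=f'(1)/q$ follows from implicit differentiation of $P(f(x),x)=0$ at $x=1$, giving $f'(1)=-\partial_x P(q,1)/\partial_w P(q,1)$; simplification using the identity $p(q+1)^2=1$ then yields a closed form in $p$, while in the subcritical regime the square-root singularity at $x=1$ forces $f'(1^-)=+\infty$.

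Finally, for $\P_\R(\go_0\collide\come_n)=\delta_n$ I would use the identity $D(x)=\tfrac{xf(x)}{2}\bigl((1-p)-pf(x)^2\bigr)$ established in the proof of Theorem~\ref{thm:universality}, and compose singular expansions. The key observation is that $\partial_f D|_{(f,x)=(1,1)}=\tfrac12(1-4p)$ is non-zero off criticality, so $\delta_n$ inherits the same singular exponent as $p_n$ (with prefactors obtained by multiplying with $\partial_f D$), but vanishes precisely at $p=1/4$. At criticality this forces the singular part of $D$ to begin at order $(f-1)^2\sim(1-x)^{2/3}$, producing the exponent $n^{-5/3}$ via $[x^n](1-x)^{2/3}\sim -n^{-5/3}/\Gamma(-2/3)$. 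The main obstacle I anticipate is not the local Puiseux expansion but the verification of the analytic continuation required by Flajolet--Odlyzko: one must extend $f$ to a $\Delta$-domain at $x_s$ (and, by oddness, at $-x_s$), and rule out any third singularity on the critical circle. This is standard for algebraic functions but requires genuine care at the critical value $p=1/4$, where the triple root makes the branch structure more delicate.
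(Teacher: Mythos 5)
Your overall strategy -- singularity analysis of the algebraic curve $P(w,x)=0$, Newton--Puiseux expansions at the branch points, transfer theorems, plus composing the singular expansion of $f$ into $D$ -- is exactly the paper's approach, and the structure (distinguishing $p<1/4$, $p=1/4$, $p>1/4$ by whether $(1,1)$ or $(\pm R,\cdot)$ are singularities of $f$, verifying $\Delta$-analyticity, exploiting oddness of $f$ for the "$n$ odd'' restriction, differentiating the implicit equation at $x=1$ for the mean) is all correct. However, two substantive computational issues would derail the supercritical asymptotics.

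First, the value of $f$ at the singularity $x=R$ is not $1/\sqrt p-1$. Solving $P=\partial_w P=0$ gives either $(w,x)=(\pm1,\pm1)$ or $(w,x)=\bigl(\pm\sqrt{(1-p)/(3p)},\pm R\bigr)$; in particular $f(R)=\sqrt{(1-p)/(3p)}$, which coincides with $q=1/\sqrt p-1$ only at $p=1/4$. (The point $(q,1)$ is precisely where $\partial_w P\ne0$, as you yourself show, so it is regular.) Using $q$ in the Puiseux expansion at $R$ would yield the wrong prefactor.

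Second, and more seriously, your argument for the $\delta_n$ exponent when $p>1/4$ fails. You evaluate $\partial_f D=\tfrac12(1-4p)$ at $(f,x)=(1,1)$ and infer that away from criticality $\delta_n$ inherits the exponent of $p_n$, with $n^{-5/3}$ appearing only at $p=1/4$. But for $p>1/4$ the dominant singularity is at $(f,x)=\bigl(\sqrt{(1-p)/(3p)},R\bigr)$, not at $(1,1)$, and a direct check of $\partial_w\bigl[\tfrac{xw}{2}((1-p)-pw^2)\bigr]=\tfrac{x}{2}\bigl((1-p)-3pw^2\bigr)$ shows this derivative vanishes \emph{identically} at $w=\sqrt{(1-p)/(3p)}$, for every $p>1/4$. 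Consequently the $(R-x)^{1/2}$ term in $f$ is killed, $(f-f(R))^2\sim(R-x)$ contributes only analytically, and the singular part of $D$ starts at order $(R-x)^{3/2}$, giving $\delta_n\sim cR^{-n}n^{-5/2}$, not $n^{-3/2}$. This is precisely the "notable fact'' the paper records; your statement that the derivative ``vanishes precisely at $p=1/4$'' is an artifact of evaluating at the wrong point. (Minor: $\partial_w^3 P(1,1)=24p$, not $6p$; this does not affect the argument since only nonvanishing at $p=1/4$ is used.)
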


Let us consider the process defined on the full line. 
Denote respectively, for all $t>0$, by $c_0(t)$ and $c_{+}(t)$ the density of static particles and of right-going particles that have not annihilated by time $t$, among the particles present at initial time, i.e.\ 
\[c_0(t)=\lim_{m,n\to+\infty} \frac1{m+n+1}\sum_{-m\le k\le n}\indic_{\{\stay_k\}\cap\{\bullet_k\text{ survives beyond time $t$}\}},\]
and similarly for $c_+(t)$ with $\{\go_k\}$ instead of $\{\stay_k\}$. 
These densities exist a.s.\ due to the ergodicity of the process under $\P_\R$, and satisfy
\[c_+(t)=\P_\R(\go_0,\ \text{$\bullet_0$ survives beyond time $t$})=\P_\R(\go_{0}\tto t)\]
and
\[c_0(t)=\P_\R(\stay_0,\ \text{$\bullet_0$ survives beyond time $t$})=p\P(D>t)^2,\]
where the final equality comes from the fact that the survival of a static particle at 0 beyond time $t$ is equivalent to the survival from particles starting in $(0,t]$ and in $[-t,0)$, together with symmetry and independence of both half-lines.

Note that, if $m$ is integrable with mean 1, then $c_0(t)$ and $c_+(t)$ also have the meaning of spatial densities: by the law of large numbers, 
\[c_0(t)=\lim_{\substack{a\to\infty\\b\to\infty}} \frac1{a+b}\#\{k\in\Z\st x_k\in[-a,b],\ \bullet_k\text{ is static and survives after time $t$}\},\]
and similarly for $c_+(t)$. 

The previous critical and subcritical polynomial asymptotics reinterpret into universal asymptotics for $c_0(t)$ and $c_+(t)$ as $t\to\infty$: recall the following result from the introduction.

{\renewcommand{\thetheorem}{\ref*{thm:asympt_density}}
\begin{theorem}[Asymptotics of the density of particles]Assume the law $m$ of distance between particles to be exponentially integrable (i.e.\ $\int e^{\eta x}dm(x)<\infty$ for some $\eta>0$) and have mean $1$. Then, for some $c=c(p)>0$, as $t\to\infty$, 
\[c_0(t)=\begin{cases}
\bigl(\frac{2p}{\pi(1-4p)}+o(1)\bigr)\,t^{-1}	& \text{if $p<1/4$,}\\
\bigl(\frac{2^{2/3}}{4\Gamma(2/3)^2}+o(1)\bigr)\,t^{-2/3}	& \text{if $p=1/4$,}\\
(2\sqrt p-1)^2-o(e^{-ct})	&\text{if $p>1/4$,}
\end{cases}\]
and
\[c_+(t)=\begin{cases}
\bigl(\frac1{\sqrt\pi}\sqrt{1-4p}+o(1)\bigr)\,t^{-1/2}	&\text{if $p<1/4$,}\\
\bigl(\frac{2^{2/3}}{8\Gamma(2/3)^2}+\frac{3 }{8\Gamma(1/3)}+o(1)\bigr)\,t^{-2/3}	&\text{if $p=1/4$,}\\
o(e^{-ct})	&\text{if $p>1/4$.}
\end{cases}\]
Furthermore, when $m=\delta_1$, if $p>1/4$, 
\begin{equation}
(2\sqrt p-1)^2-c_0(n)\equivalent{\substack{n\to\infty\\n\text{ even}}}2c_+(n) \equivalent{\substack{n\to\infty\\n\text{ even}}}\frac{9p}{2\sqrt p+1}\sqrt{\frac{8(1-p)(8p+1)}{\pi(4p-1)^5}}R^{-(n+1)}n^{-3/2}\tag{\ref*{eq:equiv_c0_discrete}}
\end{equation}
where $R=R(p)=\frac3{8p+1}\sqrt{\frac {3p}{1-p}}$.
\end{theorem}
\addtocounter{theorem}{-1}}

This result is a consequence of the previous asymptotics (Theorem~\ref{thm:asymptotics}) and of the following Lemmas~\ref{lem:discrete_continuous} and~\ref{lem:discrete_continuous_super}, which rely on standard large deviations estimates to control the approximation of the distance $D=x_A$ by the index $A$. 

In the statements below, we denote by $B$ the index (in $\Z$) of the particle that collides with the particle at $0$, and $B=\infty$ if no such particle exists. 

\newcommand{\floor}[1]{\left\lfloor #1 \right\rfloor}
\begin{lemma}\label{lem:discrete_continuous}
Assume $m$ is exponentially integrable and has unit mean. Assume $0<p\le 1/4$. Let $C,\alpha>0$. 
\begin{enumerate}[a)]
	\item 
If
\[\P(A=n)\equivalent{\substack{n\to\infty\\n\text{ odd}}} Cn^{-(1+\alpha)},\] 
then
\begin{equation}
\P(D>t)\equivalent{t\to\infty}\P(A>\floor{t})\equivalent{t\to\infty} \frac C{2\alpha} t^{-\alpha}\label{eq:DA}
\end{equation}
and
\begin{equation}
\P_\R(\go_{0}\tto\stay,\ x_B>t)\equivalent{t\to\infty} p\frac{C^2}{8\alpha^2} t^{-2\alpha}.\label{eq:equivGo}
\end{equation}
	\item
If 
\[\P_\R(\go_{0}\collide\come_n)\equivalent{\substack{n\to\infty\\n\text{ odd}}} Cn^{-(1+\alpha)},\]
then
\begin{equation}
\P_\R(\go_{0}\collide\come,\ x_B>t)\equivalent{t\to\infty} \frac C{2\alpha} t^{-\alpha}.\label{eq:equivB}
\end{equation}
\end{enumerate}
\end{lemma}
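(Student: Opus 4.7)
The plan is to reduce all three asymptotics to the hypothesized polynomial decays by combining two ingredients: a Lemma~\ref{flip}-style reversal identity in the full-line geometry, and Cram\'er-type large deviation bounds (whence the role of exponential integrability of $m$) that translate between the discrete index and the continuous position, since $x_n=\sum_{k=1}^n\xi_k$ is a sum of i.i.d.\ distances of mean $1$.

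I would first dispose of the two asymptotics whose only content is the passage ``index $\to$ position'', namely $\P(D>t)\sim\P(A>\lfloor t\rfloor)\sim\frac{C}{2\alpha}t^{-\alpha}$ in part~a) and \eqref{eq:equivB} in part~b). Chernoff's inequality applied to the i.i.d.\ distances yields $\P(x_n\le(1-\epsilon)n)+\P(x_n\ge(1+\epsilon)n)\le e^{-c(\epsilon)n}$ for each $\epsilon>0$. Sandwiching $\{D>t\}$ between $\{A>(1+\epsilon)t\}$ and $\{A>(1-\epsilon)t\}$ up to correction sums of the form $\sum_{n}\P(x_n\gtrless t)$, which are $O(e^{-c't})$, and recognizing that the tail $\P(A>N)$ is asymptotic to $\frac{C}{2\alpha}N^{-\alpha}$ (the factor $\tfrac12$ arising because only odd $n$ contribute), yields the asymptotic for $\P(D>t)$ upon letting first $t\to\infty$ then $\epsilon\to0$. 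Exactly the same sandwich, applied to $\P_\R(\go_{0}\collide\come,x_B>t)=\sum_{n\ge1}\P_\R(\go_{0}\collide\come_n,x_n>t)$, handles part~b) with no extra input.

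The bulk of the work is the identity
\[\P_\R(\go_{0}\to\stay,x_B>t)=\frac{p}{2}\P(D>t)^2,\]
from which \eqref{eq:equivGo} follows by squaring \eqref{eq:DA}. I would mimic the proof of Lemma~\ref{flip}: for each $k\ge1$, write $\{\go_{0}\to\stay_k\}=E_k^{(1)}\cap E_k^{(2)}$, where $E_k^{(1)}$ collects the conditions $v_0=+1$, $v_k=0$ and ``$\bullet_0\to\stay_k$ in restriction to $[0,x_k]$'', while $E_k^{(2)}$ encodes the condition on $(x_k,\infty)$ that no particle from this region interferes with $\bullet_k$ before $\bullet_0$ reaches it (with the usual triple-collision caveat involving $s_k$ when a right-side left-mover arrives exactly at time $x_k$). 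Applying the measure-preserving reversal of $[0,x_k]$ turns $E_k^{(1)}$ into $\{v_0=0,v_k=-1,\stay_0\from\come_k\}_{[0,x_k]}$, which, since the static $\bullet_0$ serves as a target at $0$, corresponds exactly to the half-line event $\{A=k\}$, yielding $\P_\R(E_k^{(1)},x_k\in dx)=p\,\P(A=k,x_k\in dx)$. Conditionally on $x_k=x$, $E_k^{(2)}$ has probability $\P(D>x)+\tfrac12\P(D=x)$ by translation invariance (the $\tfrac12$ accounting for the triple-collision contribution via $s_k$). Summing over $k\ge1$ and using $\sum_k\P(A=k,x_k\in dx)=\P(D\in dx)$ in the subcritical regime, a short telescoping-of-squares computation gives $\int_t^\infty\P(D\in dx)\bigl[\P(D>x)+\tfrac12\P(D=x)\bigr]=\tfrac12\P(D>t)^2$, from which the displayed identity, and then the asymptotic, follow.

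The main technical subtlety is to verify carefully that $E_k^{(1)}\cap E_k^{(2)}=\{\go_{0}\to\stay_k\}$ and that the reversal correctly identifies the reversed $E_k^{(1)}$ with $\{A=k\}$ on the half-line. Both go through cleanly because, in the restriction to $[0,x_k]$, the static particle $\bullet_k$ (resp.\ its reversed image) can only be approached from one side, so no triple collision occurs within the restriction and the spin enters only through $E_k^{(2)}$, where it combines with $\P(D=x)$ in precisely the way needed to produce the clean square $\tfrac12\P(D>t)^2$.
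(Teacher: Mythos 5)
Your proof is essentially correct, and for the two ``purely convertional'' asymptotics (the equivalence $\P(D>t)\sim\P(A>\lfloor t\rfloor)$ and part~b)) you follow the same Chernoff-sandwich route as the paper. Where you differ is in the treatment of \eqref{eq:equivGo}: the paper first establishes a \emph{discrete} asymptotic $\P_\R(\go_0\to\stay_n)\sim p\,\P(A=n)\P(A>n)$ (via the reversal identity $\P_\R(\go_0\to\stay_n)=p\P(A=n,\,D'>D)+\tfrac12 p\P(A=n,\,D'=D)$ and a second large-deviations sandwich), and then runs the index-to-position conversion once more. You instead sum that very reversal identity over $n$ directly, keeping the constraint $x_n>t$, and observe the \emph{exact} identity
\[
\P_\R(\go_0\to\stay,\ x_B>t)=p\,\P(D'>D,\ D>t)+\tfrac12 p\,\P(D'=D,\ D>t)=\tfrac{p}{2}\,\P(D>t)^2,
\]
where the last equality is the elementary symmetry bookkeeping $\P(D>t)^2=2\P(D>t,\,D'>D)+\P(D>t,\,D'=D)$ for i.i.d.\ $D,D'$; squaring the already-obtained equivalent for $\P(D>t)$ then yields $\tfrac{pC^2}{8\alpha^2}t^{-2\alpha}$ with no further estimation. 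This is a genuinely shorter route that buys a cleaner derivation (one large-deviations argument instead of two, and an exact rather than asymptotic intermediate formula), at no loss of generality since the identity is valid including for atomic $m$. The only cosmetic imprecision is your claim of ``correction sums of the form $\sum_n\P(x_n\gtrless t)$'': the paper's single inclusion $\{A<\lfloor(1-\delta)t\rfloor,\ D>t\}\subset\{x_{\lfloor(1-\delta)t\rfloor}>t\}$ (and its mirror) is slightly tighter, though your sum is also exponentially small, so this does not affect correctness.
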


\begin{lemma}\label{lem:discrete_continuous_super}
Assume $m$ is exponentially integrable and has unit mean. Assume $1/4<p<1$. Then we have the following (at least) exponential decays:
\begin{align*}
\P(t<D<\infty) & = o(e^{-\eta t}),\\
\P_\R(\go_{0}\tto\stay,\ x_B>t) & =o(e^{-\eta t}),\\
\P_\R(\go_{0}\collide\come,\ x_B>t) & =o(e^{-\eta t}),
\end{align*}
for some $\eta=\eta(p)>0$. 
\end{lemma}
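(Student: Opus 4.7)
The plan is to handle all three bounds in parallel by a two-step scheme: in each case, decompose by the index of the relevant particle and split the resulting sum at a threshold $n_0=\lfloor\varepsilon t\rfloor$ with $\varepsilon\in(0,1)$ small. The contribution from small indices is controlled by a Chernoff estimate on the renewal process (using exponential integrability of $m$), while that from large indices is controlled by geometric tails provided by Theorem~\ref{thm:asymptotics}.

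Concretely, since $m$ is exponentially integrable with unit mean, its moment generating function $M(\lambda)=\int e^{\lambda y}\,dm(y)$ is finite in a neighbourhood of $0$ and equals $1+\lambda+O(\lambda^2)$ there; choosing $\lambda>0$ and $\varepsilon>0$ small enough so that $\lambda>\varepsilon\log M(\lambda)$, Chernoff gives $\P(x_n>t)\le e^{-\lambda t}M(\lambda)^n\le e^{-c_1 t}$ uniformly in $n\le n_0$, with $c_1>0$. On the other hand, $R(p)=\frac{3}{8p+1}\sqrt{3p/(1-p)}$ equals $1$ at $p=1/4$ and is strictly greater than $1$ for every $p\in(1/4,1)$ (by a direct derivative computation from the explicit formula), so Theorem~\ref{thm:asymptotics} furnishes the geometric tails $\P(A>n)=O(R^{-n})$ and $\sum_{k>n}\P_\R(\go_0\collide\come_k)=O(R^{-n})$. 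Combining, each of the three probabilities decomposes as
\[\sum_n\P(\mathcal E_n,x_n>t)\;\le\;(n_0+1)\,e^{-c_1 t}+\sum_{n>n_0}\P(\mathcal E_n),\]
where $\mathcal E_n$ is $\{A=n\}$ for the first bullet, $\{\go_0\to\stay_n\}$ for the second, and $\{\go_0\collide\come_n\}$ for the third; both terms decay exponentially in $t$, giving the stated $o(e^{-\eta t})$ for any $\eta<\min(c_1,\varepsilon\log R(p))$.

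The main obstacle is the second bullet, since Theorem~\ref{thm:asymptotics} does not directly supply the tail of $\P_\R(\go_0\to\stay_n)$. The same analytic strategy applies without essential change, however: the generating series $\sum_n\P_\R(\go_0\to\stay_n)\,x^n$ can be expressed as a rational function of $f(x)$ by a recursion analogous to the derivation of $B(x)=\tfrac12 pxf(x)^2$ in the proof of Theorem~\ref{thm:universality}, so it has the same dominant singularity $R(p)$ as $f$, and singularity analysis then yields $\P_\R(\go_0\to\stay_n)=O(R(p)^{-n})$, enough to close the second bullet exactly as above.
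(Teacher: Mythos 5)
Your splitting scheme for the first and third bullets is correct and essentially the same as the paper's: the paper splits at the deterministic threshold $\floor{(1-\delta)t}$ and uses a large-deviation bound on $X_1+\cdots+X_{\floor{(1-\delta)t}}$ together with the geometric tail of $A$ (resp.\ of $\P_\R(\go_0\collide\come_n)$) supplied by Theorem~\ref{thm:asymptotics}; your split at $n_0=\floor{\varepsilon t}$ is the same idea with $\varepsilon=1-\delta$.

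However, your treatment of the second bullet contains a genuine error. The generating series $\sum_n\P_\R(\go_0\to\stay_n)\,x^n$ \emph{cannot} be a rational function of $f(x)$, because $f$ is $m$-independent (Theorem~\ref{thm:universality}) while $\P_\R(\go_0\to\stay_n)$ is not. Indeed, the paper's identity~\eqref{eq:123} reads $\P_\R(\go_0\to\stay_n)=p\,\P(A=n,\ D'>D)+\tfrac12 p\,\P(A=n,\ D'=D)$, and the event $\{D'>D\}$ compares two $m$-dependent distances; for instance with $m=\delta_1$ one gets $\P(A=n)\P(A>n)+\tfrac12\P(A=n)^2$, while for a continuous $m$ the second summand vanishes and the first involves the joint law of $(A,D)$, which depends on the shape of $m$. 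The analogy you draw to $B(x)=\tfrac12 p x f(x)^2$ fails because $B$ encodes $\beta_n=\P((A=n)\wedge(\go_1\to\stay))$, a half-line quantity constrained to $\{A=n\}$ whose $m$-independence is exactly the content of Theorem~\ref{thm:universality}, whereas $\P_\R(\go_0\to\stay_n)$ is a different, full-line quantity with no such universality.

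The fix is immediate and is what the paper does: identity~\eqref{eq:123} (valid also for $p>1/4$, with $D'$ possibly infinite) directly gives the pointwise bound $\P_\R(\go_0\to\stay_n)\le p\,\P(A=n)$, so the exponential tail $\P(A=n)=o(e^{-\eps n})$ transfers verbatim, and your splitting argument then closes the second bullet exactly as for the others. With that replacement your proof is sound.
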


Finally, the following simple lemma is easily established by standard methods and is therefore merely mentioned for reference:
\begin{lemma}
For any $\alpha>0$, 
\begin{equation}\label{eq:equiv_polynomial}
\sum_{\substack{k\ge n\\k\text{ odd}}}\frac1{k^{1+\alpha}}\equivalent{n\to\infty}\frac1{2\alpha}{n^{-\alpha}}.
\end{equation}
For any $\alpha\in\R$ and $R>1$, 
\begin{equation}\label{eq:equiv_exponential}
\sum_{\substack{k\ge n\\k\text{ odd}}}\frac1{k^\alpha R^k}\equivalent{\substack {n\to\infty\\\text{$n$ odd}}}\frac{R^2}{R^2-1}\frac1{n^\alpha R^n}.
\end{equation}
\end{lemma}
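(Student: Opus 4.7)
Both equivalences are standard analytic facts; I would treat each by isolating the dominant term and verifying the asymptotics by integral comparison (for the polynomial sum) and by dominated convergence on a geometric series (for the exponential sum).

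For \eqref{eq:equiv_polynomial}, I would first reduce to the sum over all $k \ge n$. Since $k^{-(1+\alpha)}/(k+1)^{-(1+\alpha)} \to 1$, pairing each odd index $k$ with its successor $k+1$ gives
\[\sum_{\substack{k\ge n\\k\text{ odd}}}\frac{1}{k^{1+\alpha}} = \frac{1}{2}\sum_{k\ge n}\frac{1}{k^{1+\alpha}}\,\bigl(1+o(1)\bigr)\]
as $n\to\infty$. The classical integral comparison
\[\int_n^\infty x^{-(1+\alpha)}\,dx \le \sum_{k\ge n}\frac{1}{k^{1+\alpha}} \le \int_{n-1}^\infty x^{-(1+\alpha)}\,dx\]
yields $\sum_{k\ge n} k^{-(1+\alpha)} \sim n^{-\alpha}/\alpha$, and halving produces the claimed equivalent $\frac{1}{2\alpha} n^{-\alpha}$.

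For \eqref{eq:equiv_exponential}, assuming $n$ odd, I would reindex by $k=n+2j$ to write
\[\sum_{\substack{k\ge n\\k\text{ odd}}}\frac{1}{k^\alpha R^k} = \frac{1}{n^\alpha R^n}\sum_{j\ge 0}\Bigl(\frac{n}{n+2j}\Bigr)^{\alpha} R^{-2j}.\]
As $n\to\infty$, each summand converges pointwise to $R^{-2j}$, so it suffices to justify interchanging the limit and the sum. If $\alpha \ge 0$ the summand is bounded by $R^{-2j}$, which is directly summable. If $\alpha < 0$, write $(n/(n+2j))^\alpha = (1+2j/n)^{|\alpha|} \le (1+2j)^{|\alpha|}$ for $n\ge 1$; this polynomial factor is dominated by the geometric decay $R^{-2j}$ since $R>1$, so $(1+2j)^{|\alpha|} R^{-2j}$ is summable. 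Dominated convergence then yields $\sum_j R^{-2j} = R^2/(R^2-1)$, and the claim follows.

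The only mild subtlety is the uniform domination when $\alpha<0$ in the second estimate, which is handled by the elementary bound above. Otherwise, both steps reduce to textbook comparisons, consistent with the paper's remark that the lemma is included purely for later reference.
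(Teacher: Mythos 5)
Your proof is correct and uses exactly the ``standard methods'' the paper alludes to when it declines to give a proof: integral comparison for the polynomial tail plus a parity-pairing observation, and a reindexed geometric series with dominated convergence for the exponential tail. The one small informality is in \eqref{eq:equiv_polynomial}: citing ``$k^{-(1+\alpha)}/(k+1)^{-(1+\alpha)}\to1$'' suggests a limit in $k$ alone, whereas what is actually needed is that the term-by-term ratio is uniformly within $[(1+1/n)^{-(1+\alpha)},1]$ over all $j\geq0$ in the pairing, which then tends to $1$ as $n\to\infty$; since $(1+1/k)^{1+\alpha}$ is decreasing in $k$, this is immediate, but the uniformity in the pairing index is the actual substance of the step. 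The reindexing and the dominated convergence argument for \eqref{eq:equiv_exponential}, including the split into $\alpha\ge0$ and $\alpha<0$ with the bound $(1+2j/n)^{|\alpha|}\le(1+2j)^{|\alpha|}$ for $n\ge1$, are clean and complete.
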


\begin{proof}[Proof of Lemma~\ref{lem:discrete_continuous}]
Let us first prove \eqref{eq:DA}. Note that the second asymptotic equivalent follows from the polynomial decay assumption by~\eqref{eq:equiv_polynomial} and by the fact that $A<\infty$ a.s.\ if $p\le1/4$ (Theorem~\ref{thm:main}). Therefore we only have to prove the first asymptotics. 

Remember that $D=x_A=X_1+\cdots+X_A$. 
Then, for any $\delta>0$, for all $t>0$, since $X_i\ge0$ $\forall i$,
\[\{A<\floor{(1-\delta)t}\}\cap\{D>t\}\subset\{X_1+\cdots+X_{\floor{(1-\delta)t}}>t\},\]
and
\[\{A>\floor{(1+\delta)t}\}\cap\{D<t\}\subset\{X_1+\cdots+X_{\floor{(1+\delta)t}}<t\},\]
and by classical large deviation principles the probabilities of both right hand sides decay (at least) exponentially fast as $t\to\infty$ under the assumption of exponential integrability of $m$. It follows that $\P(D>t)$ is bounded above by $\P(A>\floor{(1-\delta)t})$ and below by $\P(A>\floor{(1+\delta)t})$ up to exponentially small terms. As mentioned above, the assumption on $A$ and $p$ implies elementarily that $P(A>n)\sim C/(2\alpha)n^{-\alpha}$, hence  
\[(1+\delta)^{-\alpha}\le\liminf_{t\to\infty}\frac{\P(D>t)}{\frac C{2\alpha}t^{-\alpha}}\le\limsup_{t\to\infty}\frac{\P(D>t)}{\frac C{2\alpha}t^{-\alpha}}\le(1-\delta)^{-\alpha},\]
and~\eqref{eq:DA} follows by letting $\delta\to0$.

The proof of~\eqref{eq:equivB} from the assumption in b) is obtained in the same way as~\eqref{eq:DA} above. 

Let us turn to~\eqref{eq:equivGo}. It suffices for us to prove
\begin{equation}
\P_\R(\go_{0}\tto\stay_n)\equivalent{\substack{n\to\infty\\\text{$n$ odd}}} p\P(A=n)\P(A>n),\label{eq:234}
\end{equation}
because by the assumption this implies $\P_\R(\go_{0}\tto\stay_n)\sim pC^2/(2\alpha)n^{-(1+2\alpha)}$, and~\eqref{eq:equivGo} will then again follow by the same approximation from discrete to continuous as above.
First notice that, by the same arguments as in the proof of the identity~\eqref{r-relation} in Lemma~\ref{flip},
\begin{equation}
\P_\R(\go_{0}\tto\stay_n) =p\P(A=n,\ D'>D)+\frac12 p\P(A=n,\ D'=D),\label{eq:123}
\end{equation}
where $D'$ has same distribution as $D$ and is independent of $A$ and $D$. Then, as before, for any $\delta>0$, for some $\gamma(=\gamma(\delta))>0$, classical large deviations give $\P(A=n, D\le(1-\delta)n)=o(e^{-\gamma n})$, hence
\begin{align*}
\P(A=n,D'>D) 
	& = \P(A=n, D>(1-\delta)n, D'>D)+o(e^{-\gamma n})\\
	& \le\P(A=n, D'>(1-\delta)n)+o(e^{-\gamma n}) \\
	& = \P(A=n)\P(D>(1-\delta)n)+o(e^{-\gamma n})\\
	& = \P(A=n)\P(A>\floor{(1-\delta)n})+o(n^{-2\alpha-1})\\
	& = (1-\delta)^{-\alpha}\P(A=n)\P(A>n)+o(n^{-2\alpha-1}).
\end{align*}
We proceed similarly for the lower bound:
\begin{align*}
\P(A=n,D'>D) 
	& = \P(A=n, D<(1+\delta)n, D'>D)+o(e^{-\gamma n})\\
	& \ge \P(A=n, D<(1+\delta)n<D')+o(e^{-\gamma n})\\
	& =\P(A=n, D<(1+\delta)n)\P(D>(1+\delta)n)+o(e^{-\gamma n})\\
	& = \P(A=n)\P(D>(1+\delta)n)+o(e^{-\gamma n})\\
	& = (1+\delta)^{-\alpha}\P(A=n)\P(A>n)+o(n^{-2\alpha-1}),
\end{align*}
and conclude that $\P(A=n,\,D'>D)\sim\P(A=n)\P(A>n)$ by letting $\delta\to0$ as for~\eqref{eq:DA}. The same proof shows that the right-hand side is also equivalent to $\P(A=n,D'\ge D)$. Since, by~\eqref{eq:123},
\[p\P(A=n, D'>D)\le \P_\R(\go_0\tto\stay_n)\leq p\P(A=n, D'\ge D),\]
we conclude that \eqref{eq:234} holds, which entails~\eqref{eq:equivGo}.
\end{proof}

\begin{remark}\label{rmk:exponential_integrability}
As shown in the previous proof, the exponential integrability of $m$ ensures that $D$ and $A$ have matching asymptotics, which could fail otherwise. Assuming for instance that $\P(x_1>t)\sim C t^{-r}$ for some $r>1$, one has polynomial large deviations $\P(x_n>(1+\delta)n)\sim Cn^{-r+1}$ (cf.~\cite{nagaev81} or more recently~\cite[Theorem 9.1]{denisov08}) so that for instance
\begin{align*}
\P(D>t)
	& \ge\P(X_1+\cdots+X_{\floor{t/2}}>t,\ A<t/2)\\
	& \ge \P(X_1+\cdots+X_{\floor{t/2}}>t)-\P(A>t/2),
\end{align*}
and, provided $r$ is close enough to $1$, the leading order of the right hand side would be $t^{-r+1}$, making $\P(D>t)$ larger that in the universal case. 
\end{remark}

\begin{proof}[Proof of Lemma~\ref{lem:discrete_continuous_super}]
Choose any $\delta\in(0,1)$. Then, similarly as in the proof of the previous lemma, 
\begin{align*}
\P(t<D<\infty)
	& \le \P(\floor{(1-\delta)t}<A<\infty)+\P(X_1+\cdots+X_{\floor{(1-\delta)t}}>t)
\end{align*}
and both quantities on the right hand side decay (at least) exponentially fast as $t\to\infty$. The first assertion of the lemma follows. 

The third one is obtained in the same way. 

Finally, one still has~\eqref{eq:123} (with $D'$ now possibly taking the value $\infty$) hence in particular
\[\P_\R(\go_{0}\tto\stay_n)\le \P(A=n)=o(e^{-\eps n})\]
for some $\eps>0$. This exponential decay in the discrete index $n$ is then turned into an exponential decay of $\P_\R(\go_{0}\tto\stay,\,x_B>t)$ in the continuous variable $t$ in the same way as above. 
\end{proof}

\begin{proof}[Proof of Theorem~\ref{thm:asympt_density}]
Recall
\[c_0(t)=p\P(D>t)^2 = p(1-q+\P(t<D<\infty))^2,\]
hence the asymptotics for $c_0$ follow at once from Theorems~\ref{thm:main} and~\ref{thm:asymptotics}, Lemma~\ref{lem:discrete_continuous} (Relation~\eqref{eq:DA}) and Lemma~\ref{lem:discrete_continuous_super}. 

As for $c_+$, notice that, if the particle at $0$ is right-going, then it reaches location $t$ (or, equivalently, survives until time $t$) if and only if it meets no static particle launched from $(0,t]$ and no left-going particle launched from $(0,2t]$. Thus 
\[c_+(t)=\P_\R(\go_{0}\tto t) = \P_\R(\go_{0}\tto\stay,\ x_B>t)+ \P_\R(\go_{0}\collide\come,\ x_B>2t).\]
The asymptotics for $c_+$ now follow from Theorem~\ref{thm:asymptotics}, Lemma~\ref{lem:discrete_continuous} (Relations~\eqref{eq:equivGo} and~\eqref{eq:equivB}), and Lemma~\ref{lem:discrete_continuous_super}. Note that, for $p<1/4$, the first term (collision with a static particle) decays as $t^{-1}$ hence is negligible with respect to the second term decaying as $t^{-1/2}$; that, for $p=1/4$, both terms are of the same order ; and that, for $p>1/4$, both terms are exponentially small. 

Let us finally consider the case $m=\delta_1$, $p>1/4$. Since $D=A$ in this case, the asymptotics for $c_0(n)$ come from those of $A$. We indeed have as above, as $n\to\infty$,
\[c_0(n)=p (1-q+\P(n<A<\infty))^2 = p(1-q)^2+2p(1-q)\P(n<A<\infty)(1+o(1)),\]
hence~\eqref{eq:equiv_c0_discrete}, using~\eqref{eq:equiv_exponential} (and computing in particular $\frac{R^2}{R^2-1}=\frac{27p}{(4p-1)^3}$). 
For $c_+(n)$, first remember
\[c_+(n)=\P_\R(\go_0\tto n)=\P_\R(\go_0\tto\stay, B>n)+\P_\R(\go_0\collide\come, B>2n).\] 
The second term is obtained by summations of $\P(\go_0\collide\come_k)$, $k>2n$, hence by Theorem~\ref{thm:asymptotics} and~\eqref{eq:equiv_exponential} is of order $C(p)R^{-2n}n^{-5/2}$ for some constant $C(p)$. For the first term, let us first, as above, write
\begin{align*}
\P_\R(\go_0\tto\stay_k) &=  p\P(A=k, A'>k)+\frac12 p\P(A=k, A'=k) \\
	& = p\P(A=k)\P(A>k)+\frac12 p\P(A=k)^2.
\end{align*}
The second summand is of smaller order as $k\to\infty$, while the first is equivalent to $p\P(A=k)\P(A=\infty)=p(1-q)\P(A=k)$. Thus, by summation over $k>n$, and comparison with the above formula for $c_0(n)$, as $n\to\infty$, 
\[\P_\R(\go_0\tto\stay, B>n)\sim p(1-q)\P(n<A<\infty)\sim \frac12\bigl(p(1-q)^2-c_0(n)\bigr) \sim C' R^{-n}n^{-3/2},\]
for some $C'>0$. In particular this term dominates $\P_\R(\go_0\collide\come, B>2n)$ (which is asymptotically equivalent to $C(p)R^{-2n}n^{-5/2})$, hence the result. 
\end{proof}

Let us finally prove the main result of this section. 

\begin{proof}[Proof of Theorem~\ref{thm:asymptotics}]
\textit{Expectation of $A$.} We know \latin{a priori} that $\E[A\indic_{\{A<\infty\}}]<\infty$ when $p>1/4$. Indeed, the set of indices of surviving static particles on the full-line is ergodic, hence the first positive index $S$ of a surviving static particle is integrable; and we have $S>A$ on $\{A<\infty\}$, hence $\E[A\indic_{\{A<\infty\}}]\le\E_\R[S\indic_{\{A<\infty\}}]\le\E_\R[S]$ is finite. 
Alternatively, this would follow from the fact, proved below, that $f$ is analytic hence differentiable in a neighborhood of $1$ when $p>1/4$. 

Since $\E[A\indic_{\{A<\infty\}}]=f'(1^{-})$, the first formulas follow by differentiating Equation~\eqref{eq:generating_series} with respect to $x$, and substituting $x=1$ (hence $f(x)=f(1)=q$), yielding, if $p>1/4$, and using the above fact that $\E[A\indic_{\{A<\infty\}}]<\infty$,
\[\E[A\indic_{\{A<\infty\}}]=\frac q{(1-q)(1-2pq)}.\]
The expression for $q$ obtained in Theorem~\ref{thm:main} leads to the stated formula. 

When $p\le1/4$, if we assume $\E[A]<\infty$ then this same procedure (now with $q=1$) produces a contradiction, hence $\E[A]=\infty$.

\vspace{\baselineskip}\noindent\textit{Analytic continuation.} We shall prove that the function $f$ is amenable to the methods of singularity analysis (cf.~\cite[Chapter VI]{flajolet-sedgewick}), that enables one to deduce asymptotics of the coefficients $p_n=\P(A=n)$, $n\in\N$, of $f$ from asymptotics of the analytic function $f$ in the neighbourhood of its singularities of least modulus. To that end, let us first discuss the analytic continuation of $f$. 

As the generating series of a (sub-)probability distribution, $f$ is convergent in the unit disk, hence is analytic in this disk, and by Theorem~\ref{thm:universality} satisfies the identity~\eqref{eq:generating_series} in this disk, i.e., for all $z\in\C$ such that $\lvert z\rvert\le1$, \[F(z,f(z))=0,\] 
where the polynomial $F$ in two variables is defined by
\begin{equation}\label{eq:F}
F:(z,w)\mapsto F(z,w)=pzw^4-(1+2p)zw^2+2w-(1-p)z.
\end{equation}

For a given $z\in\C$, the equation $F(z,w)=0$ is solved by up to 4 complex numbers $w$, one of which, when $\lvert z\rvert\le 1$, is $f(z)$. Furthermore, wherever the local inversion theorem applies, i.e.\ for all $(z_0,w_0)\in\C^2$ such that $F(z_0,w_0)=0$ and $\partial_w F(z_0,w_0)\ne 0$, there is a unique holomorphic function $\phi$ defined on a neighbourhood $U_0$ of $z_0$ and such that $\phi(z_0)=w_0$ and $F(z,\phi(z))=0$ for all $z\in U_0$. 

The finitely many couples $(z_0,w_0)$ such that $F(z_0,w_0)=0$ and conversely $\partial_w F(z_0,w_0)=0$ are known as \emph{singularities} of $F$. By general arguments (see for instance \cite[Chapter 2, Section 1, Theorem 2]{siegel}), $f$ can be analytically continued along each path that avoids these singularities hence, by the monodromy theorem, can be analytically continued to any simply connected domain of the complex plane that does not contain these singularities (typically, a slit plane). One needs however a finer analysis in order to identify the radius of convergence of $f$ and the singularities on the boundary of its disk of convergence, as some of the singularities of $F$ are singularities of other analytic solutions $\phi$ to $F(z,\phi(z))=0$. (Actually, by irreducibility of $F$, all such solutions can be obtained by analytic continuations of $f$, see~\cite[Chapter 2, Section 1, Theorem 3]{siegel}, a property best stated within the framework of Riemann surfaces.) Regarding this problem, we refer to Figure~\ref{fig:riemann} for an illustration, and to Reference~\cite{canfield84} for an instructive discussion. 

\begin{figure}\label{fig:riemann}
\begin{center}
\includegraphics[width=10cm]{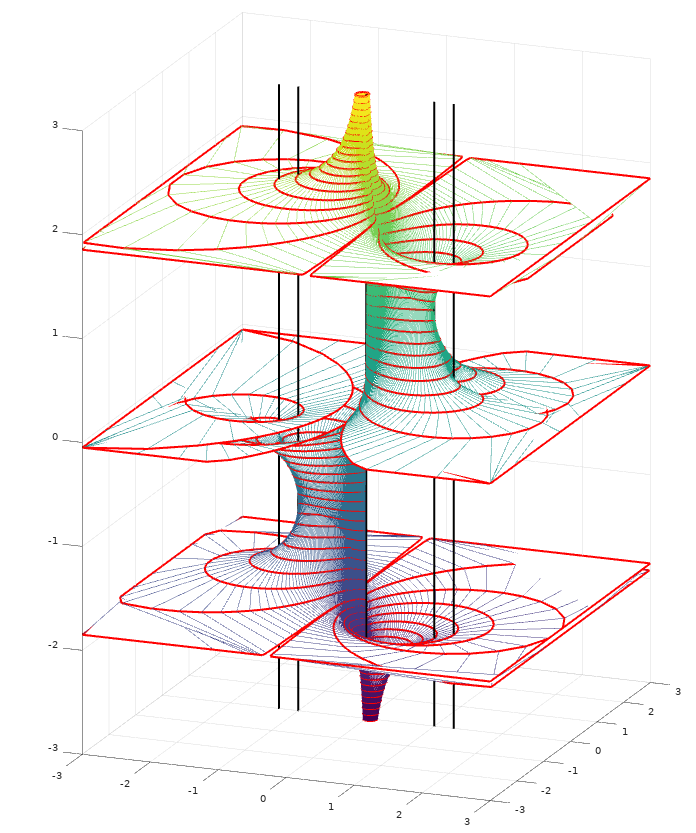}
\end{center}
\caption{Plot of the real part of the solutions $w\in\mathbb C$ to $F(z,w)=0$ (see~\eqref{eq:F}), as a function of $z\in\mathbb C$ (horizontal plane), for $p=0.75$. Vertical lines correspond to the values $z=\pm1, \pm R$ (here, $R>1$). On this representation, a slice corresponding to real parts in $[-0.05, +0.05]$ was removed, so as to distinctly show the two sheets in the middle part, the one closer to the plane $\Re(w)=0$ being the real part of $f$. Note in particular that $1$ is a singularity of $F$, although not of $f$. A few level curves (truncated on the boundary) are shown in red.} 
\end{figure}

\vspace{\baselineskip}\noindent\textit{Singularities of $F$.} The equation $\partial_w F(z,w)=0$ implies $w\ne0$, $w\ne\sqrt{\frac{1+2p}{2p}}$, and $z=\frac1w\frac1{(1+2p)-2pw^2}$, while the additional constraint $F(z,w)=0$ yields (keeping only the numerator)
\[0=(w^2-1)\bigl(3pw^2-(1-p)\bigr),\]
hence $w=\pm1$ or $w=\pm\sqrt{\frac{1-p}{3p}}$, and respectively $z=\pm1$ or $z=\pm R$ where 
\[R=\frac3{8p+1}\sqrt{\frac{3p}{1-p}}.\] 
The points $(\pm 1,\pm1)$ and $(\pm R, \pm\sqrt{\frac{1-p}{3p}})$ are thus singularities of $F$ --- or, in other words, of the analytic multivalued function obtained by considering all analytic continuations of $f$. 

Let us note that it is more natural to consider more generally $z,w$ in the Riemann sphere $\widehat\C=\C\cup\{\infty\}$, in which case $(0,\infty)$ can be seen to be another singularity of $F$ (indeed, $(0,0)$ is a singularity of $G(z,v)=v^4 F(z,1/v)$). 

\vspace{\baselineskip}\noindent\textit{Identification of the singularities of $f$.} Since $f(0)=0\ne\infty$ (or since obviously $f$ is analytic in a neighborhood of $0$), $0$ is not a singularity of $f$. We first consider the candidate singularities at $(z,w)=\pm(1,1)$. By definition of $f$, we have $f(\pm1)=\pm q$. Thus, by Theorem~\ref{thm:main}, 
\begin{itemize}
	\item if $p\le1/4$, then $q=1$ hence indeed $\pm1$ are singularities of $f$;
	\item if $p>1/4$, then $q<1$ hence $\pm1$ are not singularities of $f$.
\end{itemize}
The other candidate singularities are at $(z,w)=\pm\Bigl(\sqrt{\frac{1-p}{3p}},R\Bigr)$. We have:
\begin{itemize}
	\item if $p<1/4$, then $R\in(0,1)$ hence $f$ is analytic at $R$, i.e.\ $\pm R$ are not singularities for $f$;
	\item if $p=1/4$, then $R=1$ so $\pm R=\pm 1$ are the same singularities as before;
	\item if $p>\frac14$, then $R>1$. Since $\pm1$ are not singularities, $\pm R$ \emph{must} be singularities of $f$. Indeed, the coefficients $p_n$ decay at most exponentially (since, for instance, $p_n\ge((1-p)/2)^n$ for odd $n$ by considering a configuration of alternating $+1$ and $-1$ particles), which implies that $f$ has a finite radius of convergence (e.g., smaller than $2/(1-p)$).  
\end{itemize}

\vspace{\baselineskip}\noindent\textit{Singularity analysis.} We are thus in position to apply singularity analysis (cf.~\cite[Chapter VI]{flajolet-sedgewick}, and more specifically Corollary VI.1 \emph{Sim-transfer} and Theorem VI.5 \emph{Multiple singularities}) in order to obtain asymptotics for $p_n$. 

Assume first $0<p<1/4$. By the previous discussion, the radius of convergence of $f$ is $1$, with symmetric singularities at $\pm1$. We compute, from the equation $F(z,f(z))=0$, and recalling $f(1)=1$, 
\begin{equation}f(z)-1\equivalent{z\to1}-\frac{\sqrt{2}}{\sqrt{1-4p}}(1-z)^{1/2},\label{eq:fz-near-1}\end{equation}
and a symmetric property holds at $-1$ since $f$ is odd (indeed $p_{2n}=0$ for all $n$), hence
\[p_n\equivalent{n\to\infty}-\frac{\sqrt 2}{\sqrt{1-4p}}\frac{n^{-3/2}}{\Gamma(-1/2)}(1-(-1)^n)=\frac{\sqrt 2}{\sqrt{\pi(1-4p)}}n^{-3/2}\indic_{\text{($n$ odd)}}\]

Assume now $p=1/4$. Again the singularities are at $\pm1$. We find similarly
\[f(z)-1\equivalent{z\to1}-2^{1/3}(1-z)^{1/3}.\]
It follows by the same symmetry that
\[p_n\equivalent{n\to\infty}-2^{1/3}\frac{n^{-4/3}}{\Gamma(-1/3)}(1-(-1)^n)=\frac{2^{4/3}}{3\Gamma(2/3)}n^{-4/3}\indic_{\text{($n$ odd)}}.\]

Finally, assume $1/4<p<1$. Then the singularities are at $\pm R$ and $R>1$ so we can conclude that $p_n$ decays exponentially fast as $n\to\infty$, in other words $A$ is exponentially integrable on $\{A<\infty\}$, and more precisely we compute
\[f(z)-f(R)\equivalent{z\to R} -\frac{\sqrt{2}}{3^{7/4}} \frac{8p+1}{\sqrt{4p-1}}\Big(\frac1p-1\Big)^{3/4}(R-z)^{1/2}\]
hence
\[p_n\equivalent{n\to\infty}C_p R^{-n}n^{-3/2}\indic_{\text{($n$ odd)}},\]
where $C_p=\frac1{\sqrt\pi}\frac{\sqrt{2}}{3^{7/4}} \frac{8p+1}{\sqrt{4p-1}}(\frac1p-1)^{3/4}R^{1/2} = \frac{\sqrt 2}{3\sqrt\pi}\Bigl(\frac{(8p+1)(1-p)}{p(4p-1)}\Bigr)^{1/2}$. 

\vspace{\baselineskip}\noindent\textit{Asymptotics for $\delta_n$.} Let us finally find the asymptotics for $\delta_n=\P(\go_{1}\collide\come_n)$ in each regime, which will at once give asymptotics for $\P_\R(\go_{0}\collide\come_n)=\delta_{n+1}$. 

Recall that we have
$A(x)=pxf(x)^2$, $B(x)=\frac12A(x)$, $C(x)=D(x)f(x)$,  
and $D(x)=\frac{1-p}2xf(x)-B(x)f(x)=\frac{1-p}2xf(x)-\frac12pxf(x)^3$, 
hence $A,B,C,D$ have (at most) the same singularities as $f$. 

Assume $p<1/4$. Using \eqref{eq:fz-near-1} we can compute, as $\eps\to0$,
\[D(1-\eps)=\frac12-p-\frac{\sqrt{1-4p}}{\sqrt2}\eps^{1/2}+o(\eps^{1/2})\]
and we deduce 
\[\delta_n\equivalent{\substack{n\to\infty\\n\text{ even}}} \frac{\sqrt{1-4p}}{\sqrt{2\pi}}n^{-3/2}.\]

Assume $p=1/4$. Then, since $1-f(1-\eps)\sim 2^{1/3}\eps^{1/3}$, 
\[\frac14-D(1-\eps)\equivalent{\eps\to0}\frac382^{2/3}\eps^{2/3}\]
hence
\[\delta_n\equivalent{\substack{n\to\infty\\n\text{ even}}}2\times\frac38\frac{2^{2/3}}{\Gamma(-2/3)}n^{-5/3}=\frac{2^{2/3}}{2\Gamma(1/3)}n^{-5/3}.\]

Finally, assume $p>1/4$. 
From finer asymptotics of $f$ we can still find that $D$ has a development of the form
\[D(R)-D(R-\eps)=a\eps+b\eps^{3/2}+o(\eps^{3/2})\]
for some non-zero $a,b\in\R$ (a notable fact is that the \latin{a priori} leading order in $\eps^{1/2}$ vanishes) so that 
\[\delta_n \equivalent{\substack{n\to\infty\\n\text{ even}}} \frac{3bR^{3/2}}{2\sqrt\pi}R^{-n}n^{-5/2}.\]
Note that the proof of Theorem~\ref{thm:asympt_density} does actually not use such precise asymptotics of $\delta_n$ for $p>1/4$, but simply its exponential rate $R^{-n}$, which in the general case shows exponential decay, and in the case $m=\delta_1$ shows that the term involving $\delta_n$ is negligible. 
\end{proof}

\section{Consequences on the discretized model}\label{sec:discrete}

In this section we turn to the discretized model introduced in \cite{junge2}, and the same model extended to other situations where the measure $m$ of interdistances has atoms. Recall that the difference from our model lies in the way that triple collisions are resolved: in \cite{junge2} they result in the destruction of all three particles. If $m$ is atomless, triple collisions almost surely do not arise, and so the two models are equivalent. In this section we shall assume that $m$ has at least one atom, so the two models are not the same. We use $\Ph$ and $\Ph_{\R}$ for the half-line and full-line versions of this model, and write
\begin{gather*}
\qh\defeq\Ph(0\from\bullet),\\
\sh\defeq\Ph{\bigl((0\nfrom\bullet)\wedge(\go_{1}\tto\stay)\bigr)},\\
\rh\defeq\Ph{\bigl((0\from\bullet)\wedge(\go_{1}\tto\stay)\bigr)}
\end{gather*}
for the analogues of the quantities considered in Section~\ref{sec:algebraic}. In contrast to our other results, the behavior of this model is not universal, and we shall therefore concentrate on the specific case studied in \cite{junge2}, where $m=\delta_1$, i.e.\ particles start at every integer point; we use $\Ph_{\N}$ and $\Ph_{\Z}$ for this case, and define $\psi(p)$ analogously with $\theta(p)$, i.e.\
\[\psi(p):=\Ph_\Z(\stay_0\text{ survives}\mid\stay_0)=(1-\qh)^2.\]
Although the present methods fail to identify the exact threshold, they still suffice to prove the existence of a subcritical phase and furthermore to improve the known upper bounds on the threshold.

The methods of Section~\ref{sec:algebraic} with minor modifications yield the following algebraic identities.
\begin{lemma}\label{lem:discrete-identity}\begin{enumerate}[a)]
\item$\rh=p\qh(1-\qh)$.
\item$\sh=\frac12p\qh^2-\frac12\Ph_\R(D=D'<\infty)$, where $D,D'$ are independent random variables distributed as $D\defeq\min\{x_k:0\from\come_k\}$.
\item$\qh=\frac{1-p}2+p\qh^2+\sh+\Bigl(\frac{1-p}2-\rh-\sh\Bigr)\qh$.
\end{enumerate}
\end{lemma}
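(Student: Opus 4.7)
The plan is to adapt, with care for the altered triple-collision resolution, the combinatorial arguments of Lemmas~\ref{flip} and~\ref{lem:identity}. Since $m=\delta_1$ makes interdistances deterministic, the reversal map $\rev_k$ remains measure-preserving in the discretized setting (it simply swaps velocities and spins on $[x_1,x_k]$ symmetrically). The only algebraic difference between the two models enters through the outcomes of triple collisions: in the main model one of the two moving particles survives according to the static particle's spin, whereas in the present model all three are annihilated.

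For (a), I would repeat the proof of Lemma~\ref{flip}(\ref{s-relation}) almost verbatim, starting from
\[\{0\nfrom\bullet\}\cap\{\go_1\to\stay_k\}=\{\go_1\to\stay_k\}_{[x_1,x_k]}\cap\{x_k\nfrom\bullet\}_{(x_k,\infty)},\]
applying $\rev_k$ to the left-hand factor, and summing over $k\ge 2$. This argument is insensitive to the change of model because the restriction $\{x_k\nfrom\bullet\}_{(x_k,\infty)}$ forbids any particle from $(x_k,\infty)$ from ever reaching $x_k$, in particular ruling out any triple collision at that site.

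For (b), I would follow Lemma~\ref{flip}(\ref{r-relation}) and use $\rev_k$ to express $\Ph((0\from\bullet_n)\wedge(\go_1\to\stay_k))$ in terms of two independent left-reaching times $D,D'$. The computation naturally splits into a strict part ($D<D'$) and a tie ($D=D'$). In the original model the tie corresponds to a genuine triple collision and, after favourable spin resolution, contributes $\frac{1}{2}p\,\P(D=D'<\infty)$, which combines with the strict part to give the neat identity $s=\frac{1}{2}pq^2$. In the discretized model the tie annihilates all three particles, so $\bullet_n$ cannot reach $0$ and only the strict contribution survives; together with the symmetry identity $\Ph(D<D'<\infty)=\frac{1}{2}(\qh^2-\Ph(D=D'<\infty))$, this yields the stated expression for $\sh$.

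For (c), the proof of Lemma~\ref{lem:identity} carries over by conditioning on $v_1$. The cases $v_1=-1$ and $v_1=0$ yield the contributions $\frac{1-p}{2}$ and $p\qh^2$ exactly as before; in particular $\Ph(0\from\bullet\mid\stay_1)=\qh^2$ still holds, since $\stay_1$ is the leftmost particle and so cannot take part in any triple collision (no right-mover can ever reach $x_1$). For $v_1=+1$, splitting the event $\{(0\from\bullet)\wedge(\go_1)\}$ according to how $\bullet_1$ is eventually annihilated --- by a static particle (contribution $\sh$), by a left-mover, or in a triple --- one observes that in each of the last two cases the configuration past the annihilation event is independent and reaches $0$ with conditional probability $\qh$. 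Gathering these contributions and using $\Ph(\go_1\collide\come)+\Ph(\go_1\text{ in a triple})=\frac{1-p}{2}-\rh-\sh$ produces the identity. The main obstacle will be the bookkeeping in (b): one must verify carefully that triple outcomes are genuinely excluded from the event $\{\go_1\to\stay_k\}$ in the discretized model and that the resulting coefficient of $\Ph(D=D'<\infty)$ matches the stated formula.
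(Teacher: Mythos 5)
Your proposal is correct and follows essentially the same route as the paper: part~(a) is the argument of Lemma~\ref{flip}(\ref{s-relation}) unchanged, part~(b) drops the triple-collision contribution from the decomposition~\eqref{half-half} to obtain $\sh=p\,\Ph\bigl((D<\infty)\wedge(D'<\infty)\wedge(D<D')\bigr)$, and part~(c) inserts the extra term $\Ph(\go_1\triple)$ into the decomposition of $\tfrac{1-p}{2}$ and observes that the conditional probability of $\{0\from\bullet\}$ given a triple collision involving $\go_1$ is still~$\qh$, exactly as the paper does. One small caveat: the symmetry identity you invoke actually yields $\sh=\tfrac12 p\qh^2-\tfrac12\,p\,\Ph(D=D'<\infty)$, which is the form that feeds consistently into the cubic of Proposition~\ref{prop:dichotomy_Z}, so the printed statement of part~(b) appears to have dropped a factor of~$p$ in front of the second term.
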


\begin{proof}a) proceeds exactly as in Lemma~\ref{flip}. For b), events involving triple collisions at $x_k$ no longer count, and so \eqref{half-half} becomes
\[\Ph\bigl((0\from\bullet)\wedge(\go_{1}\tto\stay)\bigr) = p\Ph\bigl((D<\infty)\wedge(D'<\infty)\wedge(D<D')\bigr),\]
leading to the required result. For c), the proof proceeds as in Lemma~\ref{lem:identity}; \eqref{hits} becomes
\[\frac{1-p}{2}=\Ph(\go_{1}\tto\stay)+\Ph(\go_{1}\collide\come)+\Ph(\go_1\triple),\]
and since 
\[\Ph(0\from\bullet\mid\go_1\collide\come)=\Ph(0\from\bullet\mid\go_1\triple)=\qh,\]
\eqref{use-r} is unchanged except for replacing $q$ by $\qh$, etc.\end{proof}

\begin{proposition}\label{prop:dichotomy_Z}
For all $p\in(0,1)$, either $\qh=1$ or $\qh=-1+\sqrt{\frac1p-\sigma}$,
where
\[\sigma=\Ph(D=D'<\infty).\]
\end{proposition}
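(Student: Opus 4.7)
The plan is to parallel the proof of Proposition~\ref{prop:dichotomy}: substitute the expressions for $\rh$ and $\sh$ given by parts (a) and (b) of Lemma~\ref{lem:discrete-identity} into the identity in part (c), so as to eliminate $\rh$ and $\sh$ and obtain a single polynomial equation in $\qh$ with coefficients depending on $p$ and $\sigma$ only. Collecting powers of $\qh$ and multiplying through by $2$, I expect to arrive at a cubic of the form
\[
p\qh^3+p\qh^2-(1+p-p\sigma)\qh+(1-p-p\sigma)=0.
\]

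The next step is to verify that $\qh=1$ is always a root: direct substitution gives $p+p-(1+p-p\sigma)+(1-p-p\sigma)=0$, reflecting the fact that the subcritical possibility $\qh=1$ is a priori compatible with the identities. Polynomial division by $(1-\qh)$ then reduces the problem to the quadratic
\[
p\qh^2+2p\qh+(p-1+p\sigma)=0,
\]
which I would rewrite by completing the square as $p(\qh+1)^2=1-p\sigma$, or equivalently $(\qh+1)^2=\frac{1}{p}-\sigma$. Since $\qh\in[0,1]$, the only admissible solution is $\qh=-1+\sqrt{\frac{1}{p}-\sigma}$, yielding the dichotomy announced in the statement.

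The only nontrivial step is the algebraic bookkeeping during the substitution; the key point is that the $\sigma$-dependent terms combine exactly into the clean form $p(\qh+1)^2=1-p\sigma$, which is what produces the concise expression with $\frac{1}{p}-\sigma$ inside the radical (rather than $(1-\sigma)/p$ or another variant). I do not anticipate any conceptual obstacle here. Note that, in contrast with the original Proposition~\ref{prop:dichotomy}, the present statement is purely algebraic: it asserts only that $\qh$ must take one of two values, without selecting which, so no a priori regularity or topological argument (analogous to Section~\ref{sec:topological}) is required. Determining \emph{which} of the two roots is actually realized, and in particular whether a genuine phase transition exists for this discretized model, is a separate question and --- as the authors indicate --- is not resolved by these methods, requiring instead further numerical input.
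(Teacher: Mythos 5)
Your proof is correct and mirrors the paper's exactly: substitute the expressions from Lemma~\ref{lem:discrete-identity} into part (c), factor out the root $\qh=1$, and solve the resulting quadratic by completing the square. Note that the cubic you obtain --- and the final answer $-1+\sqrt{\tfrac1p-\sigma}$ rather than $-1+\sqrt{(1-\sigma)/p}$, a distinction you explicitly flagged --- requires the corrected form $\sh=\frac12 p\qh^2-\frac12 p\,\Ph_\R(D=D'<\infty)$, with a factor $p$ in front of $\Ph_\R(D=D'<\infty)$ that is what the lemma's proof actually yields but is missing from its printed statement.
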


\begin{proof}
From Lemma~~\ref{lem:discrete-identity}, it immediately follows that
\begin{align*}
0
	& =p\qh^3+p\qh^2-(1+p(1-\sigma))\qh+(1-p(1+\sigma))\\
	& =-(1-\qh)(p\qh^2+2p\qh+(\sigma+1)p-1),
\end{align*}
hence the assumption $\qh<1$ and the condition $\qh\ge0$ imply that $\qh$ is the positive root of the above second polynomial factor, i.e.\ $\qh=-1+\sqrt{\frac1p-\sigma}$. 
\end{proof}

Clearly $\sigma$ depends on the precise distribution $m$. For the remainder of this section, we consider specifically the case of $m=\delta_1$, which was the only discrete case considered in \cite{junge2}. In this case we have
\[\sigma=\sum_{k\geq 1}\Ph_{\N}(A=k)^2.\]

Let us introduce computable bounds on $\sigma$, so as to deduce effective criteria for survival or extinction of static particles. Define, for all $K\in\N$, 
\[\sigma_K=\sum_{k=1}^K\Ph_{\N}(A=k)^2\qquad\text{and}\qquad \sigmat_K=\sum_{k=1}^K\Ph_{\N}(A=k),\]
which are polynomials in $p$ of degree at most $2K$ and $K$ respectively, and notice that
\[\sigma_K\le\sigma\le \sigma_K+\biggl(\sum_{k=K+1}^\infty\Ph_{\N}(A=k)\biggr)^2=\sigma_K+(\qh-\sigmat_K)^2.\]
Define also
\[\pc[\hat]-\defeq\inf\{p\in [0,1]:\psi(p)>0\},\qquad\text{and}\qquad
\pc[\hat]+\defeq\sup\{p\in [0,1]:\psi(p)=0\}.\]

\begin{theorem}
For any integer $K\ge1$, 
\begin{enumerate}[a)]
	\item\label{survival_Z} If $p(4+\sigma_K)>1$ for all $p>p_0$ for some $p_0\in[0,1]$, then $\psi(p)>0$ for all $p>p_0$;
	\item\label{extinction_Z} If $p(5+\sigma_K-2\sigmat_K+\sigmat_K^2)\le1$, then $\psi(p)=0$.
\end{enumerate}
Therefore, writing $r_K^-$ for the smallest positive root of $p(5+\sigma_K-2\sigmat_K+\sigmat_K^2)=1$ and $r_K^+$ for the largest positive root of $p(4+\sigma_K)=1$, we have $r_K^-\le\pc[\hat]-\le\pc[\hat]+\le r_K^+$.
\end{theorem}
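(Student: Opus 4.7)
I propose to prove a) via the topological strategy of Section~\ref{sec:topological} adapted to the discretized model, and b) via a direct algebraic argument combining Proposition~\ref{prop:dichotomy_Z} with an elementary tail bound on $\sigma$. Starting with b), I would argue by contradiction: assume $\psi(p)>0$, so $\qh<1$ and Proposition~\ref{prop:dichotomy_Z} gives $\sigma = 1/p-(\qh+1)^2$. The basic inequality
\[\sigma-\sigma_K = \sum_{k>K}\Ph_\N(A=k)^2 \le \Bigl(\sum_{k>K}\Ph_\N(A=k)\Bigr)^2 = (\qh-\sigmat_K)^2\]
then yields $1/p \le (\qh+1)^2+(\qh-\sigmat_K)^2+\sigma_K$. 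Since $1-\sigmat_K \ge 0$, the right-hand side is strictly increasing in $\qh\in[0,1]$ and attains its supremum $5-2\sigmat_K+\sigmat_K^2+\sigma_K$ at $\qh=1$, so the strict inequality $\qh<1$ forces $p(5+\sigma_K-2\sigmat_K+\sigmat_K^2)>1$, contradicting the hypothesis of b).

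For a), I would first establish the discretized analogue of Proposition~\ref{prop:characterization}: $\psi(p)>0$ iff $\hat{\mathbb E}[\hat N_k]>0$ for some $k\ge1$, where $\hat N_k$ is defined as $N_k$ in Section~\ref{sec:characterization}. The definitions of $N_k$ and $N(i,j)$ transfer verbatim; the direct implication uses ergodicity together with the fact that the count of surviving left-movers on $(0,\infty)$ remains geometric with parameter $1-\qh$; and the reverse implication reduces to adapting Lemma~\ref{lem:superadditivity} to the triple-annihilation rule. This gives $\{p:\psi(p)>0\}=\bigcup_k\{p:\hat{\mathbb E}[\hat N_k]>0\}$, open since each $\hat{\mathbb E}[\hat N_k]$ is polynomial in $p$. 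To complete the analogue of Proposition~\ref{prop:connectivity}, I would show that the subcritical set is also relatively open on $U:=\{p:p(4+\sigma_K(p))>1\}$. On $U$, the bound $\sigma\ge\sigma_K$ together with the dichotomy ensures the uniform gap $-1+\sqrt{1/p-\sigma}\le -1+\sqrt{1/p-\sigma_K(p)}<1$. Writing $\qh=\limup_j\qh_j$ for a suitable polynomial approximation (for instance $\qh_j=\Ph((0\from\bullet)_{[x_1,x_j]})$, or, should monotonicity in $j$ turn out delicate under triple annihilation, the probability that $0$ is hit before time $j$ in the unrestricted process), both inclusions in
\[\{p\in U:\psi(p)=0\}=\bigcup_j\Bigl\{p\in U:\qh_j(p)>-1+\sqrt{1/p-\sigma_K(p)}\Bigr\}\]
follow from the dichotomy and the uniform gap, so the left-hand side is open. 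Given the hypothesis $(p_0,1]\subset U$ and the trivial observation $\psi(1)=1>0$, connectivity of $(p_0,1]$ then forces $\psi>0$ throughout.

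The bounds on $\pc[\hat]^-$ and $\pc[\hat]^+$ are now immediate from a) and b): on $(r_K^+,1]$ the polynomial $p(4+\sigma_K(p))-1$ is positive by continuity and its value at $p=1$, so a) gives $\psi>0$ there and $\pc[\hat]^+\le r_K^+$; symmetrically $\pc[\hat]^-\ge r_K^-$ from b). The main obstacle I foresee is the adaptation of Lemma~\ref{lem:superadditivity}: the original proof traces the cascade of \emph{unleashed} peer particles produced when a left-mover from the right block meets a static particle from the left block, and this cascade is altered when triple collisions kill all three participants rather than releasing one moving peer. A careful case analysis should nonetheless preserve the superadditivity $N(1,l)\ge N(1,k)+N(k+1,l)$, after which the rest of the argument follows the template of Section~\ref{sec:topological} with only minor adjustments.
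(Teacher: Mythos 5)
Your proposal is correct and follows essentially the same route as the paper: for b) the same tail bound $\sigma \le \sigma_K + (\qh-\sigmat_K)^2$ is substituted into the dichotomy of Proposition~\ref{prop:dichotomy_Z}, and your monotone-function argument is equivalent to the paper's observation that the resulting quadratic in $\qh$ has a negative root; for a) the same topological scheme of Subsections~\ref{sec:topological} and~\ref{sec:characterization} is invoked. The only difference is that you spell out what the paper dismisses as adapting ``seamlessly'' --- notably the need to work on the region $U$ where the uniform separation $-1+\sqrt{1/p-\sigma_K(p)}<1$ holds, and the case-check that Lemma~\ref{lem:superadditivity} survives the triple-annihilation rule --- which is accurate and a welcome expansion of the paper's terse argument.
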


\begin{proof}
\mbox{\ref*{survival_Z})} The condition $p(4+\sigma_K)>1$ implies $p(4+\sigma)>1$, which in turn entails $-1+\sqrt{\frac1p-\sigma}<1$. In such cases, the previous proposition implies a dichotomy similar to that of the continuous case. The same continuity and connectivity arguments as in the continuous case then adapt seamlessly and give the conclusion.

\ref*{extinction_Z}) Since $\sigma\le \sigma_K+\qh-\sigmat_K$, the identity solved by $\qh$ in the proof of Proposition~\ref{prop:dichotomy_Z} yields, if $\qh<1$, 
\[0\le 2p\qh^2+(2-2\sigmat_K)p\qh+(\sigmat_K^2+\sigma_K)p-1,\]
and this polynomial in $\qh$ has a negative root, hence it is positive at all values larger than $\qh$ and in particular at $1$, which means that $(5+\sigma_K-2\sigmat_K+\sigmat_K^2)p-1>0$.
\end{proof}

\begin{corollary}
\begin{enumerate}
[a)]
	\item For all $p\ge1/4$, $\qh<q$, hence $\psi(p)>\theta(p)$;
	\item $0.2354<\pc[\hat]-\le\pc[\hat]+<0.2406$.
\end{enumerate}
\end{corollary}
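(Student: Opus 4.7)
My plan is as follows.

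For part (a), I would start from the dichotomy $\qh \in \{1,\, -1 + \sqrt{1/p - \sigma}\}$ given by Proposition~\ref{prop:dichotomy_Z}. The key elementary observation is that $\sigma > 0$ for all $p \in (0,1)$ when $m = \delta_1$, since already $\sigma \geq \sigma_1 = \Ph_\N(A=1)^2 = ((1-p)/2)^2 > 0$ from the single-particle contribution alone. At $p = 1/4$ this gives $p(4+\sigma_1) = 1 + 9/256 > 1$, and by continuity this strict inequality persists on some neighbourhood $(p_0,1)$ with $p_0 < 1/4$. Invoking part (a) of the preceding theorem with $K=1$ then yields $\psi(p) > 0$ on $(p_0,1)$, and in particular on $[1/4,1)$, which rules out the branch $\qh = 1$. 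Thus $\qh = -1 + \sqrt{1/p - \sigma}$, and $\sigma > 0$ gives the strict inequality $\qh < -1 + 1/\sqrt{p}$. The right-hand side equals $q$ for $p > 1/4$, and equals $1 = q$ at $p = 1/4$; either way $\qh < q$, and consequently $\psi(p) = (1-\qh)^2 > (1-q)^2 = \theta(p)$.

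For part (b), the plan is to exploit the bounds $r_K^- \leq \pc[\hat]- \leq \pc[\hat]+ \leq r_K^+$ from the preceding theorem. I would compute the polynomials $p \mapsto \Ph_\N(A=k)$ for $k$ up to a sufficiently large cutoff $K$, either by direct enumeration of the $3^k$ velocity patterns of the first $k$ particles (with triple collisions resolved by three-fold annihilation, as per the discretized rule) or by deriving a recurrence analogous to those in Section~\ref{sec:proof_universality}, adapted to the discrete geometry and to the alternative triple-collision rule. Summing then yields $\sigma_K$ and $\sigmat_K$ as explicit polynomials in $p$, after which the bounds $r_K^\pm$ are located by numerically isolating the relevant roots of $p(4+\sigma_K) - 1$ and $p(5 + \sigma_K - 2\sigmat_K + \sigmat_K^2) - 1$.

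The main obstacle is computational rather than conceptual. Low-order truncations are insufficient: one easily computes $\Ph_\N(A=1) = (1-p)/2$, $\Ph_\N(A=2) = 0$ and $\Ph_\N(A=3) = (1-p)^2(1+p)/8$, and these yield only $r_3^+ \approx 0.241$, just above the target $0.2406$. Reaching the claimed bounds requires pushing $K$ substantially further, where the delicate point is to set up a recurrence that correctly accounts for triple collisions in the discretized model without ballooning in complexity. Once the recurrence is in place, the remaining steps are symbolic computer-algebra manipulations followed by certified numerical root-finding of polynomials of high degree.
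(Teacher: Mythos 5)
Your proposal is correct and follows the same route as the paper: part (a) uses the survival criterion from the preceding theorem together with $\sigma_K>0$ to rule out $\qh=1$ and then compares $\qh=-1+\sqrt{1/p-\sigma}$ against $q=-1+1/\sqrt{p}$, and part (b) reduces to numerically isolating the roots of the polynomial bounds $r_K^\pm$ (the paper takes $K=25$). The only minor imprecision is that ``by continuity'' alone does not give $p(4+\sigma_1)>1$ on the whole of $(p_0,1)$ — you should also observe that $4p>1$ already for $p>1/4$, and then combine that with continuity near $p=1/4$ to obtain the required $p_0<1/4$; this is easily supplied and does not affect the argument.
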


\begin{proof}
a) It follows from part~\ref{survival_Z}) of the previous theorem and the fact that $\sigma_K>0$ that $\qh<1$. The comparison between $\qh$ and $q$ then follows from the formula for $\qh$ in Proposition~\ref{prop:dichotomy_Z}. 

b) These bounds follow by taking $K=25$ and evaluating and studying numerically the degree~$50$ polynomials $p\mapsto p(4+\sigma_{25})-1$ and $p\mapsto p(5+\sigma_{25}-2\sigmat_{25}+\sigmat_{25}^2)-1$. 
\end{proof}
\begin{remark}The explicit bounds derived above change quite slowly as $K$ is increased. However, the upper bound appears to change much more slowly than the lower, suggesting that if there is a single critical probability, its value is likely to be close to the upper bound given.
\end{remark}

\section*{Acknowledgements}
J.H.\ has been supported by the European Research Council (ERC) under
the European Union's Horizon 2020 research and innovation programme (grant 
agreement no.\ 639046) and by the UK Research and Innovation Future Leaders Fellowship MR/S016325/1, 
and is grateful to Agelos Georgakopoulos for several helpful discussions. 
L.T.\ was supported by the French ANR project MALIN (ANR-16-CE93-000). 

\bibliographystyle{acm}
\bibliography{biblio}

\end{document}